\newcommand{\tu}[1]{\textup{#1}}
\newcommand{\re}{\operatorname{Re}}
\newcommand{\im}{\operatorname{Im}}
\newcommand{\abs}[1]{\left|#1 \right|}
\newcommand{\norm}[1]{\left\Vert #1 \right\Vert}
\newcommand{\Abb}[4]{\left\{ \begin{array}{ccc}
                               #1 & \rightarrow &#2\\
			       #3 &\mapsto &#4
                               \end{array}\right.}
\newcommand{\Tr}{\operatorname{Tr}}
\newcommand{\D}{\mathcal{D}}
\newcommand{\N}{\mathbb{N}}
\newcommand{\R}{\mathbb R}
\newcommand{\Z}{\mathbb Z}
\newcommand{\C}{\mathbb{C}}
\theoremstyle{plain}
\newtheorem{Satz}{Satz}[section]  
\newtheorem{lem}[Satz]{Lemma}
\newtheorem{prop}[Satz]{Proposition}
\newtheorem{thm}[Satz]{Theorem}
\newtheorem{cor}[Satz]{Corollary}
\theoremstyle{definition}
\newtheorem{Def}[Satz]{Definition} 
\newtheorem{exmpl}[Satz]{Example}
\theoremstyle{remark}
\newtheorem{rem}[Satz]{Remark}
\numberwithin{equation}{section}  
\newcommand{\w}{{\mathbf w}}
\title{Symmetry reduction of holomorphic iterated function schemes and factorization of Selberg zeta functions}
\author[1]{David Borthwick}
\author[2]{Tobias Weich\thanks{weich@math.uni-paderborn.de}}
\affil[1]{Dept of Math/CS, Emory University Atlanta, GA 30322, USA}
\affil[2]{Institut für Mathematik, Universität Paderborn,33098 Paderborn, Germany}
\begin{document}

\maketitle

\begin{abstract}
Given a holomorphic iterated function scheme with a finite symmetry group $G$, we show that
the associated dynamical zeta function factorizes into symmetry-reduced analytic zeta functions
that are parametrized by the unitary irreducible representations of $G$. We show that this 
factorization implies a factorization of the Selberg zeta function on symmetric $n$-funneled
surfaces and that the symmetry factorization simplifies the numerical calculations of the 
resonances by several orders of magnitude. As an application this allows us to provide 
a detailed study of the spectral gap and we observe for the first time the existence
of a macroscopic spectral gap on Schottky surfaces.
\end{abstract}
\tableofcontents
\section{Introduction}
Let $X=\Gamma\backslash\mathbb H$ be a convex co-compact hyperbolic surface, 
with $\Delta_X$ the positive Laplacian on this surface. The resolvent, written in the form
\begin{equation}\label{eq:resolvent}
 R(s)=(\Delta_X-s(1-s))^{-1},
\end{equation}
is analytic as an operator on $L^2(X)$ for $s\in\C$ with $\re(s)>1$.  As an 
operator on weighted function spaces it can be continued meromorphically to $s\in\C$ with poles of 
finite rank \cite{MM87}. The poles of this meromorphic continuation are called the resonances
of $X$ and the multiplicity of a resonance is defined to be the rank of the 
associated pole. The set of all resonances on $X$, repeated according to 
multiplicity, will be called $\tu{Res}(X)$.  The resonance set is the spectral invariant
of the surface $X$ which generalizes the discrete eigenvalue spectrum of the
Laplacian on a compact manifold. 

Interest in the distribution of the resonances arises from different areas
of research.  First it is a natural mathematical question to understand the strength of the relationship between the 
geometry of the surface $X$ and the distribution of resonances. 
Second, the distribution of resonances on infinite volume hyperbolic 
surfaces has been found to have implications in arithmetics \cite{BGS11}. And 
third, the Laplace operator on convex co-compact surfaces is an important model for 
quantum-chaotic scattering, and the resonance distribution has been intensively
studied in theoretical \cite{ST04,LSZ03} and experimental \cite{BWPSKZ13, PWBKSZ12} 
physics during recent years. 

With motivation coming from these different directions, 
various results on the distribution of resonances on convex co-compact surfaces
have been obtained.  These include, for example, results on the asymptotic number of resonances in a disk in the 
complex plane \cite{GZ95,GZ97,Bor12}, results on upper and lower bounds of resonances in a 
strip near the critical line \cite{Zwo99, GLZ04, Per03, GZ99, Nau14} and about 
asymptotic spectral gaps \cite{Nau05,JN10} in the limit of large $\im(s)$. 
Despite these big advances, there are still many open conjectures on 
the distribution of the resonances, for example the fractal Weyl upper bound
is conjectured to be sharp \cite{GLZ04} and the asymptotic spectral gap is conjectured
to be much bigger then what is actually known \cite{JN12}. We refer to \cite{Non11} 
for a more detailed overview on recent results and open questions. 

In order to test these conjectures numerically, the first author recently presented a
detailed numerical study of the resonance structure on convex co-compact 
surfaces \cite{Bor14}.  Those calculations exploit the fact that the resonances appear
as zeros of the Selberg zeta function.  
This zeta function is defined for $\re(s)>1$ by 
\begin{equation}\label{ZX.def}
  Z_X(s):=\prod\limits_{\gamma\in \mathcal P_X}\prod\limits_{k\geq0} \left(1-e^{-(s+k)l(\gamma)}\right),
\end{equation}
where $\mathcal P_X$ is the set of primitive closed geodesics on $X$
(those geodesics that cannot be obtained by a repetition of a shorter closed geodesic) 
and $l(\gamma)$ denotes the length.  For convex co-compact surfaces
the Selberg zeta function is known to extend analytically to the complex plane \cite{Gui92}
and the relation to the resonances of $\Delta_X$ is given by the following:
\begin{thm}[\cite{PP01} Patterson-Perry 2001]\label{thm:PPcorresp}
 For a convex co-compact surface $X=\Gamma\backslash \mathbb H$ the zero set of the 
 zeta function $Z_X(s)$ is the union of the resonances $\tu{Res}(X)$ and the 
 negative integers $s=-k$, $k\in \mathbb N_0$. 
\end{thm}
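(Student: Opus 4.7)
My approach is to exhibit the Selberg zeta function as a natural spectral determinant associated to the meromorphically continued resolvent $R(s)$, and then to read off the stated zero/pole correspondence from this identification. The input from \cite{MM87} is that $R(s)$ continues meromorphically with poles precisely at the resonances and with residue ranks equal to the multiplicities; I will match these singularities against those of $Z_X'/Z_X$.

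The first step is to set up a $0$-regularized trace in the sense of Guillopé--Zworski for operators on the infinite-volume surface $X$, so that the resolvent difference $R(s)-R(s_0)$ --- which fails to be trace class on its own --- has a well-defined trace that inherits a simple pole of residue equal to the multiplicity at every $s_0\in\tu{Res}(X)$. The second step is to prove a Selberg-type trace formula tailored to the convex co-compact setting. Evaluated on the resolvent kernel, it should take the schematic shape
\begin{equation*}
 0\text{-}\Tr\bigl(R(s)-R(s_0)\bigr) \;=\; \frac{1}{2s-1}\,\frac{Z_X'(s)}{Z_X(s)} \;+\; H(s) \;-\; (\text{same at }s_0),
\end{equation*}
where the hyperbolic side of Selberg's formula reproduces $Z_X'/Z_X$ by logarithmic differentiation of the Euler product \eqref{ZX.def} over primitive closed geodesics, and $H(s)$ collects the identity and scattering contributions, expressible through digamma-type factors whose poles lie exactly at the non-positive integers.

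The third step is a pole-matching argument. On the left, the $0$-trace has a simple pole of residue $m$ at each $s_0\in\tu{Res}(X)$ of multiplicity $m$, and no other singularities. On the right, the logarithmic derivative $Z_X'/Z_X$ has simple poles of integer residue precisely at the zeros of $Z_X$, with the residue equal to the order of the zero, while $H(s)$ contributes poles of known orders at $s=-k$, $k\in\NZ$. Subtracting the $H$-contribution and comparing residues on both sides therefore forces the zero set of $Z_X$ to be $\tu{Res}(X)\cup\{-k:k\in\NZ\}$ with the claimed multiplicities; analyticity of $Z_X$ on $\C$ (due to \cite{Gui92}) rules out any other zeros.

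The main obstacle I anticipate is making the $0$-trace rigorous in a way that preserves multiplicities exactly, together with two local subtleties: the prefactor $2s-1$ in the trace formula threatens a spurious zero or cancellation at $s=\tfrac12$ that must be tracked carefully, and coincidences between resonances and trivial zeros at $s=-k$ require separating the ``dynamical'' and ``identity'' contributions so that multiplicities add rather than cancel. Handling the Eisenstein/scattering term in $H(s)$ for surfaces with funnels (but no cusps) should be manageable via the explicit form of the scattering matrix on $\mathbb H$, but checking that it contributes no extra zeros in $\C$ is the delicate accounting that underlies the clean statement of the theorem.
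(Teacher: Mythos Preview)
The paper does not prove this theorem; it is quoted as a result of Patterson--Perry \cite{PP01} and used as a black box to translate zeros of $Z_X$ into resonances of $\Delta_X$. So there is no ``paper's own proof'' to compare against here.

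That said, your outline is broadly in the spirit of how the result is actually established in the literature. Patterson--Perry compute the divisor of $Z_X$ by relating it, via a trace-formula identity, to the divisor of a relative scattering determinant and thence to the resolvent poles; the topological (``trivial'') zeros at $-k$ arise from the identity contribution and are weighted by the Euler characteristic $-\chi(X)(2k+1)$, a refinement your sketch does not capture. Your plan to use a $0$-regularized trace of $R(s)-R(s_0)$ is closer to the Guillop\'e--Zworski formulation than to \cite{PP01} directly, but the two are compatible. The genuine technical work you have not addressed is the construction and properties of the scattering operator on a convex co-compact surface (Hadamard regularization of the resolvent kernel at the boundary, functional equation for the scattering determinant), which is what makes the pole-matching rigorous; without that, the term $H(s)$ in your display is a placeholder rather than an identified object.
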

For an tractable numerical calculation of the Selberg zeta 
function, the correspondence of the Selberg zeta function and the 
dynamical zeta function of an iterated function scheme, the Bowen-Series map, has been used. 
The problem of analytic continuation can be circumvented by a trick which 
was introduced under the name cycle expansion in physics \cite{CE89} by
Cvitanovic-Eckhardt and which has later been rigorously applied to Selberg
zeta functions by Jenkinson-Pollicott \cite{JP02}. These techniques
allow the calculation of several thousand resonances on an ordinary personal 
computer and make it possible to study their distribution in the complex plane.  By this approach, in \cite{Bor14} 
resonance distributions were compared to the existent conjectures.  
Those investigations also revealed the striking formation
of resonance chains, which triggered further numerical \cite{phys_art, WBKPS14}
and mathematical \cite{wei14} studies.

The problem with the numerical techniques used so far is that, due to the 
exponential growth of number of closed geodesics, the convergence of 
the algorithm is restricted to rather narrow resonance strips near the 
critical line.   Additionally, only surfaces whose Schottky groups have two generators and 
for which the fractal dimension of the limit set $\delta$ is 
rather small ($0\leq\delta\lesssim 0.1$) could be treated \cite[Section 4.1]{Bor14}.
For a more thorough tests of the conjectures a larger $\delta$-range
would be desirable. Furthermore, recent predictions that the resonance 
chains observed for 3-funneled surface should be much less clear
for 4-funneled surfaces \cite{phys_art} can not be tested at all with the current
techniques. 

These shortcomings of the existent techniques motivated us to take advantage of
the symmetry of the convex co-compact surfaces and prove a symmetry
factorization for the dynamical zeta functions. Such factorizations have been 
calculated in physics in the closely related setting of $3$- and $4$-disk systems
by Cvitanovic and Eckhardt \cite{CE93}. The aim of this article is to establish
rigorous version of their results and apply them to the calculation of resonances
on convex co-compact surfaces.

If a convex co-compact surface $X=\Gamma\backslash\mathbb H$ has a finite 
symmetry group $G$, then the natural approach for a symmetry-reduced calculation
of the resonances 
would be to apply the symmetry reduction on the level of the Laplacian $\Delta_X$ 
and to study the meromorphic continuation of the symmetry-reduced resolvent. For 
the numerical calculation of the resonances we need, however, the Patterson-Perry
correspondence (Theorem~\ref{thm:PPcorresp}). The proof of a factorization of the 
Selberg zeta function thus would require to reprove this correspondence for
the symmetry-reduced resolvent, which seems rather technical. Therefore we
have chosen to prove the factorization on the level of the dynamical zeta
functions of iterated function schemes. This approach has the advantage  that the results
apply not only to Bowen-Series maps and convex co-compact surfaces
but also extend immediately to other cases where iterated function schemes appear,
e.g., in the calculation of Hausdorff dimensions \cite{JP02}. Additionally,
one automatically obtains the analyticity of the symmetry-reduced zeta functions
for free. 
The drawback of this approach is, however, that the symmetry group of the commonly used 
Bowen-Series maps might be smaller then the symmetry group of the 
associated surface. This problem can be circumvented for a large class 
of interesting surfaces as we will show in Section~\ref{sec:fac_sel_zeta}.

The article is organized as follows. In Section~\ref{sec:IFS} we will
first introduce the holomorphic iterated function schemes (IFS), their
transfer operators and the dynamical zeta functions. In Section~\ref{sec:SR_trace}
we will introduce the notion of a symmetry group of a holomorphic IFS and 
derive a symmetry-reduced trace formula for the transfer operator 
(Proposition \ref{prop:SymRedTrace}). This symmetry-reduced trace formula is then used
in Section~\ref{sec:facZeta} to prove, as a first main result, the 
factorization of the dynamical zeta function (Theorem~\ref{thm:factorizationZeta}).
The rest of the section is devoted to a simplification of the symmetry-reduced
zeta functions (Theorem~\ref{thm:factorizationZeta} and Corollary~\ref{cor:factorizationZeta})
which hold under the assumption that the symmetry group acts freely 
on the set of $G$-closed words. 

Section~\ref{sec:applications} is then devoted to the 
application of the results to the resonances on convex co-compact surfaces. 
In Section~\ref{sec:fac_sel_zeta} we first introduce a family of symmetric
$n$-funneled surfaces for which we construct iterated function schemes that 
incorporate the whole symmetry group of the surfaces. Using 
Theorem~\ref{thm:zeta_sym_red_first}, this leads to a factorization of the 
Selberg zeta function into analytic symmetry-reduced zeta functions (see equation 
(\ref{eq:fac_sel_zeta})).  Finally, in Section~\ref{sec:num_res}, we perform 
the numerical calculations using these new symmetry-reduced formulas. 
The symmetry reduction is interesting for theoretical reasons as 
it allows to associate the calculated resonances to particular unitary irreducible representations of the symmetry group.
We also demonstrate the tremendous practical value of the symmetry reduction as a means of simplifying the numerical calculations:
For a 3-funneled surface we show that we can increase the width of the 
numerically accessible resonance strip by a factor of three and at the same time reduce 
the number or required periodic orbits from over 170\,000 without symmetry reduction
to only 41 with symmetry reduction.  We are confident that this gain of efficiency 
will allow much more thorough numerical investigations of the resonance 
structure on convex co-compact surfaces.  As first examples of this, we confirm the 
prediction from \cite{phys_art} that the resonance structure of symmetric 
4-funneled surfaces show no clearly visible resonance chains.  We also provide
a detailed study of the spectral gap on Schottky surface and observe for the
first time the existence of a macroscopic spectral gap on these surfaces.

\medbreak
\emph{Acknowledgments:}  T.W. has been suported by the German Research Foundation
(DFG) via the grant DFG HI 412/12-1.  This work was initiated at the conference on
``Quantum chaos, resonances, and semiclassical measures'', Roscoff, France, June 2013, sponsored by a grant from 
the ANR.

\section{Holomorphic iterated function schemes and their transfer operators}
\label{sec:IFS}

\begin{Def}
A \emph{holomorphic iterated function scheme} (IFS) is defined on a set of $N$ open disks
$D_1,\dots,D_N\subset \C$ whose closures
 $\overline D_i$ are pairwise disjoint.  Associated to the IFS is a matrix $A\in \{0,1\}^{N\times N}$ 
 called the \emph{adjacency matrix}, which defines a relation $i\rightsquigarrow j$ if $A_{i,j}=1$.   It is assumed that
 for each pair $(i,j)\in \{1,\dots,N\}^2$ with $i\rightsquigarrow j$ we have a
 biholomorphic map $\phi_{i,j}:D_i\to \phi_{i,j}(D_i)\Subset D_j$.  The images are required to be pairwise disjoint in the sense that
 \begin{equation}\label{eq:separtaionCondition}
  \phi_{i,j}(D_i)\cap\phi_{k,l}(D_k)\neq\emptyset \quad\Longleftrightarrow\quad (i, j) = (k,l).
 \end{equation}
\end{Def}
For convenience we denote the union of all the disjoint disks by
\[
 D:=\bigcup_i D_i
\]
and the union of all their images by
\[
 \phi(D):=\bigcup\limits_{i\rightsquigarrow j} \phi_{i,j}(D_i).
\]
From (\ref{eq:separtaionCondition}) it follows directly that for $u\in\phi(D)$
there is exactly one pair $i\rightsquigarrow j$ and a unique $u'\in D_i$ such that $u=\phi_{i,j}
(u')$. We have thus a well defined holomorphic inverse function
\[
 \phi^{-1}:\phi(D)\to D.
\]

\begin{rem}
 Instead of disks $D_i$ one could have also taken simply connected domains 
 $U_i\subset \C$. Using the Riemann mapping theorem such an IFS is 
 biholomorphically conjugate to an IFS with disks, so one can always simplify 
 such an IFS to the above situation defined on disks. 
\end{rem}

\begin{exmpl}\label{exmpl:genSchottky}
Let $D_1,\dots,D_{2r}$ be disjoint open 
disks in $\C$ with centers on the real line and mutually disjoint 
closures. Then there exists for each
pair $D_i, D_{i+r}$ an element $S_i\in PSL(2,\R)$ that maps 
via its Moebius transformation $\partial D_i$ to 
$\partial D_{i+r}$ and that maps the interior of $D_i$ to the exterior of $D_{i+r}$.
The \emph{Schottky group} is then the free subgroup $\Gamma\subset PSL(2,\R)$,
generated by $S_1,\dots,S_r$ (for an illustration see Figure~\ref{fig:schottky_sketch}).  
The quotient $\Gamma\backslash\mathbb H$ is a hyperbolic surface with Euler characteristic $\chi = 1-r$, and any convex co-compact hyperbolic surface admits such a representation \cite{But98}.

The generators and disks in the construction of a Schottky group give also a natural
construction of a holomorphic IFS. For convenience we write $S_ {i+r}:=S_i^{-1}$ for $i=1,\dots,r$ 
and use a cyclic notation of the indices: $S_{i+2r}:=S_i$ and
$D_{i+2r}:=D_i$. Then for all $i=1,\dots,2r$ the element $S_i$ maps all disks 
except $D_i$ holomorphically into the interior of $D_{i+r}$. The
adjacency matrix of this IFS is thus the $2r\times 2r$ matrix with $A_{i,j}=0$ if 
$|i-j|= r$ and $A_{i,j}=1$ else. Furthermore for any $i\rightsquigarrow j$ we 
define the maps for $u\in D_i$ by
\[
 \phi_{i,j}(u):=S_{j+r}u=S_j^{-1} u \in D_j,
\]
and from this definition it is clear that (\ref{eq:separtaionCondition}) is 
automatically fulfilled. 

Note that the inverse map restricted to $D_j\cap \phi(D)$ is exactly given by $S_j$.
The IFS which we defined is consequently the inverse of the usual \emph{Bowen-Series
map} for Schottky groups (see e.g.\,\cite[Section 15.2]{Bor07}).
\end{exmpl}
\begin{figure}
\centering
        \includegraphics[width=\textwidth]{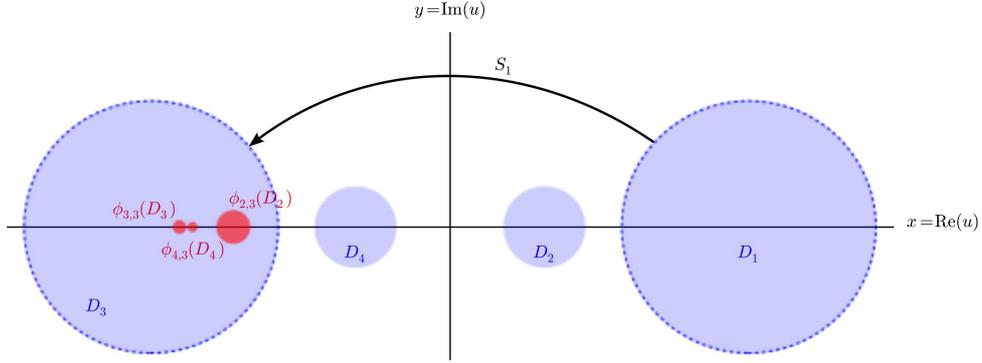}
\caption{Illustration of the construction of a Schottky group and the 
corresponding IFS. The blue disks show the four disks from which the generators
of the Schottky group are generated. For example the group element $S_1$ maps 
$\partial D_1$ to $\partial D_3$ and the exterior of $D_1$ into the interior of
$D_3$. The red circles illustrate the images of the other three disks under the
Moebius transformation $S_1$ which coincide with the images of the disks under 
the associates holomorphic IFS.}
\label{fig:schottky_sketch}
\end{figure}

Returning to the general case with $N$ disks, it will turn out to be useful for the notation to introduce the following symbolic
coding. The  \emph{symbols} are given by the integers $1,\dots,N$ and the set of words
of length $n$ is given by the tuples of symbols
\[
 \mathcal W_n:=\{(w_0,\dots,w_n):\> w_i\rightsquigarrow w_{i+1} \tu{ for all }i=0,\dots,n-1\}.
\]
Note that our notation of \emph{word length} does not refer to the number of symbols,
but to the number of transitions which they indicate. For $w\in \mathcal W_n$ and
$0<k\leq n$ we define the \emph{truncated word} by
\begin{equation}\label{trunc.def}
 w_{0,k}:=(w_0,\dots,w_k)\in\mathcal W_k.
\end{equation}
Finally we define the iteration of the maps $\phi_{i,j}$ along a word $w\in \mathcal W_n$ as
\[
 \phi_w:=\phi_{w_{n-1},w_n}\circ\ldots\circ\phi_{w_0,w_1}:D_{w_0}\mapsto D_{w_n}
\]
and their images as
\[
 D_w:=\phi_w(D_{w_0}).
\]
Note that $D_w\Subset D_{w_n}$ and that from the separation condition
(\ref{eq:separtaionCondition}) one obtains inductively for $w,w'\in \mathcal W_n$
\[
D_w\cap D_{w'}\neq\emptyset  \quad\Longleftrightarrow\quad w=w'.
\]
\begin{Def}
 We call a holomorphic IFS \emph{eventually contracting} if there is some $N\in\mathbb N$ 
 and $\theta<1$ such that for $n\geq N$
 \[
  |\phi_w'(u)|\leq\theta \tu{ for all }w\in\mathcal W_n \tu{ and } u\in D_{w_0}.
 \]
\end{Def}
\begin{rem}
 The Bowen-Series IFS as introduced in Example~\ref{exmpl:genSchottky} are known
 to be eventually contracting (see e.g.\,\cite[Proposition 15.4]{Bor07}).
\end{rem}
We say a word $w\in \mathcal W_n$ of length $n$ is \emph{closed} if $w_0=w_n$ and we
denote the set of all closed words of length $n$ by $\mathcal W_n^{cl}$.
\begin{lem}\label{lem:unique_fixpoint}
 If a holomorphic IFS is eventually contracting, then for each 
 $w\in \mathcal W_n^{cl}$ there exists a unique fixed point 
 $\phi_w(u_w)=u_w$.
\end{lem}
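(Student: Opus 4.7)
The plan is to reduce to the standard Banach fixed point theorem by taking a sufficiently high iterate of $\phi_w$.

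First I would observe that since $w$ is closed, $w_0 = w_n$, so $\phi_w : D_{w_0} \to D_{w_n} = D_{w_0}$; moreover $\phi_w(D_{w_0}) = D_w \Subset D_{w_0}$, so $\phi_w$ maps the closed disk $\overline{D_{w_0}}$ into the open disk $D_{w_0}$. Concatenating the closed word $w$ with itself $k$ times yields a closed word $w^k \in \mathcal{W}_{nk}^{cl}$ with $\phi_{w^k} = \phi_w^k$ (here I use that $w_n = w_0$ so the transitions match up). Iterating the inclusion above, $\phi_w^k$ also maps $\overline{D_{w_0}}$ into $D_{w_0}$ for every $k \geq 1$.

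Next I would invoke the eventual contraction hypothesis: choose $k$ large enough that $nk \geq N$, so that $|(\phi_w^k)'(u)| \leq \theta < 1$ for all $u \in D_{w_0}$. Since $D_{w_0}$ is a disk, hence convex, integrating the derivative bound along straight line segments gives the Lipschitz estimate
\[
 |\phi_w^k(u) - \phi_w^k(v)| \leq \theta\, |u - v| \qquad \text{for all } u,v \in \overline{D_{w_0}}.
\]
Thus $\phi_w^k$ is a strict contraction of the complete metric space $\overline{D_{w_0}}$ into itself, and the Banach fixed point theorem yields a unique fixed point $u_w \in \overline{D_{w_0}}$; in fact $u_w \in D_{w_0}$ since $\phi_w^k$ takes values in the open disk.

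Finally I would promote $u_w$ to a fixed point of $\phi_w$ itself by a standard commutation argument: $\phi_w^k(\phi_w(u_w)) = \phi_w(\phi_w^k(u_w)) = \phi_w(u_w)$, so $\phi_w(u_w)$ is also a fixed point of $\phi_w^k$; by the uniqueness just established, $\phi_w(u_w) = u_w$. Uniqueness for $\phi_w$ follows identically, since any fixed point of $\phi_w$ is automatically a fixed point of $\phi_w^k$. The only mildly delicate point is the passage from the pointwise derivative bound to a genuine Lipschitz contraction, which is why I want to record explicitly the convexity of the disks; everything else is a direct application of Banach's theorem.
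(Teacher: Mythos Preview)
Your argument is correct, with one small technical caveat: the maps $\phi_{i,j}$ are only assumed holomorphic on the \emph{open} disks $D_i$, so $\phi_w$ need not extend to $\overline{D_{w_0}}$. This is easily repaired by working on any closed Euclidean disk $\overline{B}$ with $\overline{D_w}\subset \overline{B}\subset D_{w_0}$: then $\phi_w(\overline{B})\subset \phi_w(D_{w_0})=D_w\subset \overline{B}$, the derivative bound $|(\phi_w^k)'|\le\theta$ still holds on $\overline{B}\subset D_{w_0}$, and convexity gives the Lipschitz contraction exactly as you wrote.

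The paper takes a slightly different but equivalent route: instead of passing through the Banach fixed point theorem, it forms the nested sequence $K_k:=(\phi_w)^k(D_{w_0})$ and uses the eventual contraction directly to show $\operatorname{diam}(K_k)\to 0$, so that $\bigcap_{k>0}K_k$ is a single point, which is then the unique fixed point. Your approach packages the shrinking into a Lipschitz constant and invokes an abstract theorem, at the cost of the extra derivative-to-Lipschitz step (and hence the need to track convexity); the paper's approach is more hands-on and sidesteps that issue entirely. Both are standard and neither is materially shorter or stronger here.
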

\begin{proof}
If $w\in \mathcal W_n^{cl}$ is closed, then $\phi_w(D_{w_0})=D_w\Subset D_{w_0}$ 
and we write $K_k:=(\phi_w)^k(D_{w_0})$. Then $K_{k+1}\Subset K_k$ and if 
$k_0 n\geq N$ then from the eventually contracting property 
$\tu{diam}(K_{k_0m})\leq \theta^m\tu{diam}(D_{w_0})$. Then $K_1,K_2,\ldots$ is a 
nested sequence of disks whose diameter converges to zero, so there is a 
unique $u_w:=\bigcap_{k>0}K_k$ which must be a fixed point of $\phi_w$. 
\end{proof}

Our next goal is to define the transfer operators associated to the iterated function schemes.
\begin{Def}[transfer operator]
 Let $\mathcal B(D):=\{f:D\to \C \tu{ holomorphic, and }f\in L^2(D)\}$ be the Bergmann
 space on $D$, where $D := \cup D_i$ for a holomoprhic IFS. 
 For $V:\phi(D) \to \C$ a bounded holomorphic function,
 we define the \emph{transfer operator} $\mathcal L_V:\mathcal B(D)\to\mathcal
 B(D)$ associated to the IFS by
\begin{equation}\label{LV.def}
  (\mathcal L_Vf)(u):=\sum\limits_{j:\> i\rightsquigarrow j} V(\phi_{i,j}(u)) f(\phi_{i,j}(u)),\quad \tu{ for }u\in D_i.
 \end{equation}
\end{Def}
Given such a potential $V$, a word $w\in \mathcal W_n$ and a point $u\in D_{w_0}$ we
can define the  iterated product
\begin{equation}\label{Vw.def}
 V_w(u):=\prod\limits_{k=1}^n V(\phi_{w_{0,k}}(u)),
\end{equation}
where $w_{0,k}$ is the truncation $(w_0,\dots,w_k)$ as defined in \eqref{trunc.def}.
A straight forward calculation of powers of the transfer operator $\mathcal L_V$ leads
to 
\[
 \left(\mathcal L_V^n f\right)(u)=\sum\limits_{w\in \mathcal W_n:\> u\in D_{w_0}}
           V_w(u)f(\phi_w(u));
\]
thus these iterated products naturally occur in powers of $\mathcal L_V$.

It is a well known fact that these transfer operators are trace class (see \cite{Rue76} 
for the original proof in slightly different function spaces
or \cite[Lemma 15.7]{Bor07} for a proof in our setting) and that the
trace can be expressed in terms of periodic orbits.  Accordingly one can define the
\emph{dynamical zeta function} by the Fredholm determinant 
\begin{equation}\label{eq:zeta_def}
 d_V(z):=\det(1-z\mathcal L_V)
\end{equation}
which is an entire function on $\C$. If furthermore the IFS is eventually contracting
the dynamical zeta function can be written for $|z|$ sufficiently
small as (see e.g.\,\cite[proof of Thm.~15.8]{Bor07}):
\begin{equation}\label{eq:zeta_fixpoint_formula}
 d_V(z)=\exp\left(-\sum\limits_{n>0} \frac{z^n}{n}\sum\limits_{w\in \mathcal W_n^{cl}} V_w(u_w)\frac{1}{1-(\phi_w)'(u_w)}\right).
\end{equation}
 
\begin{exmpl}\label{exmpl:TransferBowenSeries}
 An important class of transfer operators arises from the IFS associated to Bowen-Series
 maps of Schottky surfaces (see Example \ref{exmpl:genSchottky}). If we choose the 
 potential function $V_s(u)=[(\phi^{-1})'(u)]^{-s}$, which depends analytically on $s\in \C$, 
 then one obtains an analytic family of trace class
 operators $\mathcal L_s$.  The dynamical zeta function
\[
  d(s,z):=\det(1-z\mathcal L_s)
\]
is then analytic in $(s,z)\in \C^2$.  One has the important relation to the 
Selberg zeta function $Z_X$ for the Schottky surface $X$,
\[
 Z_X(s)=d(s,1),
\]
where $Z_X(s)$ was defined in \eqref{ZX.def} as a product over the primitive closed geodesics
(see e.g.\,\cite[Thm.~15.8]{Bor07} for a proof).
\end{exmpl}

\section{Trace formula for the symmetry-reduced transfer operator}\label{sec:sym}
\label{sec:SR_trace}

\begin{Def}\label{def:SymGroup}
 A \emph{symmetry group of a holomorphic IFS} is a finite group $G$ which
 acts holomorphically on $D$ and commutes with the IFS in the sense that for each $g\in G$, $u\in
 D_i$ and $i\rightsquigarrow j$, there exists a pair $k\rightsquigarrow l$ such that
 $g\phi_{i,j}(u)=\phi_{k,l}(gu)$. 
\end{Def}
As an immediate consequence of the definition we obtain that $\phi(D)\subset D$ is a
$G$-invariant subset. Furthermore, as the disks $D_i$ are disjoint and connected, we
have 
\begin{equation}\label{eq:gDi}
g(D_i)=D_j. 
\end{equation}
Thus we can reduce the $G$-action to the set of symbols $\{1,\dots,N\}$ by setting
$gi:=j$ for $i,j$ such that (\ref{eq:gDi}) holds. With this notation the indices $k,l$
in Definition \ref{def:SymGroup} are uniquely defined by $k=gi$ and $l=gj$. Accordingly
we conclude that $i\rightsquigarrow j$ implies $gi\rightsquigarrow gj$ and
consequently we can extend the $G$-action on the symbols to an action on the words of
length $n$ by setting for $w\in \mathcal W_n$
\[
 gw:=(gw_0,\dots,gw_n)\in \mathcal W_n.
\]
For the iterated maps $\phi_w$, the commutation formula reads
\begin{equation}\label{eq:commute_w}
  g\phi_w(u)=\phi_{gw}(gu) .
\end{equation}
For further use we can also introduce for $g\in G$ the set of \emph{$g$-closed words} of
length $n$
\begin{equation}\label{eq:g_closed_words}
 \mathcal W_n^g:=\{w\in \mathcal W_n, g w_n=w_0\}.
\end{equation}

\begin{exmpl}\label{exmpl:3funnelSchottky}
We have seen in Example \ref{exmpl:genSchottky} that Schottky groups naturally give
rise to holomorphic IFS. We will now consider the special case of 3-funneled surfaces. 
These surfaces are known to be uniquely 
parametrized by their Fenchel-Nielsen coordinates $l_1,l_2,l_3$ which determine the 
lengths of the three fundamental geodesics $\gamma_1,\gamma_2,\gamma_3$ (see 
Figure~\ref{fig:schottky_with_BS_cuts}).
\begin{figure}
\centering
        \includegraphics[scale=.6]{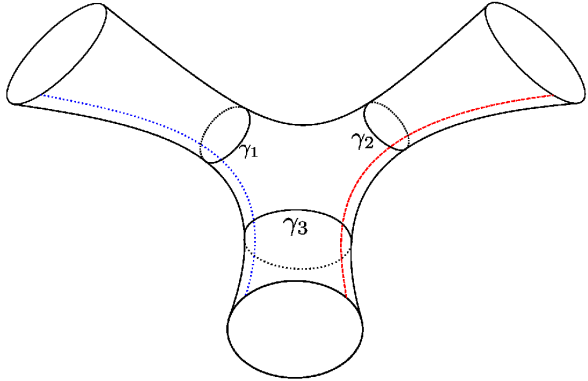}
\caption{Visualization of a Schottky surface with 3-funnels. These surfaces 
are uniquely determined by the lengths $l_1,l_2,l_3$ of the three fundamental
geodesics $\gamma_1,\gamma_2,\gamma_3$, that turn around each funnel. The surface 
can be obtained by gluing together the corresponding fundamental domain 
of the Schottky group
(see Figure~\ref{fig:fund_domain_with_sym}) along the dashed red and dotted
blue lines.
}
\label{fig:schottky_with_BS_cuts}
\end{figure}

Given three lengths $l_1,l_2,l_3$ we denote the associated Schottky surface by 
$X_{l_1,l_2,l_3}=\Gamma_{l_1,l_2,l_3}\backslash \mathbb H$.  The two 
generators of the Schottky group can be written in the form 
\[
 S_1=\left(\begin{array}{cc}
            \cosh(l_1/2)&\sinh(l_1/2)\\
            \sinh(l_1/2)&\cosh(l_1/2)
           \end{array}
\right),
~~~ S_2=\left(\begin{array}{cc}
            \cosh(l_2/2)&a\sinh(l_2/2)\\
            a^{-1}\sinh(l_2/2)&\cosh(l_2/2)
           \end{array}
\right), 
\]
where the parameter $a>0$ is chosen such that $\Tr(S_1S_2^{-1})=-2\cosh(l_3/2)$.

Depending on the choice of $l_1,l_2,l_3$ the associated Bowen-Series IFS have 
different symmetry groups. In any case the IFS has a $\Z_2$ symmetry generated
by the Moebius transformation of the matrix 
\[
\sigma_1=\left(\begin{array}{cc}
            -1&0\\
            0&1
           \end{array}\right).
\]
This transformation corresponds to a reflection at the imaginary axis followed by 
a complex conjugation\footnote{This complex conjugation is necessary to make the
symmetry holomorphic.} and it is related to the fact that all 3-funneled Schottky
surfaces are symmetric with respect to reflections on the plane spanned by the three 
funnels. 

The action of $\sigma_1$ interchanges disk $D_1$ with $D_3$ 
and $D_2$ which $D_4$, thus we get the following action on the symbols 
\[
 \sigma_1(1)=3,\quad\sigma_1(2)=4,\quad\sigma_1(3)=1,\quad\sigma_1(4)=2.
\]
In order to prove that $\sigma_1$ is indeed a symmetry of the 
Bowen-Series IFS in the sense of Definition~\ref{def:SymGroup} we have 
to verify
\begin{eqnarray*}
 \sigma_1 \phi_{1,1}\sigma_1=\phi_{3,3},\quad\sigma_1 \phi_{2,1}\sigma_1=\phi_{4,3},\quad \sigma_1 \phi_{4,1}\sigma_1=\phi_{2,3} \\
 \sigma_1 \phi_{1,2}\sigma_1=\phi_{3,4},\quad\sigma_1 \phi_{2,2}\sigma_1=\phi_{4,4},\quad \sigma_1 \phi_{3,2}\sigma_1=\phi_{1,4}.
\end{eqnarray*}
The first line follows from the fact that 
\[
 \sigma_1S_1\sigma_1= \left(\begin{array}{cc}
            \cosh(l_1/2)&-\sinh(l_1/2)\\
            -\sinh(l_1/2)&\cosh(l_1/2)
           \end{array}\right) 
           =S_1^{-1}=S_3
\]
and the second line analogously from $\sigma_1S_2\sigma_1=S_4$.
\begin{figure}
\centering
        \includegraphics[width=\textwidth]{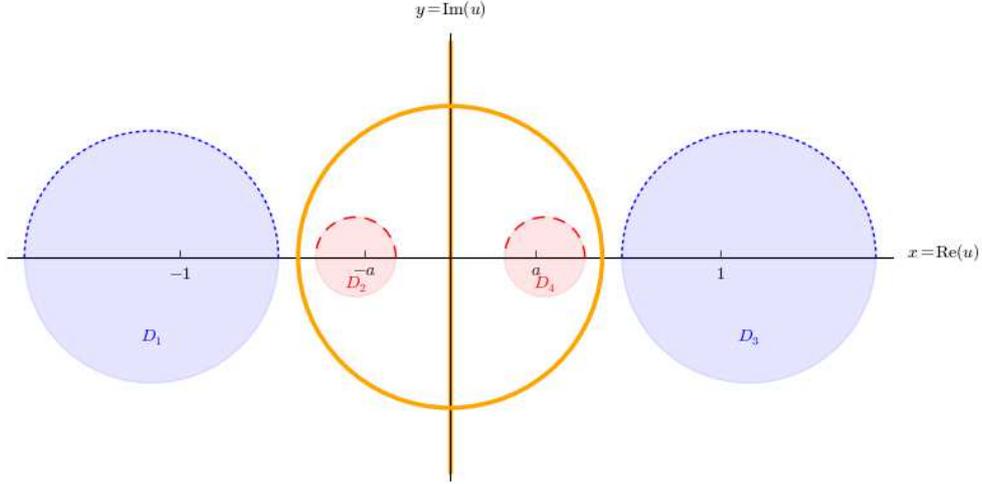}
\caption{Illustration of the Symmetry of the Bowen-Series IFS for a 3-funneled 
Schottky surface with $l_1=l_2$. Apart from the reflection along
the imaginary axis, the IFS is also symmetric w.r.t.~reflections along the 
yellow circle of radius $\sqrt{a}$. }
\label{fig:fund_domain_with_sym}
\end{figure}

If the Fenchel-Nielsen coordinates satisfy $l_1=l_2$ then both the 
Schottky surface $X_{l_1,l_1,l_3}$ and the Bowen-Series IFS admit an
additional symmetry. On the surface this symmetry would correspond to 
a rotation of $180^\circ$ around the third funnel. For the 
IFS this symmetry is represented by a Moebius transformation of
the matrix
\[
 \sigma_2=\left(\begin{array}{cc}
            0&\sqrt{a}\\
            \frac{1}{\sqrt{a}}&0
           \end{array}\right).
\]
This transformation represents a reflection at the orange circle in 
Figure~\ref{fig:fund_domain_with_sym} followed again by a complex
conjugation to restore the holomorphicity. As this transformation interchanges
$D_1$ with $D_2$ and $D_3$ with $D_4$ we obtain the action on the symbols
\[
 \sigma_2(1)=2,\quad\sigma_2(2)=1,\quad\sigma_2(3)=4,\quad\sigma_2(4)=3
\]
and according to Definition~\ref{def:SymGroup} we have to check
\begin{eqnarray*}
 \sigma_2 \phi_{1,1}\sigma_2=\phi_{2,2},\quad\sigma_2 \phi_{2,1}\sigma_2=\phi_{1,2},\quad \sigma_2 \phi_{4,1}\sigma_2=\phi_{3,2}\\
 \sigma_2 \phi_{2,3}\sigma_2=\phi_{1,4},\quad\sigma_2 \phi_{3,3}\sigma_2=\phi_{4,4},\quad \sigma_2 \phi_{4,3}\sigma_2=\phi_{3,4}.
\end{eqnarray*}
This is again verified by a simple matrix calculation that shows that 
$\sigma_2S_1\sigma_2=S_2$  and $\sigma_2S_3\sigma_2=S_4$. 
Since $\sigma_1$ and $\sigma_2$ commute, we conclude that
in the case $l_1=l_2$ the surface the holomorphic IFS have the Klein four-group as symmetry group.

If all three fundamental lengths are equal to each other, $l_1=l_2=l_3=l$, then the
Schottky surface $X_{l,l,l}$ has an even larger group as symmetry group which
can be written as $\D_3\times \Z_2$, with $\D_3$ being the symmetry group of the 
equilateral triangle (see Section~\ref{sec:fac_sel_zeta} for more details). 
The Bowen-Series IFS however does not exhibit these extra symmetries and still has only the Klein 
four-group as symmetry group. The reason for this discrepancy lies in the construction 
of the Bowen-Series IFS.  Morally, it corresponds to a Poincar\'e section which is defined 
by the blue dotted and red dashed cut-lines in Figure~\ref{fig:schottky_with_BS_cuts}. 
This asymmetric choice of a Poincar\'e section is the reason why the holomorphic IFS
has a weaker symmetry then the whole surface. 
To obtain the full symmetry decomposition of the zeta function we will have to construct a holomorphic IFS whose 
dynamical zeta function corresponds also to the Selberg zeta function but which 
incorporates the full symmetry group of the surface. This will be done for symmetric
n-funneled surfaces in Section~\ref{sec:fac_sel_zeta}.
\end{exmpl}

Given a symmetry group $G$ of a holomorphic IFS we now want to define the symmetry
decomposition of the function spaces $B(D)$. The symmetry group $G$ acts from left on
$B(D)$ by its left regular representation
\[
 (gf)(u)=f(g^{-1}u).
\]
Note that in general this action is not unitary if the scalar product in $B(D)$ is
taken with respect to the Lebesgue measure. However, by averaging the Lebesgue measure
$\lambda$ over $G$ with the pushforward $g_*\lambda$ one obtains a $G$-invariant measure 
\[
 \mu_G:=\frac{1}{|G|}\sum\limits_{g\in G} g_*\lambda= \frac{1}{|G|}\sum\limits_{g\in G} |g'(u)|^{-1}\lambda,
\]
which just modifies the Lebesgue measure by a positive, smooth density factor. We denote
the Bergman space with  the scalar product defined by $\mu_G$ with $B_G(D)$. This space
is identical to $B(D)$ as a set, but equipped with a different, topologically
equivalent scalar product. 

On $B_G(D)$ the left regular action of $G$ is unitary.  We
thus get a decomposition 
\begin{equation} \label{eq:decomp_BG}
B_G(D)=\bigoplus\limits_{\chi\in \hat G} B^\chi
\end{equation}
where $\hat G$ is the set of equivalence classes of unitary representations 
of $G$ and $B^\chi:=P_\chi B_G(D)$ with the orthogonal projection operator
\[
 P_\chi:=\frac{d_\chi}{|G|}\sum\limits_{g\in G}\overline{\chi(g)} g.
\]
Here $\chi$ is the character of the irreducible representation
of dimension $d_\chi$ and $g$ the
operator defined by the left regular representation. Note that
the definition of $P_\chi$ does not involve the scalar product, thus the operators
$P_\chi$ are equally projectors on $B(D)$ and we also get the decomposition of $B(D)$
in closed linear subspaces
\begin{equation} \label{eq:decomp_B}
B(D)=\bigoplus\limits_{\chi\in \hat G} B^\chi.
\end{equation}
The only difference to (\ref{eq:decomp_BG}) is that this decomposition is in general
not orthogonal anymore.

If the potential $V$ of the transfer operator is $G$-invariant, in the sense that
\begin{equation}\label{eq:V_G-inv}
 V(gu)=V(u),
\end{equation}
then $\mathcal L_V$ commutes with the left regular representation on $B(D)$ and
accordingly also with the projectors $P_\chi$. Consequently $\mathcal L_V$ leaves the
spaces $B^\chi$ invariant and we define the symmetry-reduced transfer operator to be
\begin{equation}\label{eq:symRedTransferOp}
  \mathcal L_V^\chi :={\mathcal L_V}\bigr|_{B^\chi}:B^\chi\to B^\chi.
\end{equation}
For this symmetry-reduced operator we obtain the following formula for its trace:
\begin{prop}\label{prop:SymRedTrace}
Let $G$ be the symmetry group of a holomorphic, eventually contracting IFS, with
$V:\phi(D)\to \C$ a holomorphic, 
bounded function which is symmetric with respect to the $G$-action and 
$\mathcal L_V$ the associated transfer operator.
Then for all $n\in \N$, ${(\mathcal L_V^\chi)^n}$ is trace
class and its trace is given by:
 \begin{equation}
  \Tr_{B^\chi}\left[(\mathcal L_V^\chi)^n\right]=\frac{d_\chi}{|G|}\sum\limits_{g\in G} \chi(g)\sum\limits_{w\in \mathcal W^g_n}\frac{V_w(gu_{w,g})}{1-(\phi_w\circ g)'(u_{w,g})},
 \end{equation}
 where $u_{w,g}$ is the unique fixed point satisfying 
 \begin{equation}
  u_{w,g}=\phi_w(g u_{w,g}),
 \end{equation}
 and $V_w$ is the iterated product
 \[
  V_w(u)=\prod\limits_{k=1}^n V(\phi_{w_{0,k}}(u)).
 \]
\end{prop}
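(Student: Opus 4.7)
The strategy is to use projection and cyclicity of the trace to reduce the computation to $\Tr_{\mathcal{B}(D)}\bigl[\mathcal{L}_V^n g\bigr]$ for each $g \in G$, and then to evaluate those traces by the same holomorphic fixed-point trace formula that underlies the cycle expansion \eqref{eq:zeta_fixpoint_formula}. The $G$-invariance \eqref{eq:V_G-inv} together with the equivariance \eqref{eq:commute_w} makes $\mathcal{L}_V$ commute with every $g\in G$ and hence with every projector $P_\chi$, so $(\mathcal{L}_V^\chi)^n$ is the restriction of the trace-class operator $\mathcal{L}_V^n$ to the closed invariant subspace $B^\chi$. In particular $(\mathcal{L}_V^\chi)^n$ is trace class and
\begin{equation}\label{eq:plan_step1}
\Tr_{B^\chi}\bigl[(\mathcal{L}_V^\chi)^n\bigr] \;=\; \Tr_{\mathcal{B}(D)}\bigl[P_\chi \mathcal{L}_V^n\bigr] \;=\; \frac{d_\chi}{|G|}\sum_{g\in G}\overline{\chi(g)}\,\Tr_{\mathcal{B}(D)}\bigl[\mathcal{L}_V^n g\bigr].
\end{equation}

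For fixed $g$ I decompose $\mathcal{B}(D)=\bigoplus_{i=1}^N \mathcal{B}(D_i)$. Using \eqref{LV.def} together with $(gf)(u)=f(g^{-1}u)$, a direct unfolding shows that the block of $\mathcal{L}_V^n g$ sending $\mathcal{B}(D_{w_0})$ into $\mathcal{B}(D_{g^{-1}w_n})$ is indexed by a single word $w\in \mathcal W_n$ and is the weighted composition operator $T_{w,g}f = V_w\cdot\bigl(f\circ (g^{-1}\phi_w)\bigr)$. Only blocks with $g^{-1}w_n = w_0$ are diagonal and contribute to the trace. For such a $w$, the inclusion $\phi_w(D_{w_0})\Subset D_{w_n}$ combined with $g^{-1}(D_{w_n})=D_{w_0}$ gives $g^{-1}\phi_w(D_{w_0})\Subset D_{w_0}$; the argument used in Lemma~\ref{lem:unique_fixpoint} then produces a unique fixed point $u^*_{w,g}\in D_{w_0}$ of $g^{-1}\phi_w$, and the holomorphic Atiyah--Bott--Ruelle trace formula on the Bergman space of a disk (the core ingredient of \cite[Lemma~15.7]{Bor07}, going back to \cite{Rue76}) yields
\begin{equation}\label{eq:plan_step2}
\Tr_{\mathcal{B}(D_{w_0})} T_{w,g} \;=\; \frac{V_w(u^*_{w,g})}{1-(g^{-1}\phi_w)'(u^*_{w,g})}.
\end{equation}

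Inserting \eqref{eq:plan_step2} into \eqref{eq:plan_step1} and substituting $g\mapsto g^{-1}$ in the outer sum converts the index set $\{w : g^{-1}w_n=w_0\}$ into $\mathcal{W}_n^g$ and, using $\overline{\chi(g^{-1})}=\chi(g)$ for unitary irreducible representations, turns the factor $\overline{\chi(g)}$ into $\chi(g)$. After this relabelling the fixed-point equation for $u^*_{w,g^{-1}}$ reads $u = g\phi_w(u)$; writing $u = g u_{w,g}$ reproduces the proposition's condition $u_{w,g}=\phi_w(gu_{w,g})$, and a chain-rule computation that uses $\phi_w(gu_{w,g})=u_{w,g}$ identifies
\[
(g\phi_w)'(gu_{w,g}) = g'(u_{w,g})\,\phi_w'(gu_{w,g}) = (\phi_w\circ g)'(u_{w,g}),
\]
so the denominator in \eqref{eq:plan_step2} matches that of the proposition and $V_w(u^*_{w,g^{-1}})=V_w(gu_{w,g})$ matches the numerator, finishing the derivation.

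The step that requires the most care is \eqref{eq:plan_step2}: one must verify that composing the contractive building block $\phi_w$ with a holomorphic but not necessarily contractive symmetry $g^{-1}$ still yields a nuclear operator on the Bergman space and that its trace is captured by the single fixed-point contribution. Since $g^{-1}\phi_w$ is holomorphic with $g^{-1}\phi_w(D_{w_0})\Subset D_{w_0}$, exactly the same proof as in the untwisted case applies; this verification is not conceptually novel but is the single nontrivial analytic ingredient on which the whole calculation rests.
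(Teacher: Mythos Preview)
Your argument is correct and follows essentially the same route as the paper: reduce to $\Tr_{\mathcal B(D)}[P_\chi \mathcal L_V^n]$, expand into blocks indexed by words, and apply the holomorphic fixed-point lemma \cite[Lemma~15.7]{Bor07} to each diagonal block. The only cosmetic difference is that the paper performs the reindexing $g\mapsto g^{-1}$ in $P_\chi$ at the outset, so the relevant composition is $\phi_w\circ g$ from the start and no final chain-rule matching of $(g\phi_w)'$ with $(\phi_w\circ g)'$ is needed.
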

\begin{proof}
This proposition is a direct consequence of \cite[Lemma 15.7]{Bor07}. First, we note that 
\[
 \Tr_{B^\chi}[(\mathcal L_V^\chi)^n]=\Tr_{B(D)}[P_\chi(\mathcal L_V)^n].
\]
Since $\overline{\chi(g)} = \chi(g^{-1})$, we can replace $g$ by $g^{-1}$ in the definition of $P_\chi$ and calculate that
\[
 (P_\chi(\mathcal L_V)^nf)(u) =
 \frac{d_\chi}{|G|}\sum\limits_{g\in G} \chi(g) \sum\limits_{w\in \mathcal W_n:\>gu\in D_{w_0}}V_w(gu)f(\phi_w(gu)).
\]
This implies that
\[
 \Tr_{B^\chi}[(\mathcal L_V^\chi)^n]=\frac{d_\chi}{|G|}\sum\limits_{g\in G} \chi(g)\sum\limits_{w\in \mathcal W_n:\>gu\in D_{w_0}}\Tr_{B(D)}\left[T_{V,w,g}\right]
\]
where $T_{V,w,g}$ is the following transfer operator
\[
 (T_{V,w,g} f)(u):= \left\{\begin{array}{ll}
                            V_w(gu)f(\phi_w\circ g(u))&\tu{ if }u\in D_{g^{-1}w_0}\\
                            0&\tu{ else}
                           \end{array}\right.
\]
The map $\phi_w\circ g$ is a biholomorphic function $\phi_w\circ g:D_{g^{-1}w_0}\to
D_{w}\Subset D_{w_n}$. If $w_n\neq g^{-1}w_0$, or in other words, if $w\notin \mathcal
W^g_n$, then the operator has trace zero as it is an isomorphism between two orthogonal
subsets of $B(D)$. Otherwise the eventually contracting property implies by the same 
arguments as in the proof of Lemma~\ref{lem:unique_fixpoint} that the map 
$\phi_w\circ g$ has a unique fixed point which we call $u_{w,g}$.
The operator $T_{V,w,g}$ then fulfills all the conditions of \cite[Lemma
15.7]{Bor07} and we obtain
\[
 \Tr_{B(D)}(T_{V,w,g})=\frac{V_w(gu_{w,g})}{1-(\phi_w\circ g)'(u_{w,g})}.
\]
\end{proof}
\section{Factorization of the zeta function}\label{sec:facZeta}
Proposition~\ref{prop:SymRedTrace} allows us to prove the following 
factorization of the dynamical zeta function. 
\begin{thm}\label{thm:zeta_sym_red_first}
 Let $G$ be the symmetry group of a holomorphic, eventually contracting IFS, let 
 $V:\phi(D)\to\C$ be a holomorphic, bounded $G$-invariant potential and 
 $d_V(z)$ the dynamical zeta function associated to the IFS and $V$. Then 
 the dynamical zeta function admits a factorization,
 \[
 d_V(z)=\prod\limits_{\chi\in \hat G}d_V^\chi(z),
\]
where the reduced zeta functions $d_V^\chi(z)$ can be expressed for 
sufficiently small $|z|$ by
\begin{equation}
 \label{eq:zeta_sym_red_first}
 d_V^\chi(z)=\exp\left(-\sum\limits_{n>0}\frac{z^n}{n}\frac{d_\chi}{|G|} \sum\limits_{g\in G} \chi(g)\sum\limits_{w\in \mathcal W^g_n}V_w(gu_{w,g})\sum\limits_{k\geq0}\left[(\phi_w\circ g)'(u_{w,g})\right]^k \right),
\end{equation}
and they extend analytically to $\C$.
\end{thm}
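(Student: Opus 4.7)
The plan is to define each reduced zeta function as a Fredholm determinant of the restricted transfer operator, establish the product formula via the $G$-invariant inner product, and then read off the explicit expression (\ref{eq:zeta_sym_red_first}) from the trace formula in Proposition~\ref{prop:SymRedTrace}.

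First I would set $d_V^\chi(z):=\det_{B^\chi}(1-z\mathcal{L}_V^\chi)$. Because $\mathcal{L}_V$ is trace class on $B(D)$ and commutes with every projector $P_\chi$, each $\mathcal{L}_V^\chi$ is trace class on $B^\chi$, so $d_V^\chi$ is automatically entire in $z$. To obtain the product formula I would pass from $B(D)$ to the Hilbert space $B_G(D)$: it coincides with $B(D)$ as a topological vector space, so the trace class ideal and the Fredholm determinant $\det(1-z\,\cdot)$ (which only depend on the eigenvalues counted with algebraic multiplicity) are the same on both spaces; but in $B_G(D)$ the decomposition (\ref{eq:decomp_BG}) is orthogonal and $\mathcal{L}_V$ is block-diagonal with respect to it. Standard properties of the Fredholm determinant over an orthogonal direct sum then give
\[
d_V(z)=\det(1-z\mathcal{L}_V)=\prod_{\chi\in\hat G}\det_{B^\chi}(1-z\mathcal{L}_V^\chi)=\prod_{\chi\in\hat G}d_V^\chi(z),
\]
with the product being finite since $G$ is finite.

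Next, for $|z|$ small enough that $\|z\mathcal{L}_V^\chi\|<1$, the standard expansion
\[
\log d_V^\chi(z)=-\sum_{n>0}\frac{z^n}{n}\,\Tr_{B^\chi}\!\bigl[(\mathcal{L}_V^\chi)^n\bigr]
\]
is valid. Plugging in the trace formula from Proposition~\ref{prop:SymRedTrace} and expanding the denominator as a geometric series,
\[
\frac{1}{1-(\phi_w\circ g)'(u_{w,g})}=\sum_{k\geq 0}\bigl[(\phi_w\circ g)'(u_{w,g})\bigr]^{k},
\]
reproduces exactly the right-hand side of (\ref{eq:zeta_sym_red_first}). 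The geometric series is legitimate because, as observed in the proof of Proposition~\ref{prop:SymRedTrace}, $\phi_w\circ g$ maps $D_{g^{-1}w_0}$ strictly into itself, so by the Schwarz lemma its derivative at the unique fixed point $u_{w,g}$ has modulus strictly less than one. The analytic extension of each $d_V^\chi$ to all of $\C$ is then automatic from the Fredholm determinant description.

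The only place where genuine care is needed is in the product formula, since the decomposition (\ref{eq:decomp_B}) of $B(D)$ with its original Lebesgue scalar product is not orthogonal, so one cannot directly invoke the block-diagonal factorization of the determinant. The clean way around this, adopted above, is to perform the factorization in the unitarized space $B_G(D)$ and to use that both the trace and the Fredholm determinant are intrinsic invariants of the operator, independent of which of the two equivalent Hilbert structures is placed on the underlying Bergman space.
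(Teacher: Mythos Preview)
Your proof is correct and follows essentially the same route as the paper: define $d_V^\chi$ as the Fredholm determinant of $\mathcal{L}_V^\chi$, obtain the product from the decomposition of $B(D)$, and read off (\ref{eq:zeta_sym_red_first}) by combining the $\log\det$ expansion with Proposition~\ref{prop:SymRedTrace} and a geometric series. You are in fact slightly more careful than the paper, which invokes the non-orthogonal decomposition (\ref{eq:decomp_B}) directly without comment, whereas you justify the factorization by passing to the unitarized space $B_G(D)$ and appealing to the invariance of the Fredholm determinant under equivalent Hilbert structures; you also supply the Schwarz-lemma justification for $|(\phi_w\circ g)'(u_{w,g})|<1$, which the paper omits.
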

\begin{proof}
As Proposition~\ref{prop:SymRedTrace} assures that $\mathcal L_V^\chi$ is 
trace class, we can define the symmetry-reduced zeta function
\begin{equation}
 d_V^\chi(z):={\det}_{B^\chi}(1-z\mathcal L_V^\chi)
\end{equation}
which is an analytic function on $\C$. From the symmetry decomposition
(\ref{eq:decomp_B}) of $B(D)$ into invariant subspaces $B^\chi$ we furthermore directly
obtain the following factorization of the dynamical zeta function
\[
 d_V(z)=\prod\limits_{\chi\in \hat G}d_V^\chi(z).
\]
Using the formula for the Fredholm determinant and the symmetry-reduced trace formula
we obtain
\begin{eqnarray*}
 d_V^\chi(z)&=&\exp\left( -\sum\limits_{n>0}\frac{z^n}{n}\Tr_{B^\chi}\left[(\mathcal L_V^\chi)^n\right]\right) \nonumber\\
 &=&\exp\left(-\sum\limits_{n>0}\frac{z^n}{n}\frac{d_\chi}{|G|} \sum\limits_{g\in G} \chi(g)\sum\limits_{w\in \mathcal W^g_n}\frac{V_w(gu_{w,g})}{1-(\phi_w\circ g)'(u_{w,g})} \right) 
\end{eqnarray*}
expanding the last fraction as a geometric series we obtain (\ref{eq:zeta_sym_red_first})
which finishes the proof. 
\end{proof}
From an abstract point of view this result is already completely satisfactory, as we have
obtained a factorization of the zeta function into reduced zeta functions which themselves
are again entire functions. This result is also sufficient to determine which zeros of 
the dynamical zeta function are related to eigenfunctions of $\mathcal L_V$ with a certain 
symmetry behavior. From a practical, computational point of view we will however see 
that (\ref{eq:zeta_sym_red_first}) is not yet optimal. In fact, we will show that 
the symmetry implies that many terms in the series appearing in (\ref{eq:zeta_sym_red_first})
are equal and can be grouped together, which speeds up practical computations considerably. 
Thus the rest of this section will be devoted to simplifying (\ref{eq:zeta_sym_red_first}) and
determining efficient formulas for $d_V^\chi(z)$.

For this purpose, we first have to study the symbolic dynamics more 
thoroughly and introduce some useful notation.
We first introduce the set of \emph{words with arbitrary length}
\[
\mathcal W:=\bigcup\limits_{n=1}^\infty \mathcal W_n 
\]
and denote for $w\in \mathcal W$ its \emph{word length} by $n_w$ such that $w\in \mathcal
W_{n_w}$. Similarly, we want to define the set of all 
words closed under an arbitrary group element. However, in 
(\ref{eq:zeta_sym_red_first}) the words appear always together with the
group element which closes them. If one word admits several closing 
group elements, then the same word will appear several times with all possible 
closing elements. It will therefore turn out to be convenient to consider pairs 
of words and closing group elements and we define
\[
 \mathcal W^G:=\left\{(w,g)\in \mathcal W\times G:\> g w_{n_w}=w_0\right\}.
\]
In order to shorten the notation we will denote these pairs of words and 
group elements by a bold $\mathbf w$. The group element of the pair $\mathbf w$
will be written as $g_{\mathbf w}$ and the word by a standard $w$ such that
$\mathbf w=(w,g_{\mathbf w})$.  The 
wordlength of $w$ will be written as $n_\w$.

As shown in the proof of Proposition~\ref{prop:SymRedTrace}, for any
$\mathbf w \in\mathcal W^G$ there exists a unique point $u_{\mathbf w}$ 
satisfying
\[
 \phi_w(g_\w u_\w)=u_\w,
\]
and we will call these points \emph{relative fixed points} in the sequel. 
The $G$-action on $\mathcal W_n$ can be extended to a $G$-action on 
$\mathcal W^G$ by taking the adjoint action on the $G$-part of $\mathcal W^G$:
for $h\in G$,
\begin{equation}\label{eq:g_action_on_WG}
 h\mathbf w:= (hw, hg_{\mathbf w}h^{-1}).
\end{equation}

In addition to the $G$-action on $\mathcal W^G$ we can also define the 
\emph{shift actions},
\begin{equation}\label{eq:shift}
\begin{split}
 \sigma_R \w &:=\big((g_\w w_{n-1},w_0,\dots,w_{n-1}),g_\w\big)\\
 \sigma_L \w &:=\big((w_1,\dots,w_n,g_\w^{-1}w_1),g_\w\big).
\end{split}
\end{equation}
Note that it would not be possible to define this action on the 
$g$-closed words directly, because the shift operation on the word depends explicitly 
on a choice of the closing group element.  
The importance of the shift action arises from the fact  that it is conjugated 
to the action  of the IFS on the relative fixed points $u_\w$. To be more 
precise, we have for every $\w\in\mathcal W^G$ that 
$u_{\sigma_L \w} =\phi_{g_\w^{-1}w_{0,1}}(u_\w)$, where $w_{0,1}$ denotes 
the truncated word $(w_0,w_1)$ as defined in \eqref{trunc.def}.  To see this, note that $g_{\sigma_L \w} = g_\w$ and 
that $(w_n, g_{\w}^{-1} w_1) = g_{\w}^{-1}w_{0,1}$, since $g_{\w}$ is a closing element for $w$.  
With these facts we simply calculate
\begin{eqnarray*}
\phi_{(w_1,\dots,w_n,g_\w^{-1}w_1)}(g_\w\phi_{g_\w^{-1}w_{0,1}}(u_\w))
 &=&\phi_{(w_1,\dots,w_n,g_\w^{-1}w_1)}{\phi_{w_{0,1}}(g_\w u_\w)} \\
 &=&\phi_{w_n, g_{\w}^{-1}w_1}\circ\phi_{w_{n-1},w_n} \circ\dots \circ\phi_{w_1,w_0}(g_\w u_\w)\\
 &=&\phi_{g_\w^{-1}w_{0,1}}(\phi_w(g_\w u_\w))\\
 &=&\phi_{g_\w^{-1}w_{0,1}}(u_\w).
\end{eqnarray*}

Finally, as $\sigma_R=\sigma_L^{-1}$, the shift action generates a 
$\Z$-action on the set of words $\mathcal W$ 
and the set of $G$-closed words $\mathcal W^G$.  Observe that
\begin{eqnarray*}
 \sigma_L h \sigma_R \w&=&\sigma_L h \big((g_\w w_{n-1},w_0,\dots, w_{n-1}),g_\w\big)\\
 &=&\sigma_L \big((hg_\w w_{n-1},hw_0,\dots, hw_{n-1}), hg_\w h^{-1}\big)\\
 &=&\big((hw_0,\dots, hg_\w^{-1}h^{-1}hg_\w w_n),hg_\w h^{-1}\big)\\
 &=&h\w,
\end{eqnarray*}
so the $G$-action and the $\Z$-action commute and we can consider the group 
$G\times \Z$ acting on $\mathcal W^G$. Thus we can consider the space of 
$G\times \Z$-orbits $(G\times \Z)\backslash\mathcal W^G$ and we will denote 
the orbit passing through $\w$ by 
\[
[\w]\in \left[\mathcal W^G\right]:=(G\times \Z)\backslash\mathcal W^G. 
\]

We next want to introduce the notion of composite and prime
elements in $\mathcal W^G$. 
Given $\w\in \mathcal W^G$ we can define its \emph{$k$-fold iteration} by
\begin{equation}\label{eq:def_wk}
 \w^k:=\big((g_\w^{k-1}w_0,\dots, g_\w^{k-1}w_n,g_\w^{k-2}w_1,\dots,  g_\w w_1,\dots, g_\w w_n,w_1,\dots, w_n), g_\w^k\big).
\end{equation}
By construction $\w^k\in\mathcal W^G$ and $n_{\w^k}=kn_\w$. Furthermore we 
calculate
\begin{equation}\label{eq:phi_wk}
\begin{split}
\phi_{w^k}(g_{\w^k} u_\w)&=(\phi_w\circ\dots\circ\phi_{g_\w^{k-1}w})(g_\w^k u_\w)\\
&=[(\phi_w\circ g_\w)\circ (\phi_w\circ g_\w)\circ \ldots\circ(\phi_w\circ g_\w)](u_\w)\\
&=u_\w,
\end{split}
\end{equation}
where the second last equality has been obtained by iteratively using the commutation
rule (\ref{eq:commute_w}). This implies that $u_{\w^k}=u_\w$.
\begin{Def}
All elements in $\mathcal W^G$ that are obtained by an iteration of a
shorter word are called \emph{composite}, all elements which can't be written as an
iteration of shorter elements are called \emph{prime}.
\end{Def}
\begin{lem}
 If $\w\in \mathcal W^G$ is a composite, respectively prime element then all 
 other elements in the $G\times \Z$-orbit are equally composite, respectively prime. 
\end{lem}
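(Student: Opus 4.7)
The plan is to show that both the $G$-action (\ref{eq:g_action_on_WG}) and the shift action (\ref{eq:shift}) send $k$-fold iterations to $k$-fold iterations of the same order. Concretely, I would establish the two equivariance identities
\begin{equation*}
h(\mathbf v^k) = (h\mathbf v)^k, \qquad \sigma_L(\mathbf v^k) = (\sigma_L \mathbf v)^k,
\end{equation*}
valid for every $h \in G$, $\mathbf v \in \mathcal W^G$ and $k \geq 1$. Once these are in hand, both ``composite'' and ``prime'' become $G\times\Z$-orbit invariants.

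The first identity follows by a direct unpacking of (\ref{eq:g_action_on_WG}) and (\ref{eq:def_wk}). Both sides have group component $h g_{\mathbf v}^k h^{-1} = (h g_{\mathbf v} h^{-1})^k$, and a block-by-block inspection of the word components reduces to the identity $h g_{\mathbf v}^{k-j} v_i = (h g_{\mathbf v} h^{-1})^{k-j}(h v_i)$, which in turn expresses the fact (implicit in Definition~\ref{def:SymGroup} and (\ref{eq:gDi})) that $G$ acts by automorphisms on the symbol set.

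The second identity is the only real computational step. Writing $\mathbf v = ((v_0, \dots, v_n), g)$ with $g v_n = v_0$, one has $\sigma_L \mathbf v = ((v_1, \dots, v_n, g^{-1} v_1), g)$, so the closing element is preserved. Expanding $(\sigma_L \mathbf v)^k$ via (\ref{eq:def_wk}) gives, for the $j$-th block ($j\geq 2$), entries $(g^{k-j} v_2, \dots, g^{k-j} v_n, g^{k-j-1} v_1)$; in parallel, applying $\sigma_L$ to $\mathbf v^k$ amounts to dropping the leading symbol $g^{k-1} v_0$ from the word (\ref{eq:def_wk}) and appending $g^{-k}(g^{k-1} v_1) = g^{-1} v_1$ at the end. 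A block-by-block comparison, using $g v_n = v_0$ to identify the boundary entries between consecutive blocks, shows that the two words agree. Since $\sigma_R = \sigma_L^{-1}$, the analogous identity for $\sigma_R$ is automatic.

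With both equivariances in hand, the lemma is immediate. If $\w = \mathbf v^k$ with $k \geq 2$ is composite, then for any $h \in G$ and $m \in \Z$ iterating the two identities yields
\begin{equation*}
h \sigma_L^m \w = h \sigma_L^m (\mathbf v^k) = (h \sigma_L^m \mathbf v)^k,
\end{equation*}
which is again a $k$-fold iteration and thus composite. For prime $\w$, the contrapositive applies: since the $G \times \Z$-action is invertible, any orbit containing a composite element would pull back to a composite $\w$. The main (and only) obstacle is the block-level bookkeeping for $\sigma_L(\mathbf v^k) = (\sigma_L \mathbf v)^k$; the rest is formal manipulation of the definitions.
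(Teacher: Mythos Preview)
Your proposal is correct and follows essentially the same approach as the paper: establish the equivariance identities $h(\mathbf v^k)=(h\mathbf v)^k$ and $\sigma_{L/R}(\mathbf v^k)=(\sigma_{L/R}\mathbf v)^k$, then conclude that being composite (hence also being prime) is a $G\times\Z$-orbit invariant. You actually supply more detail on the shift computation than the paper, which just asserts it with ``similarly one calculates''.
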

\begin{proof}
 As an element is either prime or composite, it suffices to show the statement for one
 case. Thus assume that $\tilde {\mathbf w}=\w^k$ for $k\geq 2$ is composite
 and consider
\begin{eqnarray*}
  h(\w^k)&=&\big((hg_\w^{k-1}w_0,\dots, hg_\w^{k-1}w_n, \dots, hw_1,\dots, hw_n),
  h g_\w^k h^{-1}\big)\\
        &=&\big(((hg_\w h^{-1})^{k-1}hw_0,\dots, (hg_\w h^{-1})^{k-1}hw_n, \dots, hw_1,\dots, hw_n), (hg_\w h^{-1})^k\big)\\
        &\underset{(\ref{eq:g_action_on_WG})}{=}&\big((g_{h\w}^{k-1}hw_0,\dots, g_{h\w}^{k-1}hw_n, \dots,  hw_1,\dots,  hw_n), g_{h\w}^k\big)\\
        &=&(h\w)^k.
 \end{eqnarray*}
Similarly one calculates $\sigma_{L/R}(\w^k)=(\sigma_{L/R}\w)^k$.
\end{proof}
The preceding lemma allows us to define the set of \emph{symmetry classes of 
$G$-closed prime orbits} as
\[
 \left[\mathcal W^G_{\tu{prime}}\right]:=\{[\w]\in (G\times\Z)\backslash \mathcal W^G, \w \tu{ is prime }\}.
\]
Having introduced all this notation we can go one step further towards the formulas for
the symmetry-reduced zeta functions by considering the terms $V_w(g_\w u_{\w})$ and
$(\phi_w\circ g_\w)'(u_{\w})$ appearing in the symmetry-reduced trace formula.
\begin{prop}\label{prop:dynamical_quantities}
 Let $[\w]\in\left[\mathcal W^G\right]$ be a $G\times \Z$-orbit. Then for all elements ${\mathbf v}\in [\w^k]$
(the $G\times\Z$-orbit of $\w^k$), we obtain
 \begin{equation}\label{eq:V_iterate}
   V_v(g_{\mathbf v} u_{\mathbf v})= [V_w(g_\w u_\w)]^k 
 \end{equation}
 and
 \begin{equation}\label{eq:derivative_iterate}
   (\phi_v\circ g_{\mathbf v})'(u_{\mathbf v})= \left[(\phi_w\circ g_\w)'(u_{\w})\right]^k. 
 \end{equation}
\end{prop}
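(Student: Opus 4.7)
The plan is to split the proof into two steps: first, establish the formulas for the specific representative $\w^k$; second, show that both quantities on the left-hand side are invariant under the $G\times\Z$-action on $\mathcal W^G$, so the equalities persist throughout the orbit $[\w^k]$.

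For the first step, the derivative formula is essentially contained in \eqref{eq:phi_wk}. The same iterative application of the commutation rule \eqref{eq:commute_w} that produced \eqref{eq:phi_wk} in fact yields the map identity $\phi_{w^k}\circ g_\w^k=(\phi_w\circ g_\w)^k$. Since $u_\w$ is a fixed point of $\phi_w\circ g_\w$, all intermediate iterates under $(\phi_w\circ g_\w)^j$ remain at $u_\w$, so the chain rule immediately yields \eqref{eq:derivative_iterate}. For \eqref{eq:V_iterate} I would decompose $w^k$ into the $k$ blocks of length $n_\w$ dictated by \eqref{eq:def_wk} and track the orbit of the initial point $g_\w^k u_\w$: a short induction using \eqref{eq:commute_w} shows that after the first $j$ blocks the point equals $g_\w^{k-j}u_\w$, and that within the $j$-th block the intermediate points are of the form $g_\w^{k-j}\phi_{w_{0,i}}(g_\w u_\w)$, so the $G$-invariance \eqref{eq:V_G-inv} collapses each block's contribution to $V_w(g_\w u_\w)$.

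For the second step I would verify invariance separately under the $G$-action and under the shift action. For $h\in G$, starting from $\phi_w(g_\w u_\w)=u_\w$ and applying \eqref{eq:commute_w} gives $u_{h\w}=hu_\w$ and $\phi_{hw}\circ g_{h\w}=h\circ(\phi_w\circ g_\w)\circ h^{-1}$; holomorphic conjugacy preserves the derivative at the corresponding fixed point, and $V_{hw}(g_{h\w}u_{h\w})=V_w(g_\w u_\w)$ follows from \eqref{eq:commute_w} combined with \eqref{eq:V_G-inv}. For the shift action, setting $\psi:=\phi_{w_n,g_\w^{-1}w_1}$, the identity $\phi_{w_0,w_1}=g_\w\circ\psi\circ g_\w^{-1}$ (a direct consequence of \eqref{eq:commute_w} using $g_\w w_n=w_0$) allows one to check $\phi_{\sigma_Lw}\circ g_{\sigma_L\w}=\psi\circ(\phi_w\circ g_\w)\circ\psi^{-1}$, where $u_{\sigma_L\w}=\psi(u_\w)$; holomorphic conjugacy again preserves the derivative at the fixed point. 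The corresponding invariance of $V_w(g_\w u_\w)$ follows from the cyclic rearrangement of the defining product, applying \eqref{eq:V_G-inv} one more time to absorb the single $g_\w^{-1}$ appearing at the wrap-around term $V(u_{\sigma_L\w})$.

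The main obstacle is the bookkeeping in the first step: the explicit form of $w^k$ in \eqref{eq:def_wk} and the behavior of its truncations $w^k_{0,i}$ must be traced carefully against the commutation rule before the block structure becomes transparent. Once this has been done, the remainder of the proof reduces to routine applications of \eqref{eq:commute_w}, \eqref{eq:V_G-inv}, and the chain rule at a fixed point.
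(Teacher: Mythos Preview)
Your proposal is correct and follows essentially the same approach as the paper: both split the argument into invariance under the $G\times\Z$-action and behavior under iteration, both relying on the commutation rule \eqref{eq:commute_w}, the $G$-invariance \eqref{eq:V_G-inv}, and the chain rule at the relative fixed point. The only differences are cosmetic: you treat the iteration step first while the paper treats invariance first, and your shift-invariance argument is packaged as a holomorphic conjugacy $\phi_{\sigma_Lw}\circ g_{\sigma_L\w}=\psi\circ(\phi_w\circ g_\w)\circ\psi^{-1}$ (which is indeed equivalent to the paper's identity $(\phi_{\sigma_L w}\circ g_{\sigma_L\w})(\phi_{g^{-1}_\w w_{0,1}}(u))=\phi_{g_\w^{-1}w_{0,1}}((\phi_w\circ g_\w)(u))$), whereas the paper writes out the $V$-computation term by term.
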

\begin{proof}
All calculations for this proof are basically straightforward, but for the reader's convenience we will include the details. 

For this proposition we have to prove two things. First, that the two quantities
are independent of the choice of representative in the $G\times \Z$-orbit and second, that
a $k$-fold iteration amounts simply to the $k$-th power of the quantities. Let's start
with the first point and take an arbitrary element $\w\in \mathcal W^G$ and $h\in G$. Then, starting from the definition \eqref{Vw.def},
\begin{eqnarray*}
  V_{hw}(g_{h\w} u_{h\w}) &=& \prod\limits_{k=1}^n V(\phi_{(hw)_{0,k}}(g_{h\w} u_{h\w}))\\
                        &\underset{(\ref{eq:g_action_on_WG})}{=}& \prod\limits_{k=1}^n V(\phi_{(hw)_{0,k}}((h g_\w h^{-1}) hu_\w))\\
                        &\underset{(\ref{eq:commute_w})}{=}& \prod\limits_{k=1}^n V(h\phi_{w_{0,k}}(g_\w u_\w))\\
                        &\underset{(\ref{eq:V_G-inv})}{=}& \prod\limits_{k=1}^n V(\phi_{w_{0,k}}(g_\w u_\w)).
 \end{eqnarray*}
In order to see the invariance under $\sigma_L$ we first recall that
$u_{\sigma_L\w}=\phi_{g_\w^{-1}w_{0,1}}(u_\w)$. Consequently
\begin{eqnarray*}
 V_{\sigma_L w}(g_{\sigma_L \w} u_{\sigma_L \w}) &=& \prod\limits_{k=1}^n V[\phi_{(\sigma_L w)_{0,k}}(g_\w \phi_{g_\w^{-1}w_{0,1}}(u_\w))]\\
                        &=& \prod\limits_{k=1}^n V[\phi_{(\sigma_L w)_{0,k}}(\phi_{w_{0,1}}(g_\w u_\w))]\\
                        &=& \left(\prod\limits_{k=1}^{n-1} V[\phi_{w_{0,k+1}}(g_\w u_\w)]\right)\cdot V(\phi_{(w_0,\dots,w_n,g_\w^{-1}w_1)}(g_\w u_\w))\\
                        &=& \left(\prod\limits_{k=1}^{n-1} V[\phi_{w_{0,k+1}}(g_\w u_\w)]\right)\cdot V(\phi_{(w_n,g_\w^{-1}w_1)}(\underbrace{\phi_w(g_\w u_\w))}_{=u_\w})\\
                        &\underset{(\ref{eq:V_G-inv})}{=}& \left(\prod\limits_{k=1}^{n-1} V[\phi_{w_{0,k+1}}(g_\w u_\w)]\right)\cdot V(g_\w\phi_{(w_n,g_\w^{-1}w_1)}(u_\w)\\
                        &\underset{(\ref{eq:commute_w})}{=}& \left(\prod\limits_{k=2}^{n} V[\phi_{w_{0,k}}(g_\w u_\w)]\right)\cdot V(\phi_{(w_0,w_1)}(g_\w u_\w)\\
                        &\underset{}{=}& \prod\limits_{k=1}^{n} V[\phi_{w_{0,k}}(g_\w u_\w)].
\end{eqnarray*}
With an analogous calculation one obtains the invariance under $\sigma_R$.

In order to see the invariance of $(\phi_w\circ g_\w)'(u_\w)$, we first consider for
arbitrary $u\in D_{g_\w^{-1}w_0}$ the equation,
\[
  (\phi_{hw}\circ g_{h\w})(hu) \underset{(\ref{eq:g_action_on_WG})}{=} \phi_{hw}( hg_{\w} h^{-1} hu)\underset{(\ref{eq:commute_w})}{=} h\phi_{w}( g_{\w} u).
\]
Differentiating both sides with respect to $u$ yields
\[
 h'(u)\cdot(\phi_{hw}\circ g_{h\w})'(hu)=h'(\phi_{w}( g_{\w} u))\cdot (\phi_{w}\circ g_{\w})'(u),
\]
and plugging in $u_\w$ shows the invariance because $\phi_{w}(g_{\w} u_\w)=u_\w$.
The invariance under the shift can be derived similarly by starting from the equation
\begin{eqnarray*}
(\phi_{\sigma_L w}\circ g_{\sigma_L\w})(\phi_{g^{-1}_\w w_{0,1}} (u))&\underset{(\ref{eq:shift})}{=}&\phi_{(w_1,\dots,w_n,g_\w^{-1} w_1)}(g_\w\phi_{g_\w^{-1} w_{0,1}} (u))\\
&\underset{(\ref{eq:commute_w})}{=}& \phi_{(w_1,\dots,w_n,g_\w^{-1} w_1)}\circ\phi_{w_0,w_1}(g_\w z)\\
&=&\phi_{g_\w^{-1}w_{0,1}}((\phi_w\circ g_\w)(u)).
\end{eqnarray*}
Again differentiating both sides and plugging in $u_\w$ yields the desired result. The
invariance under $\sigma_R$ follows analogously.

Having proved the $G\times \Z$-invariance it finally remains to show the behavior under
iterations. We calculate
\begin{eqnarray*}
 V_{w^k}(g_{\w^k}u_{\w^k})&=&V_{w^k}(g_\w^ku_\w)\\
 &=&\prod\limits_{l=1}^{kn_\w}V[\phi_{(w^k)_{0,l}}(g_\w^ku_\w)]\\
 &=&\prod\limits_{l=1}^{n_\w}V[\phi_{(g_\w^{k-1}w)_{0,l}}(g_\w^ku_\w)]\cdot
    \prod\limits_{l=1}^{n_\w}V[(\phi_{(g_\w^{k-2}w)_{0,l}}\circ \phi_{g_\w^{k-1}w})(g_\w^ku_\w))] \cdot 
    \ldots\cdot \\
  &&\prod\limits_{l=1}^{n_\w}V[\phi_{w_{0,l}}\circ\phi_{g_\w w}\circ\ldots\circ\phi_{g_\w^{k-1}w} (g_\w^ku_\w)].
\end{eqnarray*}
However each of these products becomes equal to $V_w(g_\w u_\w)$ after iteratively
commuting the $G$-action with the IFS by (\ref{eq:commute_w}). For example, the second
one becomes
\begin{eqnarray*}
 \prod\limits_{l=1}^{n_\w}V[(\phi_{(g_\w^{k-2}w)_{0,l}}\circ \phi_{g_\w^{k-1}w})(g_\w^ku_\w))]
 &\underset{(\ref{eq:commute_w})}{=}&\prod\limits_{l=1}^{n_\w}V[(\phi_{(g_\w^{k-2}w)_{0,l}}\circ g_\w^{k-1}\circ\phi_{w})(g_\w u_\w))]\\
 &\underset{(\ref{eq:commute_w})}{=}&\prod\limits_{l=1}^{n_\w}V[(g_\w^{k-2}\phi_{w_{0,l}}\circ g_\w\circ \phi_w)(g_\w u_\w))]\\
 &=&\prod\limits_{l=1}^{n_\w}V[\phi_{w_{0,l}}(g_\w u_\w)].
\end{eqnarray*}

For the iteration behavior of $(\phi_w\circ g_\w)'(u_\w)$, we calculate as in (\ref{eq:phi_wk}), 
\begin{eqnarray*}
 (\phi_{w^k}\circ g_{\w^k})(u)=(\phi_w\circ g_\w)\circ\ldots\circ(\phi_w\circ g_\w)(u).
\end{eqnarray*}
Again, differentiation of both sides w.r.t.~u and insertion of $u_{\w^k}=u_\w$ shows that
\[
 (\phi_{w^k}\circ g_{\w^k})'(u_{\w^k}) = [(\phi_w\circ g_\w)(u_{\w^k})]^k,
\]
which finishes the proof.
\end{proof}
The last result which we need for simplifying the symmetry-reduced zeta 
function is the following:
\begin{lem}\label{lem:elements_w}
 For $[\w]\in \left[\mathcal W_{\tu{prime}}^G\right]$ we denote by $\#[\w]$ 
 the number of elements of the $G\times \Z$-orbit in $\mathcal W^G$. If $G$
 acts freely on $\mathcal W^G$ then
 \[
  \#[\w]=|G|\cdot n_\w.
 \]
\end{lem}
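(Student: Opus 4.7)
My plan is to pin down the stabilizer of $\w$ for the combined action $(h,k)\cdot\w := h\,\sigma_L^k\w$ of $G\times\Z$ on $\mathcal{W}^G$ and then to deduce the orbit size by a coset count. I would first establish the basic identity $\sigma_L^{n_\w}\w = g_\w^{-1}\w$, which follows by iterating the definition \eqref{eq:shift} of $\sigma_L$ together with the closure relation $g_\w w_{n_\w}=w_0$. In particular $(g_\w,n_\w)$ always stabilizes $\w$. Since a set of left coset representatives of $\langle(g_\w,n_\w)\rangle$ in $G\times\Z$ is given by the $|G|\cdot n_\w$ pairs $(h,k)$ with $h\in G$ and $0\le k<n_\w$, the lemma reduces to showing that, under the assumptions, the stabilizer of $\w$ equals $\langle(g_\w,n_\w)\rangle$ exactly.

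So let $(h,m)$ stabilize $\w$. Writing $m=qn_\w+s$ with $0\le s<n_\w$ and multiplying by $(g_\w,n_\w)^{-q}$, which already lies in the stabilizer, I may reduce to an element of the form $(h',s)$, and the stabilization condition then reads $\sigma_L^s\w = h'\w$ for some $h'\in G$. If $s=0$ this becomes $h'\w=\w$, and freeness of the $G$-action on $\mathcal{W}^G$ forces $h'=e$, placing $(h,m)$ into $\langle(g_\w,n_\w)\rangle$. The case $0<s<n_\w$ is where the real work lies, and I would rule it out by producing an explicit prime factorization of $\w$, contradicting the primality hypothesis.

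Concretely, Bezout combinations of $(h',s)$ and $(g_\w,n_\w)$ in the stabilizer let me replace $s$ by $\gcd(s,n_\w)$, so I may assume $s\mid n_\w$ and set $p:=n_\w/s\ge 2$. Comparing $\sigma_L^s\w$ with $h'\w$ symbol by symbol yields the commutation $h' g_\w h'^{-1}=g_\w$ together with the shift relation $w_{i+s}=h' w_i$ for $0\le i\le n_\w-s$; iterating $\sigma_L^s\w=h'\w$ gives $\sigma_L^{ps}\w=h'^p\w$, and since $ps=n_\w$, combining with the orbit identity above produces $h'^p\w=g_\w^{-1}\w$, whence $h'^p=g_\w^{-1}$ in $G$ by freeness. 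I would then exhibit $\w$ as a $p$-fold iterate by defining $\mathbf{u}:=(u,g_{\mathbf{u}})\in\mathcal{W}^G$ with $u_i:=h'^{p-1}w_i$ for $0\le i\le s$ and $g_{\mathbf{u}}:=h'^{-1}$; direct substitution into \eqref{eq:def_wk} confirms $\mathbf{u}^p=\w$, since each symbol block matches via the identity $w_{i+js}=h'^j w_i$ and the closing elements agree because $g_{\mathbf{u}}^p=h'^{-p}=g_\w$. This contradicts the primality of $\w$, so $s=0$ is the only possibility and the stabilizer is identified.

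The expected main obstacle is the bookkeeping in the last step: the iteration formula \eqref{eq:def_wk} concentrates all $g_\w$-twists in the initial blocks of $\mathbf{u}^p$, so both the direction of the closing element $g_{\mathbf{u}}=h'^{-1}$ and the $G$-translate $u_i=h'^{p-1}w_i$ of the base word have to be chosen carefully for the blocks of $\mathbf{u}^p$ to reproduce the blocks of $w$ exactly. Once this decomposition is in place, everything else is a routine application of the orbit-stabilizer principle together with the freeness hypothesis.
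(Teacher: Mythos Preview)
Your proposal is correct and follows essentially the same route as the paper: both identify $(g_\w,n_\w)$ as a stabilizer element, reduce via orbit--stabilizer to showing the stabilizer is exactly $\langle(g_\w,n_\w)\rangle$, use freeness to handle the case $s=0$, and in the case $0<s<n_\w$ use a Bezout reduction to a divisor of $n_\w$ followed by an explicit factorization of $\w$ contradicting primality. Your construction of the factor $\mathbf{u}=\big((h'^{p-1}w_0,\dots,h'^{p-1}w_s),\,h'^{-1}\big)$ is a slightly cleaner variant of the paper's $\tilde\w=\big(g_\w^{-1}(w_0,\dots,w_c),\,h^a g_\w^{-b}\big)$, but the underlying idea is the same.
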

\begin{proof}
The $G$-orbit $[\w]$ can be written as the quotient $[\w]=(G\times\Z)/(G\times\Z)_\w$
where $(G\times\Z)_\w$ is the stabilizer of the element $\w\in \mathcal W^G$. 
So we can prove the lemma by studying the stabilizer $(G\times\Z)_\w$. 
For any element $\w\in\mathcal W^G$
we have that $g_\w\sigma_L^{n_\w} \w=\w$, so the group generated by $(g_\w,n_\w)$ is a
subset of the stabilizer group, i.e.
\begin{equation}\label{eq:stabilizer_subset}
 \langle (g_\w,n_\w)\rangle \subset (G\times \Z)_\w. 
\end{equation}
Note that there are exactly $|G|\cdot n_\w$ orbits of the right group action of
$\langle (g_\w,n_\w)\rangle$ on $G\times \Z$, so if in (\ref{eq:stabilizer_subset})
the equality holds, then $\#[\w]=|G|\cdot n_\w$. We have thus to show that for a
prime elment $\w$, the stabilizer is no bigger than 
$\langle (g_\w,n_\w)\rangle$. So we first assume that there is $h\in G$ such that
$(h,n_\w)\in (G\times\Z)_\w$, which means
\[
 h\sigma_L^{n_\w}\w=\w= g_\w\sigma_L^{n_\w} \w.
\]
From the assumption that $G$ acts freely on $\mathcal W^G$ 
we then obtain $h=g_w$.  

Next, we suppose that there is a $k\notin n_\w\Z$ and $h\in G$ such that $(h,k)\in(G\times\Z)_\w$. 
By adding or subtracting the elements $(g_\w,n_\w)$ we can assume, without loss 
of generality, that $0<k<n_\w$. By basic number theoretic arguments  there are 
integers $a,b\in \N$ such that $ak=bn_\w+c$ where $c$ is the greatest common 
divisor of $k$ and $n_\w$. Thus we can write
\[\begin{array}{rcll}
 h^a \sigma_L^{ak} \big((w_0,\ldots w_{n_\w}),g_\w\big)&= &\big((w_0,\ldots,w_{n_\w}),g_\w\big)&\Leftrightarrow\\
 \big(h^a  g_\w^{-b}(w_c,\ldots,w_{n_\w-1},g_\w^{-1}w_0,\ldots,g_\w^{-1}w_c),h^ag_\w h^{-a}\big)&=& \big((w_0,\ldots,w_{n_\w}),g_\w\big)&
\end{array}
 \]
Comparing the closing words we obtain 
\begin{equation}\label{eq:tmp_commute}
h^ag_\w =g_\w h^{a}. 
\end{equation}
Looking at the last $c$ entries of the word, we conclude that 
\begin{equation}\label{eq:tmp1}
 (w_{n_\w-c},\ldots, w_{n_\w}) = h^a g_\w^{-b-1} (w_0,\ldots,w_c).
\end{equation}
Inserting this back into the above equation, we iteratively conclude that
\begin{equation}\label{eq:tmp2}
 (w_{n_\w-rc},\ldots, w_{n_\w-(r-1)c}) = (h^a g_\w^{-b})^rg_\w^{-1} (w_0,\ldots,w_c).
\end{equation}
Additionally from (\ref{eq:tmp1}) we obtain $h^ag_\w^{-b} g_\w^{-1}w_c=w_{n_\w}=g_\w^{-1}w_0$, 
so $h^ag_\w^{-b}$ is a closing group element of the word $g_\w^{-1}(w_0,\ldots,w_c)$ and 
we can consider the pair 
\[
\tilde{\w}:=\big(g_\w^{-1}(w_0,\ldots,w_c),h^ag_\w^{-b}\big)\in\mathcal W^G.
\]
We set $t:=n_\w/c\in\N$ and calculate
\begin{eqnarray*}
(h^ag_\w^{-b})^t \tilde{\w} &=& \big((h^ag_\w^{-1})^t(w_0,\ldots,w_c),h^ag_\w^{-b}\big)\\
&\underset{(\ref{eq:tmp2})}{=}&\big((w_0,\ldots,w_c),h^ag_\w^{-b}\big)\\
&\underset{(\ref{eq:tmp_commute})}{=}&g_\w \tilde{\w}.
\end{eqnarray*}
So from the assumption that $G$ acts freely on $G$, we obtain $(h^ag_\w^{-b})^t=g_\w$. 
Putting everything together yields
\[
 \w = \tilde{\w}^t,
\]
which is in contradiction to the assumption that $\w$ is prime.
\end{proof}
We can now come back to the formula for the symmetry-reduced zeta function, and 
first consider the three sums
\[
  \sum\limits_{n>0} \sum\limits_{g\in G} \sum\limits_{w\in \mathcal W^g_n}
\]
which can be replaced by a sum over $\mathcal W^G$. In the domain of absolute convergence
we have
\[
  d_V^\chi(z)=\exp\left(-\sum\limits_{k\geq0} \sum\limits_{\w\in \mathcal W^G}\frac{z^{n_\w}}{n_\w}\frac{d_\chi}{|G|}  \chi(g_\w)V_w(g_\w u_\w)\left[(\phi_w\circ g_\w)'(u_\w)\right]^k \right).
\]
Note that $V_w(g_\w u_\w)\left[(\phi_w\circ g_\w)'(u_\w)\right]^k$ is invariant under the
$G\times \Z$-action by Proposition \ref{prop:dynamical_quantities}.  
For all $\mathbf v\in [\w]$ we have $g_{\mathbf v}=hg_\w h^{-1}$, 
so $\chi(g_\w)$ is also invariant under this
action. Furthermore, we know how $V_w(g_\w u_\w)\left[(\phi_w\circ g_\w)'(u_\w)\right]^k$ and
$g_\w$ behave under iteration so we can reduce the sum over $\mathcal W^G$ to
$\left[\mathcal W^G_{\tu{prime}}\right]$ and its iterates.  We get
\begin{equation}\label{eq:d_chi_exp}
 d_V^\chi(z)=\exp\left(-\sum\limits_{k\geq0} \sum\limits_{[\w]\in\left[ \mathcal  W^G_{\tu{prime}}\right]}\sum\limits_{l>0} \#[\w^l]\frac{z^{n_\w l}}{n_\w l}\frac{d_\chi}{|G|}  \chi(g_\w^l)\left(V_w(g_\w u_\w)\left[(\phi_w\circ g_\w)'(u_\w)\right]^k\right)^l \right).
\end{equation}
The character $\chi$ belongs to an irreducible unitary representation $\rho_\chi$ on
a finite dimensional vector space $V_\chi$, and we can write 
$\chi(g)=\Tr_{V_\chi}[\rho_\chi(g)]$. Thus we obtain
\begin{eqnarray*}
 d_V^\chi(z)&=&\exp\left(-d_\chi \sum\limits_{k\geq 0} \sum\limits_{[\w]\in\left[ \mathcal  W^G_{\tu{prime}}\right]}\sum\limits_{l>0} \frac{z^{n_\w l}}{l}  \Tr_{V_\chi} \left[\rho_\chi(g_\w)^l\right]\left(V_w(g_\w u_\w)\left[(\phi_w\circ g_\w)'(u_\w)\right]^k\right)^l \right)\\
 &=&\prod\limits_{k\geq 0} \prod \limits_{[\w]\in\left[ \mathcal  W^G_{\tu{prime}}\right]}\exp \left(-d_\chi \sum\limits_{l>0} \frac{\left(z^{n_\w} V_w(g_\w u_\w)\left[(\phi_w\circ g_\w)'(u_\w)\right]^k\right)^l}{l}  \Tr_{V_\chi} \left[\rho_\chi(g_\w)^l\right] \right)\\
 &=&\prod\limits_{k\geq 0} \prod \limits_{[\w]\in\left[ \mathcal  W^G_{\tu{prime}}\right]}\left({\det}_{V_\chi} \left[1-\left(z^{n_\w} V_w(g_\w u_\w)\left[(\phi_w\circ g_\w)'(u_\w)\right]^k\right)  \rho_\chi(g_\w)\right]\right)^{d_\chi}.
\end{eqnarray*}
These calculations have thus proven the following:
\begin{thm}\label{thm:factorizationZeta}
Let $G$ be the symmetry group of a holomorphic, eventually expanding IFS 
that acts freely on $\mathcal W^G$. Let $V:\phi(D)\to \C$ be a holomorphic, 
bounded function which is symmetric with 
respect to the $G$-action and $\mathcal L_V$ be the transfer operator 
associated to the holomorphic IFS and $V$. Let $\hat G$ be the set of all 
unitary irreducible representations of $G$ and $\chi:G\to\C$ the character 
of an irreducible representation $\rho_\chi: G\to GL(V_\chi)$ on 
the $d_\chi$-dimensional vector space $V_\chi$.Then the dynamical zeta function 
$d_V(z):=\det(1-z\mathcal L_V)$ factorizes according to
\begin{equation}
 d_V(z)=\prod\limits_{\chi\in \hat G} d_V^\chi(z)
\end{equation}
 and the symmetry-reduced zeta functions $d_V^\chi(z)$ are entire functions. 
 If $\mathcal L_V^\chi:B^\chi\to B^\chi$ is the symmetry
reduced transfer operator then they are defined by
$d_V^\chi(z):={\det}_{B^\chi}(1-z\mathcal L_V^\chi)$ and 
for $|z|$ sufficiently small they are given by
\begin{equation}\label{eq:sym_red_zeta}
  d_V^\chi(z) = \prod\limits_{k\geq 0} \prod \limits_{[\w]\in\left[ \mathcal  W^G_{\tu{prime}}\right]}\left({\det}_{V_\chi} \left[1-\left(z^{n_\w} V_w(g_\w u_\w)\left[(\phi_w\circ g_\w)'(u_\w)\right]^k\right)  \rho_\chi(g_\w)\right]\right)^{d_\chi}.
\end{equation}
\end{thm}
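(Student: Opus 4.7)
The plan is to take the formula \eqref{eq:zeta_sym_red_first} of Theorem \ref{thm:zeta_sym_red_first} as the starting point, since that theorem already supplies the factorization $d_V(z)=\prod_\chi d_V^\chi(z)$, the identification $d_V^\chi(z)={\det}_{B^\chi}(1-z\mathcal L_V^\chi)$, and the fact that each $d_V^\chi$ is entire. Only the combinatorial rewriting of \eqref{eq:zeta_sym_red_first} into the product \eqref{eq:sym_red_zeta} remains to be done. All manipulations will be performed in the domain of small $|z|$, where absolute convergence (which is guaranteed by the eventually contracting hypothesis, boundedness of $V$, and the resulting uniform bound on $|(\phi_w\circ g_\w)'(u_\w)|$ for sufficiently long words) allows free rearrangement of the sums.

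First I would collapse the triple sum $\sum_{n>0}\sum_{g\in G}\sum_{w\in\mathcal W_n^g}$ into a single sum over pairs $\w=(w,g_\w)\in\mathcal W^G$. Proposition \ref{prop:dynamical_quantities} shows that the dynamical ingredients $V_w(g_\w u_\w)$ and $(\phi_w\circ g_\w)'(u_\w)$ are invariant under the $G\times\Z$-action on $\mathcal W^G$, and $\chi(g_\w)$ is likewise invariant because $\chi$ is a class function and the $G$-action conjugates $g_\w$. I can therefore regroup the sum by $G\times\Z$-orbits. Every orbit is the orbit of an iterate $\w^l$ of a prime element $\w$, and Proposition \ref{prop:dynamical_quantities} tells me how these ingredients iterate. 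Lemma \ref{lem:elements_w}, via essentially the same stabilizer argument, gives $\#[\w^l]=|G|\cdot n_\w$ for every $l\ge 1$ (the stabilizer of $\w^l$ coincides with $\langle(g_\w,n_\w)\rangle$, not the a priori smaller $\langle(g_\w^l,n_\w l)\rangle$). With this counting, the orbit multiplicity together with the factor $1/(n_\w l)$ yields $|G|/l$, which cancels the $1/|G|$ in \eqref{eq:zeta_sym_red_first} and leaves an exponent of the form
\[
-d_\chi\sum_{k\ge 0}\sum_{[\w]\in[\mathcal W^G_{\tu{prime}}]}\sum_{l\ge 1}\frac{z^{n_\w l}}{l}\,\chi(g_\w^l)\bigl(V_w(g_\w u_\w)[(\phi_w\circ g_\w)'(u_\w)]^k\bigr)^l.
\]

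The final step is to rewrite $\chi(g_\w^l)=\Tr_{V_\chi}[\rho_\chi(g_\w)^l]$ and apply the identity $\sum_{l\ge 1}\frac{1}{l}\Tr_{V_\chi}[(zA\rho_\chi(g))^l]=-\log{\det}_{V_\chi}(1-zA\rho_\chi(g))$ with the scalar $A=V_w(g_\w u_\w)[(\phi_w\circ g_\w)'(u_\w)]^k$. Exponentiating converts the sum over $k$ and $[\w]$ into the double product, and the factor $d_\chi$ becomes the exponent on each determinant, producing precisely \eqref{eq:sym_red_zeta}. The main obstacle I expect is the bookkeeping in the orbit-counting step: one must verify that the stabilizer of a composite iterate $\w^l$ of a prime $\w$ is no larger than $\langle(g_\w,n_\w)\rangle$, so that $\#[\w^l]=|G|\,n_\w$ independently of $l$. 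Both the freeness of the $G$-action and the primeness of $\w$ are essential here; any error would introduce spurious $l$-dependent factors and destroy the clean product form. Once this orbit count is in hand, the rest is a matter of recognizing the characteristic polynomial of $\rho_\chi(g_\w)$ inside the log-sum identity.
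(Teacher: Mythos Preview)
Your proposal is correct and follows exactly the route the paper takes: collapse the triple sum into a sum over $\mathcal W^G$, regroup by $G\times\Z$-orbits using Proposition~\ref{prop:dynamical_quantities}, apply the orbit count from Lemma~\ref{lem:elements_w} to produce the exponent in \eqref{eq:d_chi_exp}, and then use $\chi(g_\w^l)=\Tr_{V_\chi}[\rho_\chi(g_\w)^l]$ together with the log--determinant identity to obtain the product \eqref{eq:sym_red_zeta}. You are in fact slightly more explicit than the paper about the one delicate point, namely that $\#[\w^l]=|G|\,n_\w$ for \emph{every} $l\ge 1$ (not just $l=1$), which the paper uses tacitly when passing from \eqref{eq:d_chi_exp} to the next display; your observation that the stabilizer of $\w^l$ is still $\langle(g_\w,n_\w)\rangle$ rather than the smaller $\langle(g_\w^l,n_\w l)\rangle$ is exactly what is needed there.
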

In (\ref{eq:sym_red_zeta}) the action of the group elements on $D\subset \C$
still appear explicitly. Using the following lemma
this equation can, however, be reformulated such that the precise
form of the $G$-action on $D$ does not show up anymore and the symmetry
reduction depends only on the $G$-action on the symbols.
\begin{lem}\label{lem:dynamical_quantities_wmw}
 Let $[\w]\in\left[\mathcal W_{\tu{prime}}^G\right]$ and let $m_\w\in \N$ be
 such that $g_\w^{m_\w}=\tu{Id}$ and that $g^k\neq \tu{Id}$ for all $0<k< m_w$. 
 Then $w^{m_\w}$ is a closed word. If we assume that 
 $(\phi_{w^{m_\w}})'(u_{w^{m_\w}})$ and $V_{w^{m_\w}}(u_{w^{m_\w}})$ are real 
 positive numbers, then we have
 \begin{eqnarray}
  (\phi_w\circ g_\w)'(u_\w)&=&\left[(\phi_{w^{m_\w}})'(u_{w^{m_\w}})\right]^{\frac{1}{m_\w}} \label{eq:stability_power}\\
  V_w(g_\w u_\w)&=&[V_{w^{m_\w}}(u_{w^{m_\w}})]^{\frac{1}{m_\w}} \label{eq:potential_power}.
 \end{eqnarray}
\end{lem}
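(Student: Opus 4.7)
My plan is to apply Proposition~\ref{prop:dynamical_quantities} to the iterate $\w^{m_\w}$, identify $\w^{m_\w}$ with an ordinary closed word, and then extract $m_\w$-th roots using the positivity assumption.

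First I would verify the opening assertion that $w^{m_\w}$ is closed. By the iteration formula \eqref{eq:def_wk}, the group component of $\w^{m_\w}$ is $g_\w^{m_\w} = \tu{Id}$; since membership in $\mathcal{W}^G$ requires $g_{\w^{m_\w}}\cdot (w^{m_\w})_{m_\w n_\w} = (w^{m_\w})_0$, this forces $(w^{m_\w})_{m_\w n_\w} = (w^{m_\w})_0$, i.e.\ $w^{m_\w}\in \mathcal W^{cl}_{m_\w n_\w}$. Moreover, the computation \eqref{eq:phi_wk} shows $\phi_{w^{m_\w}}(u_\w) = u_\w$, so by the uniqueness of fixed points (Lemma~\ref{lem:unique_fixpoint}) the ordinary fixed point $u_{w^{m_\w}}$ agrees with the relative fixed point $u_{\w^{m_\w}} = u_\w$.

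Next I would specialize the two identities \eqref{eq:V_iterate} and \eqref{eq:derivative_iterate} to the choice $\mathbf{v} = \w^{m_\w}$ inside the orbit $[\w^{m_\w}]$. Because $g_{\w^{m_\w}} = \tu{Id}$ collapses the composition $\phi_{w^{m_\w}}\circ g_{\w^{m_\w}}$ to $\phi_{w^{m_\w}}$ and identifies $u_{\w^{m_\w}}$ with $u_{w^{m_\w}}$, the two identities become
\begin{equation*}
(\phi_{w^{m_\w}})'(u_{w^{m_\w}}) = \bigl[(\phi_w\circ g_\w)'(u_\w)\bigr]^{m_\w}, \qquad V_{w^{m_\w}}(u_{w^{m_\w}}) = \bigl[V_w(g_\w u_\w)\bigr]^{m_\w}.
\end{equation*}
Under the hypothesis that both left-hand sides are positive real numbers, each right-hand side is an $m_\w$-th root of a positive real, and taking the real positive principal $m_\w$-th root yields \eqref{eq:stability_power} and \eqref{eq:potential_power}.

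The delicate point, and the one I expect to be the main obstacle to a fully airtight argument, is the choice of branch: a priori the complex numbers $(\phi_w\circ g_\w)'(u_\w)$ and $V_w(g_\w u_\w)$ are determined by the two displayed $m_\w$-th power identities only up to multiplication by an $m_\w$-th root of unity. To conclude that the principal real root is the correct one, one must invoke the specific structure of the intended applications (e.g.\ Möbius transformations arising from a Schottky group as in Example~\ref{exmpl:genSchottky}), in which $(\phi_w\circ g_\w)'(u_\w)$ is itself manifestly positive real at the attracting fixed point, and similarly $V_w(g_\w u_\w)$ is positive when $V = V_s$ is the potential from Example~\ref{exmpl:TransferBowenSeries} with real $s$.
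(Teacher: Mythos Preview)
Your proposal is correct and follows essentially the same route as the paper: the paper's proof is a two-line remark that the closedness of $w^{m_\w}$ follows from the definition \eqref{eq:def_wk} of iteration together with $g_\w^{m_\w}=\tu{Id}$, and that \eqref{eq:stability_power}--\eqref{eq:potential_power} follow from \eqref{eq:V_iterate}--\eqref{eq:derivative_iterate} of Proposition~\ref{prop:dynamical_quantities}. Your discussion of the branch ambiguity is a legitimate point that the paper leaves implicit; in the intended application (Section~\ref{sec:fac_sel_zeta}) the quantities $(\phi_w\circ g_\w)'(u_\w)$ are themselves positive real, which is what resolves it.
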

\begin{proof}
 The property that $w^{m_\w}$ is a closed word directly follows from the
 definition (\ref{eq:def_wk}) of $w^k$ and the definition of $m_\w$ and 
 (\ref{eq:stability_power}) and (\ref{eq:potential_power}) from 
 (\ref{eq:V_iterate}) and (\ref{eq:derivative_iterate}). 
\end{proof}
By substituting (\ref{eq:stability_power}) and (\ref{eq:potential_power}) in 
(\ref{eq:sym_red_zeta}), we derive the following:
\begin{cor}\label{cor:factorizationZeta}
Under the same conditions and with the same notation as in Theorem~\ref{thm:factorizationZeta}
and Lemma~\ref{lem:dynamical_quantities_wmw},
\begin{equation}\label{eq:RedSelbergZeta}
 d_V^\chi(z) = \prod\limits_{k\geq 0} \prod \limits_{[w]\in\left[ \mathcal  W^G_{\tu{prime}}\right]}\left({\det}_{V_\chi} \left[1-z^{n_\w} \left[V_{w^{m_\w}}(u_\w)((\phi_{w^{m_\w}})'(u_\w))^k\right]^{\frac{1}{m_\w}}  \rho_\chi(g_\w)\right]\right)^{d_\chi}
\end{equation}
\end{cor}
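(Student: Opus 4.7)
The plan is direct: treat Corollary~\ref{cor:factorizationZeta} as a substitution of the two identities from Lemma~\ref{lem:dynamical_quantities_wmw} into the factorization formula (\ref{eq:sym_red_zeta}) established in Theorem~\ref{thm:factorizationZeta}. All the genuinely new content is already available, so what remains is essentially bookkeeping, and no additional analytic estimates are required.

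First I would fix a representative $\w$ of each orbit $[\w] \in \left[\mathcal{W}^G_{\tu{prime}}\right]$ and let $m_\w$ denote the order of $g_\w$ in $G$. Since $g_\w^{m_\w} = \tu{Id}$, the iterate $\w^{m_\w} = (w^{m_\w}, \tu{Id})$ corresponds to a genuinely closed word of length $m_\w n_\w$, to which Lemma~\ref{lem:unique_fixpoint} associates an ordinary fixed point. From the calculation (\ref{eq:phi_wk}) already performed in preparation for the definition of prime elements, this fixed point coincides with the relative fixed point $u_\w$, which makes the identification $u_{w^{m_\w}} = u_\w$ used in the corollary's notation unambiguous.

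Next I would invoke Lemma~\ref{lem:dynamical_quantities_wmw}, which under the stated positivity hypothesis on $(\phi_{w^{m_\w}})'(u_\w)$ and $V_{w^{m_\w}}(u_\w)$ yields
\[
 V_w(g_\w u_\w) = \left[V_{w^{m_\w}}(u_\w)\right]^{1/m_\w}, \qquad (\phi_w \circ g_\w)'(u_\w) = \left[(\phi_{w^{m_\w}})'(u_\w)\right]^{1/m_\w},
\]
with the two $m_\w$-th roots unambiguously defined as positive real numbers. Plugging these into each factor of (\ref{eq:sym_red_zeta}) replaces the quantity $V_w(g_\w u_\w)\left[(\phi_w\circ g_\w)'(u_\w)\right]^k$ by $\left[V_{w^{m_\w}}(u_\w)\left((\phi_{w^{m_\w}})'(u_\w)\right)^k\right]^{1/m_\w}$, producing the determinant that appears in (\ref{eq:RedSelbergZeta}). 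Raising to the power $d_\chi$ and taking the product over $k\geq 0$ and $[\w] \in \left[\mathcal{W}^G_{\tu{prime}}\right]$ concludes the derivation.

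Since every step is purely algebraic, there is no real obstacle; the only point that needs to be flagged is that the positivity assumption is precisely what pins down the branch of the $m_\w$-th root and ensures that the right-hand side of (\ref{eq:RedSelbergZeta}) is a single-valued expression rather than multivalued. For the applications to Bowen-Series IFS on Schottky surfaces this positivity has to be verified separately (real expanding orbits on the real line), but that check is outside the scope of the corollary itself.
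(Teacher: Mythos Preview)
Your proposal is correct and follows exactly the paper's own derivation: the corollary is stated immediately after Lemma~\ref{lem:dynamical_quantities_wmw} with the one-line justification that one substitutes (\ref{eq:stability_power}) and (\ref{eq:potential_power}) into (\ref{eq:sym_red_zeta}). Your additional remarks about the fixed-point identification $u_{w^{m_\w}}=u_\w$ and the role of the positivity hypothesis in selecting the root are accurate elaborations but not required beyond what the paper already records.
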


\section{Application to Selberg zeta functions}
\label{sec:applications}

In this section the goal is to apply the results of Section~\ref{sec:facZeta} in 
order to obtain factorizations of the Selberg zeta functions associated 
to Schottky surfaces. Our main interest is in the symmetric $3$-funneled 
Schottky surfaces which were presented in Example~\ref{exmpl:3funnelSchottky}.
However, as pointed out in that example, the symmetry group of the standard
Bowen-Series IFS is much smaller than the symmetry group of the surface, thus
if one wants to obtain a full factorization of the Selberg zeta function 
one has to work with an alternative holomorphic IFS which incorporates the
whole symmetry of the surface. Such an IFS has been introduced for
3-funneled surfaces in \cite{wei14} under the name flow-adapted IFS. 
The idea behind this flow-adapted IFS, however, easily generalizes to certain $n$-funneled surfaces
of genus zero. In Section~\ref{sec:fac_sel_zeta} we will first introduce the symmetric $n$-funneled surfaces and 
the associated flow-adapted IFS. Then we will use Theorem~\ref{thm:factorizationZeta}
in order to obtain a factorization of the Selberg zeta function for these cases. 
In Section~\ref{sec:num_res} we will illustrate that this factorization yields an enormous speed-up in the calculation
of the resonances of the Laplacian.  In particular we are able to calculate for the first time  
the resonance structure on surfaces which were numerically not treatable previously,
 such as 4-funneled surfaces or weakly open surfaces with ``thick'' trapped sets, 
i.e.~surfaces where the fractal dimension of the limit set $\delta >0.5$. In 
Section~\ref{sec:gap} we will use the advantages of the symmetry factorization 
in order to present a detailed study of the spectral gap on Schottky surfaces. 

\subsection{Factorization of Selberg zeta functions for symmetric $n$-funneled Schottky surfaces}
\label{sec:fac_sel_zeta}

As mentioned in 
Example~\ref{exmpl:3funnelSchottky}, the 3-funneled Schottky surfaces 
of genus zero are
uniquely determined by the three funnel-widths $l_1,l_2,l_3$,
i.e.\,by the lengths of the three geodesics 
$\gamma_1,\gamma_2,\gamma_3$ (see Figure~\ref{fig:schottky_with_BS_cuts}).
The symmetric 3-funneled surfaces are thus uniquely determined by a single parameter
$l_1=l_2=l_3=l$.  For general $n$-funneled surfaces it is not true anymore that the 
surfaces are uniquely defined by the $n$ funnel-widths. Due to their 
nontrivial pants decomposition, additional lengths along which 
the pants are glued together as well as the twist angles appear in their
Fenchel-Nielsen coordinates.  These have to be taken into 
account in order to characterize them completely 
\cite[Section 13.3]{Bor07}.   The symmetric $n$-funneled surfaces which we will consider in this
section can, however, be easily defined as follows.

\begin{Def}\label{def:sym_n_funnel_surface}
 Let $n_f\geq 3$ and $0<\psi< 2\pi/n_f$.  Then on the Poincar\'e disk-model
 $\mathbb D$ we can define $n_f$ geodesics $\tilde c_1,\ldots ,\tilde c_{n_f}$ 
 by their start and end points (see Figure~\ref{fig:def_n-funnel})
 \[
\tilde a_j=e^{i(\pi(2j-1)/{n_f}-\pi-\psi/2)}\in\partial \mathbb D
 \tu{ and }\tilde b_j=e^{i(\pi(2j-1)/{n_f}-\pi+\psi/2)}\in\partial \mathbb D.
 \]
 Each of
 these geodesics $\tilde c_j$ cuts $\mathbb D$ into two half spaces and
 we denote the intersection of all those $j$ half spaces that contain
 $0\in\mathbb D$ by $\tilde{\mathcal S}$. The surface
 $X_{{n_f},\psi}$ is then the hyperbolic surface obtained by gluing 
 together two copies of $\tilde{\mathcal S}$ along the corresponding geodesic
 boundaries.
\end{Def}
\begin{figure}
\centering
        \includegraphics[width=\textwidth]{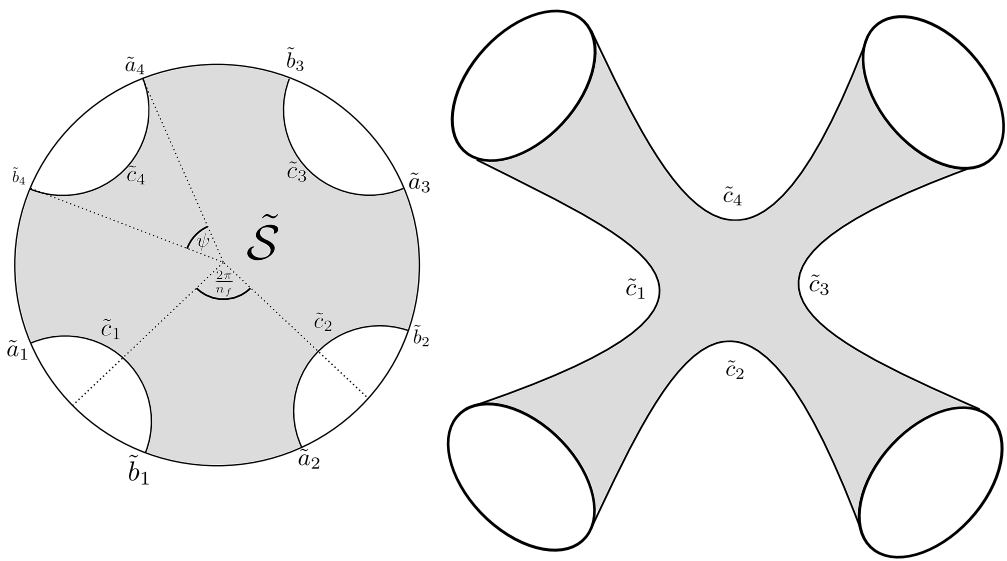}
\caption{Sketch of the construction of a 4-funneled symmetric surface 
defined in Definition~\ref{def:sym_n_funnel_surface}. On the left side
one can see the definition of the domain $\tilde S$ in the Poincar\'e disk
model. On the right side one can see a schematic sketch of the surface
that consists of two copies of $\tilde S$ glued together at the geodesic 
boundaries $\tilde c_i$.}
\label{fig:def_n-funnel}
\end{figure}
We will next explain how the surfaces $X_{{n_f},\psi}$ can be understood 
as Schottky surfaces in the sense of Example~\ref{exmpl:genSchottky}, 
and at the same time introduce the objects which are needed to define
the flow-adapted IFS. We therefore transform the circles 
$\tilde c_i$ and the domain $\tilde{\mathcal S}$
to the upper half plane $\mathbb H$ by the Cayley transform
\[
 C:\Abb{\mathbb C}{\mathbb H}{u}{-i\frac{u-1}{u+1}}
\]
and we obtain (see Figure~\ref{fig:half_plane_n-funnel}) 
$\mathcal S:=C(\tilde{\mathcal S})\subset \mathbb H$, 
$c_j:=C(\tilde c_j)$ as well as
\[
 a_j:=C(\tilde a_j) = \frac{\sin\big(\pi(2j-1)/{n_f}-\pi-\psi/2\big)}{1+\cos\big(\pi(2j-1)/{n_f}-\pi-\psi/2\big)}\in \partial \mathbb H
\]
and
\[
 b_j:=C(\tilde b_j) = \frac{\sin\big(\pi(2j-1)/{n_f}-\pi+\psi/2\big)}{1+\cos\big(\pi(2j-1)/{n_f}-\pi+\psi/2\big)}\in \partial \mathbb H.
\]
We will henceforth denote the Euclidean disks that are bounded 
by the geodesics $c_j$ by $D_j$, their centers by $m_j:=(b_j+a_j)/2$,
and their radii by $r_j:=(b_j-a_j)/2$. We can then define the matrices,
\[
 R_j:=\frac{1}{r_j}\left(\begin{array}{cc}
                           m_j&r_j^2-m_j^2\\
                           1&-m_j\\
                         \end{array}\right).
\]
These matrices have $\det(R_j)=-1$, and the associated M\"obius transformations,
\[
 R_j u= \frac{m_j u+r_j^2 -m_j^2}{u-m_j}=\frac{r_j^2}{u-m_j}+m_j,
\]
are holomorphic transformations on the Riemann sphere $\overline \C$ that correspond 
to a reflection at the boundary circle of $D_j$ followed by a complex conjugation. 

With these matrices we can now express the Schottky group associated to the 
surface $X_{{n_f},\psi}$.
\begin{lem}\label{lem:Schottky_of_sym_n_funnel}
With the notation from above let ${n_f}>3$ and $0<\psi<2\pi/{n_f}$. Then the finitely generated 
group $\Gamma_{{n_f},\psi}:=\langle R_{n_f}R_1,\ldots ,R_{n_f}R_{{n_f}-1}\rangle$ is a Schottky group and
$X_{{n_f},\psi}=\Gamma_{{n_f},\psi}\backslash \mathbb H$.
\end{lem}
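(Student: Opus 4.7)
The plan is to exhibit explicit Schottky data for $\Gamma_{n_f,\psi}$ and then to identify its quotient with $X_{n_f,\psi}$ via a double-cover argument. First I note that each $R_j$ is an orientation-reversing Möbius involution (reflection in the circle $c_j$ followed by complex conjugation), and since $\det(R_j)=-1$, every product $S_j:=R_{n_f}R_j$ lies in $PSL(2,\R)$; these $n_f-1$ elements will be the Schottky generators. The candidate $2(n_f-1)$ Schottky disks are
\[
 D_1,\dots,D_{n_f-1},\qquad D'_j:=R_{n_f}(D_j)\ \ (j=1,\dots,n_f-1).
\]

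The first main step is to verify that these $2(n_f-1)$ disks have pairwise disjoint closures and satisfy the Schottky ping-pong condition. The hypothesis $0<\psi<2\pi/n_f$ guarantees via an elementary geometric check using the explicit formulas for $a_j,b_j$ that $D_1,\dots,D_{n_f}$ have pairwise disjoint closures. Since $R_{n_f}$ swaps the interior and exterior of $D_{n_f}$ and each $D_j$ with $j\neq n_f$ lies in the exterior of $D_{n_f}$, one obtains $\overline{D'_j}\Subset D_{n_f}$, which separates the $D'_j$ from every $D_k$ with $k\le n_f-1$; injectivity of $R_{n_f}$ together with disjointness of the $D_j$ yields disjointness among the $D'_j$ themselves. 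For the action, $R_j$ sends the exterior of $D_j$ bijectively onto the interior of $D_j$, and then $R_{n_f}$ sends the interior of $D_j$ onto $D'_j$; thus $S_j$ maps the complement of $\overline{D_j}$ onto $D'_j$, which is exactly the Schottky generator condition (in the conventions of Example~\ref{exmpl:genSchottky}). By the ping-pong lemma, $\Gamma_{n_f,\psi}=\langle S_1,\dots,S_{n_f-1}\rangle$ is a free Schottky group of rank $n_f-1$.

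The second main step is to identify the quotient. Consider the larger group $\widetilde\Gamma:=\langle R_1,\dots,R_{n_f}\rangle$ generated by the reflections in the $n_f$ disjoint circles $c_j$. By a standard Poincaré polygon / Klein combination argument, whose hypotheses again reduce to the pairwise disjointness of the $D_j$, $\widetilde\Gamma$ is the free product of $n_f$ copies of $\Z/2$ and admits the region $\mathcal S=C(\widetilde{\mathcal S})$ bounded by the $c_j$ as a fundamental domain; hence $\widetilde\Gamma\backslash \mathbb H$ is the reflection orbifold obtained from $\mathcal S$ by declaring each $c_j$ a mirror. The orientation-preserving subgroup $\widetilde\Gamma^+\subset\widetilde\Gamma$ has index $2$, and the identity $R_iR_j=S_i^{-1}S_j$ (with $S_{n_f}:=\mathrm{Id}$) together with the fact that $\widetilde\Gamma^+$ is generated by the products $R_iR_j$ shows $\widetilde\Gamma^+=\Gamma_{n_f,\psi}$. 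Therefore $\Gamma_{n_f,\psi}\backslash\mathbb H$ is precisely the orientation double cover of the reflection orbifold $\widetilde\Gamma\backslash\mathbb H$, which by construction is obtained by gluing two copies of $\mathcal S$ along the corresponding geodesic boundaries $c_j$; transporting back by the Cayley map $C$, this is exactly the surface $X_{n_f,\psi}$ of Definition~\ref{def:sym_n_funnel_surface}.

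The main obstacle I anticipate is purely geometric: confirming rigorously, from the explicit trigonometric expressions for $a_j,b_j$, that the disks $D_1,\dots,D_{n_f}$ have pairwise disjoint closures precisely when $0<\psi<2\pi/n_f$. Once this disjointness is in hand, both the Schottky ping-pong verification and the identification of $\Gamma_{n_f,\psi}$ with $\widetilde\Gamma^+$ via the free-product structure are automatic, and the identification of quotients follows from the general principle that passing to the orientation-preserving index-$2$ subgroup corresponds to doubling the orbifold across its mirror boundary.
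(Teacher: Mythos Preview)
Your argument is correct. The Schottky verification in your first step is essentially identical to the paper's: both exhibit the $2(n_f-1)$ disks $D_1,\dots,D_{n_f-1}$ and $R_{n_f}(D_1),\dots,R_{n_f}(D_{n_f-1})$ and check that $S_j=R_{n_f}R_j$ sends the exterior of $D_j$ to the interior of $R_{n_f}(D_j)$.

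Where you diverge is in the identification of the quotient. The paper proceeds directly: it observes that the Schottky fundamental domain $\mathbb H\setminus\bigcup_j(D_j\cup R_{n_f}(D_j))$ equals $\mathcal S\cup R_{n_f}(\mathcal S)$, i.e.\ two copies of $\mathcal S$ already glued along $c_{n_f}$, and then reads off that the side-pairings $S_j$ glue $c_j$ to $R_{n_f}(c_j)$, reproducing Definition~\ref{def:sym_n_funnel_surface}. Your route is more structural: you introduce the full reflection group $\widetilde\Gamma=\langle R_1,\dots,R_{n_f}\rangle$, identify $\Gamma_{n_f,\psi}$ with its orientation-preserving index-$2$ subgroup via $R_iR_j=S_i^{-1}S_j$, and then invoke the general principle that passing to $\widetilde\Gamma^+$ doubles the mirror orbifold $\widetilde\Gamma\backslash\mathbb H\cong\mathcal S$ across its reflecting boundary. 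Both arguments are short; the paper's is a one-line reading of the fundamental domain, while yours makes the $\Z_2$-symmetry (reflection exchanging the two copies of $\mathcal S$) manifest from the outset, which is conceptually pleasant given that this symmetry is used later in the section.
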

\begin{figure}
\centering
        \includegraphics[width=\textwidth]{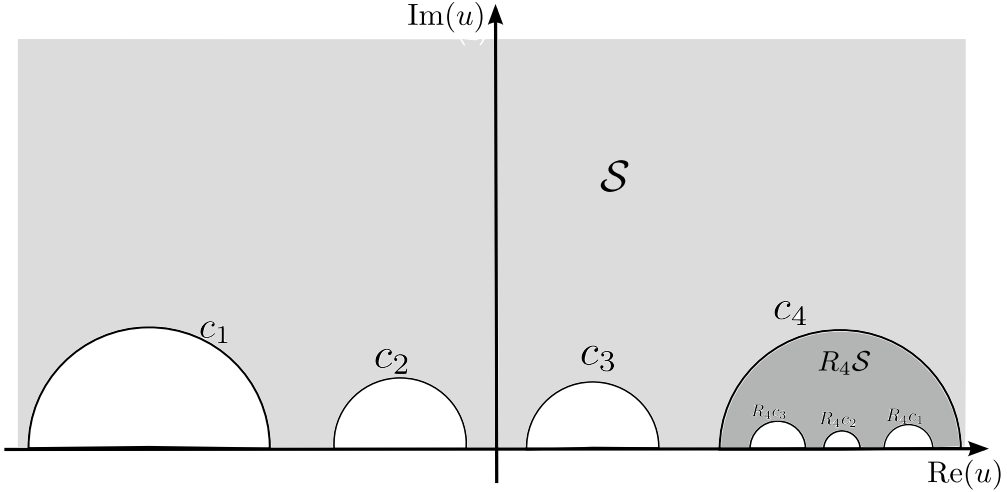}
\caption{Sketch of the construction of the Schottky group associated to 
a 4-funneled symmetric surface. The fundamental domain is the union of 
$\mathcal S$ and the reflection of this domain along the circle $c_4$. The
Schottky surface is then obtained by gluing together the circles $c_i$ and 
$R_4c_i$ (for $i=1,2,3$) so finally one obtains the same surface as defined in Definition~\ref{def:sym_n_funnel_surface}.
} 
\label{fig:half_plane_n-funnel}
\end{figure}
\begin{proof}
 First we note that that for $j=1,\ldots,{n_f}-1$ we have $R_{n_f}R_j\in SL(2,\R)$. If we define
 $D_{j+{n_f}-1}:=R_{n_f}(D_j)$, then the transformation $R_{n_f}R_j$ maps the boundary of 
 $D_j$ to the boundary of $D_{j+{n_f}-1}$ and the interior of $D_j$ to the exterior of 
 $D_{j+{n_f}-1}$.  This shows that $\Gamma_{{n_f},\psi}$ is a Schottky group in the sense
 of Example~\ref{exmpl:genSchottky}. 
 
 The fact that $X_{{n_f},\psi}$ is the associated Schottky surface can be seen as follows: By definition 
 of the disks $D_j$ the fundamental domain of the Schottky group 
 $\Gamma_{{n_f},\psi}$ consists of two copies of 
 the domain  $\tilde S$ that are glued together along $c_n$. 
 The Schottky surface $\Gamma_{{n_f},\psi}\backslash\mathbb H$
 is obtained by gluing together the fundamental domain along the geodesic boundaries
 of the disks that are identified by the generators of the Schottky group, so
 the  $\Gamma_{{n_f},\psi}\backslash\mathbb H$ consists of two copies of $\mathcal S$
 that are glued together the same way as defined in 
 Definition~\ref{def:sym_n_funnel_surface} (see Figure~\ref{fig:half_plane_n-funnel}).
\end{proof}
We can now define the flow-adapted IFS and study its symmetry group. 
After this we will show, that the dynamical 
zeta functions of these IFS coincides with the Selberg zeta function.
\begin{Def}\label{def:flow_adapted_IFS}
 Let ${n_f}\geq 3$ and $0<\psi<2\pi/{n_f}$. Let $m_i$, $r_i$ and 
 $R_i$ be constructed as above. We define the offset
 variable
 \[
  \delta_{\tu{offset}}:=b_{n_f}-a_1 + 1.
 \]
 The \emph{flow-adapted IFS} is the holomorphic IFS with $N=2n_f$, where the
 disks $D_i$  are the Euclidean disks in $\C$ with centers $m_i$ and radii $r_i$ for $1\leq i
 \leq n_f$, and with centers $m_{i-n_f}+\delta_{\tu{offset}}$ and radii $r_{i-n_f}$ for 
 $n_f< i \leq 2n_f$. The adjacency matrix $A$ is given by
 $A_{i,j+n_f}=A_{j+n_f,i}=1$ for all $1\leq i,j\leq n_f$ with $i\neq j$, and $A_{i,j}=0$ else. 
 Finally for $i\rightsquigarrow j$ the maps $\phi_{i,j}$ are given by
 \[
  \phi_{i,j}(u):= \left\{\begin{array}{ll}
                          R_{j-n_f}(u)+\delta_{\tu{offset}} &\tu{for } i=1,\dots, n_f\\
                          R_j(u-\delta_{\tu{offset}}) &\tu{for } i=n_f+1,\dots,2n_f.\\
                         \end{array}
  \right. 
 \]
\end{Def}
\begin{figure}
\centering
        \includegraphics[width=\textwidth]{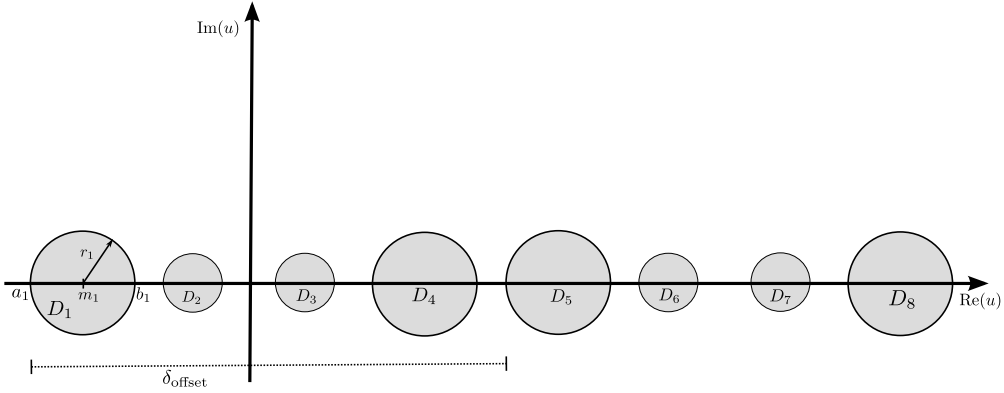}
\caption{Illustration of the disk configuration of the
flow-adapted IFS as defined in Definition~\ref{def:flow_adapted_IFS} 
for a 4-funneled surface.} 
\label{fig:flow_adapted_IFS}
\end{figure}
We next want to compare the symmetry group of the IFS with the symmetry group
of the surface (for a sketch of the disk configuration of a 4-funneled surface see 
Figure~\ref{fig:flow_adapted_IFS}).  As the surface consists of two identical parts of 
$\tilde{\mathcal S} \subset \mathbb D$, glued together, we first note that the symmetry group 
of the domain $\tilde{\mathcal S}$ is the dihedral group $D_{n_f}$, 
the symmetry group of an $n_f$ sided regular polygon,
which is a group of order $2n_f$. This symmetry group is generated
by a rotation of $2\pi/n_f$ around $0\in\mathbb D$
\[
 \tilde g_{1} (u) = e^{i2\pi/n_f} u
\]
and by the reflection along the real axis
\[
 \tilde g_{2} (u) = \bar u.
\]
The surface itself has one additional reflection symmetry, along the plane in which 
the two copies of $\tilde{\mathcal S}$ are glued together.
This reflection commutes with the action of $D_{n_f}$ on the two copies of 
$\tilde{\mathcal S}$ so the full symmetry group of $X_{n_f,\psi}$ is given by
$D_{n_f}\times \Z_2$.  As the flow-adapted IFS is directly constructed from the
two copies of $\tilde{\mathcal S}$ this symmetry action can directly be transferred
via the Cayley transform to the IFS.  In particular, the group action of the first 
generator is given by
\[
 g_1(u) := C\circ \tilde g_1\circ C^{-1} (u), \quad \tu{for } u\in \bigcup\limits_{j=1}^{n_f}D_j,
\]
and
\[
 g_1(u) := \delta_{\tu{offset}}+C\circ \tilde g_1\circ C^{-1} (u-\delta_{\tu{offset}}), \quad \tu{for } u\in \bigcup\limits_{j=n_f+1}^{2n_f}D_j.
\]
For the definition of the second generator one has to pay a  bit more 
attention, because the reflection along the real axis is an antiholomorphic
isometry of $\mathbb D$.  So is the transformation of this action to $\mathbb H$,
which is given by $C\circ g_2\circ C^{-1}(u)=-\bar u$. In order to make this
action holomorphic, as required in Definition~\ref{def:SymGroup}, we have to 
use the fact that the flow-adapted IFS naturally commutes with complex 
conjugation.  We can thus define
\[
 g_2(u) = \overline{C\circ g_2\circ C^{-1}(u)}=-u, \quad \tu{for } u\in \bigcup\limits_{j=1}^{n_f}D_j,
\]
and
\[
 g_2(u) = \delta_{\tu{offset}}+\overline{C\circ g_2\circ C^{-1}(u-\delta_{\tu{offset}})}=2\delta_{\tu{offset}}- u, 
 \quad \tu{for } u\in \bigcup\limits_{j=n_f+1}^{2n_f}D_j.
\]
Finally, the third group generator transforms to 
\[
 g_3(u)=
 \begin{cases} 
 u+\delta_{\tu{offset}}& \tu{for } u\in \bigcup\limits_{j=1}^{n_f}D_j \\
 u-\delta_{\tu{offset}}& \tu{for } u\in \bigcup\limits_{j=n_f+1}^{2n_f}D_j.
 \end{cases}
\]

From the construction of the flow-adapted IFS, it follows directly
that the symmetry action commutes with the IFS and that $D_{n_f}\times \Z_2$
is really a symmetry group in the sense of Definition~\ref{def:SymGroup}.
The action on the symbols can be represented as a permutation group of the
$2n_f$ symbols. In standard cycle notation, the first and third generators 
can be written as
\begin{eqnarray*}
 g_1&=&(1,2,\ldots,n_f)(n_f+1,n_f+2,\ldots,2 n_f),\\
 g_3&=&(1,n_f+1)(2,n_f+2)\ldots(n_f,2n_f).
\end{eqnarray*}
For the second element we have to distinguish between two cases depending on the parity of $n_f$:
If $n_f$ is even we have
\[
 g_2=(1,n_f)(2,n_f-1)\ldots\left(\frac{n_f}{2},\frac{n_f}{2}+1\right)(n_f+1,2n_f)(n_f+2,2n_f-1)\ldots\left(\frac{3n_f}{2},\frac{3n_f}{2}+1\right),
\]
and for $n_f$ odd,
\[
\begin{split}
 g_2 & =(1,n_f)(2,n_f-1)\ldots\left(\frac{n_f-1}{2},\frac{n_f+3}{2}\right)(n_f+1,2n_f) \\
 &\hskip1in\times(n_f+2,2n_f-1)\ldots\left(\frac{3n_f-1}{2},\frac{3n_f+3}{2}\right).
 \end{split}
\]

These arguments show that the flow-adapted IFS incorporates the full symmetry group
$D_{n_f}\times \Z_2$ of the surface.   In order to deduce a corresponding factorization of the Selberg zeta function $Z_{X_{n_f,\psi}}$ associated to the surface, we have one more fact to check.  
We must verify that the dynamical zeta function of the flow-adapted IFS indeed
contains the Selberg zeta function of the surface. 
\begin{prop}\label{prop:dynamical_Selberg_zeta_flow_IFS}
 Let $n_f\geq 3$ and $0<\psi<2\pi/n_f$, and let $\mathcal L_s$ be the Ruelle
 transfer operator of the flow-adapted IFS as defined in
 Definition~\ref{def:flow_adapted_IFS}, with potential $V_s(u)=[(\phi^{-1})'(u)]^{-s}$.\footnote{
 Note that for any $u\in \R$, $(\phi^{-1})'(u)$ is real and positive so we can define
 $[(\phi^{-1})'(u)]^{-s}$ for any $s\in \mathbb C$. Because $(\phi^{-1})'(u)\neq 0$,
 for any $u\in D$, we can holomorphically extend $[(\phi^{-1})'(u)]^{-s}$ in $u$
 from the real line to any connected component of $\phi(D)$.}
 Then the dynamical zeta function coincides with the Selberg zeta function of 
 $X_{n_f,\psi}$
 \[
  Z_{X_{n_f,\psi}}(s)=\det(1-\mathcal L_s).
 \]
\end{prop}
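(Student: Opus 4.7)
The plan is to evaluate $\det(1-\mathcal L_s)$ via the Ruelle fixed-point formula \eqref{eq:zeta_fixpoint_formula} specialized to $z=1$, and then to show that the resulting sum over closed orbits of the flow-adapted IFS reassembles into the Euler product \eqref{ZX.def} defining $Z_{X_{n_f,\psi}}(s)$.

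The first step is a structural observation. Since the adjacency matrix of the flow-adapted IFS is bipartite between $\{D_1,\dots,D_{n_f}\}$ and $\{D_{n_f+1},\dots,D_{2n_f}\}$, every closed word $w\in\mathcal W_n^{\mathrm{cl}}$ has even length $n=2m$. Writing $w_{2i}=k_i\in\{1,\dots,n_f\}$ and $w_{2i-1}=n_f+j_i$, the $\pm\delta_{\tu{offset}}$ shifts telescope in each consecutive pair of maps, so a direct computation from Definition~\ref{def:flow_adapted_IFS} gives
\[
\phi_w \;=\; R_{k_m}R_{j_m}\circ\cdots\circ R_{k_1}R_{j_1}
\]
as a Möbius transformation. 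Since each factor $R_{k_i}R_{j_i}=(R_{n_f}R_{k_i})^{-1}(R_{n_f}R_{j_i})$ lies in $\Gamma_{n_f,\psi}$, the composition $\phi_w$ is a hyperbolic element of the Schottky group.

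The second step is the bijection. The adjacency constraints $j_i\neq k_{i-1}$ and $k_i\neq j_i$ translate directly into the freely-reduced-word condition on $\phi_w$ as a product of Schottky generators, so the assignment $w\mapsto\phi_w$ gives a bijection between closed words of the flow-adapted IFS modulo cyclic shift and nontrivial conjugacy classes in $\Gamma_{n_f,\psi}$, equivalently closed geodesics on $X_{n_f,\psi}$. Primitive words correspond to primitive geodesics, and $r$-fold iteration $w^r$ to $r$-fold traversal. For any closed $w$ corresponding to a geodesic $\gamma_w$ of length $l(\gamma_w)$, the fixed point $u_w$ is the attracting fixed point of the hyperbolic Möbius transformation $\phi_w$, so
\[
\phi_w'(u_w) \;=\; e^{-l(\gamma_w)},
\]
and a chain-rule computation from $V_s(u)=[(\phi^{-1})'(u)]^{-s}$ collapses the iterated potential into $V_w(u_w) = [\phi_w'(u_w)]^s = e^{-s\,l(\gamma_w)}$.

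Substituting these two identities into \eqref{eq:zeta_fixpoint_formula}, expanding $1/(1-e^{-l(\gamma_w)})$ as a geometric series $\sum_{k\ge 0}e^{-k\,l(\gamma_w)}$, and reorganizing the sum over $\mathcal W_n^{\mathrm{cl}}$ by cyclic-shift orbits of iterates of primitive words (a primitive orbit of length $p$ contributes $p$ equal terms to $\mathcal W_{rp}^{\mathrm{cl}}$ through its $r$-th iterate, so that the prefactor $1/n=1/(rp)$ combines with $p$ to give $1/r$), produces the familiar sum
\[
\sum_{\gamma\in\mathcal P_X}\sum_{k\ge 0}\sum_{r\ge 1}\frac{1}{r}\,e^{-(s+k)r\,l(\gamma)}.
\]
Applying $\sum_{r\ge 1} x^r/r=-\log(1-x)$ and reconverting to a product then yields $\prod_{\gamma\in\mathcal P_X}\prod_{k\ge 0}(1-e^{-(s+k)l(\gamma)})=Z_{X_{n_f,\psi}}(s)$. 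The principal technical obstacle is the rigorous verification of the bijection in the second step: one must check carefully that the IFS admissibility constraints cut out exactly the freely-reduced words in $\Gamma_{n_f,\psi}$, and that cyclic-shift equivalence of closed flow-adapted words matches conjugacy in the Schottky group (with primitivity preserved on both sides). Once these combinatorial points are settled, the reassembly proceeds essentially as in the Bowen-Series calculation \cite[Thm.~15.8]{Bor07}.
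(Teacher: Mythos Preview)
Your proposal is correct and follows essentially the same route as the paper. The paper's proof is very short: it invokes the product formula for $\det(1-\mathcal L_s)$ (either via Theorem~\ref{thm:factorizationZeta} with $G=\{\mathrm{Id}\}$ or directly as in \cite[Thm.~15.8]{Bor07}) and then defers the remaining work---the bijection between prime closed words of the flow-adapted IFS and primitive closed geodesics on $X_{n_f,\psi}$, together with the length identity $l(\gamma)=-\log\phi_w'(u_w)$---to the subsequent Proposition~\ref{prop:SymIFS_orbit_geodesic_equiv}, which is proved carefully. Your outline merges these two pieces, and what you flag as ``the principal technical obstacle'' is exactly the content of that proposition; your direct observation that the $\pm\delta_{\tu{offset}}$ shifts telescope so that $\phi_w$ is literally the group element is slightly cleaner than the paper's separate definition of the map $T(w)$ in \eqref{eq:word_to_relections}, but the substance is identical.
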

\begin{proof}
 If we take the trivial group $G=\{\mathrm{Id}\}$ as a symmetry group, then
 as a special case of Theorem~\ref{thm:factorizationZeta} we obtain
\[
 \det(1-\mathcal L_s)=
                       \prod\limits_{[\w]\in\left[\mathcal W_{\tu{prime}}^{\{\mathrm{Id}\}}\right]}
                       \prod\limits_{k\geq 0}\left(1-\phi_w'(u_\w)^{k+s}\right).
 \]
 Note that this formula is not at all related to a symmetry decomposition but can be 
 obtained directly by a straight forward calculation (see e.g.\,
 \cite[proof of Theorem 15.8]{Bor07}). 
 Proposition~\ref{prop:dynamical_Selberg_zeta_flow_IFS} then follows from
 the following Proposition~\ref{prop:SymIFS_orbit_geodesic_equiv} which establishes 
 a one-to-one correspondence between the set $[\mathcal W_{\tu{prime}}^{\{\mathrm{Id}\}}]$ 
 of prime words of the flow-adapted IFS 
 and the set of primitive closed geodesics on $X_{n_f,\psi}$.
\end{proof}

\begin{prop}\label{prop:SymIFS_orbit_geodesic_equiv}
 Let $n_f\geq 3$ and $0<\psi<2\pi/n_f$ and consider the corresponding 
 flow-adapted IFS from Definition~\ref{def:flow_adapted_IFS}. 
 Then there exists a bijection between
 the classes of prime words in $[ \mathcal W_{\mathrm{prime}}^{\{\mathrm{Id}\}}]$ and 
 the primitive closed
 geodesics on $X_{n_f,\psi}$. Additionally, the  length of the geodesic 
 associated to $[\w]$ is given by
 \begin{equation}\label{eq:length_of_geod}
   -\log(\phi_w'(u_\w)).
 \end{equation}
\end{prop}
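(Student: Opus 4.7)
The strategy is to identify each cyclic-shift class of prime words with a primitive hyperbolic conjugacy class in $\Gamma_{n_f,\psi}$ via composition of the IFS maps, and then to invoke the standard parametrization of primitive closed geodesics on a Schottky surface by primitive conjugacy classes of its Schottky group.

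First, I would exploit the bipartite structure of the adjacency matrix: since transitions occur only between $\{1,\dots,n_f\}$ and $\{n_f+1,\dots,2n_f\}$, every closed word $w$ has even length $2n$. Setting $k_r:=w_r$ if $w_r\leq n_f$ and $k_r:=w_r-n_f$ otherwise, a direct computation with the two cases of $\phi_{i,j}$ shows that the offset terms $\pm\delta_{\tu{offset}}$ in consecutive maps telescope, so that
\[
 \phi_w(u)=R_{k_{2n}}\circ R_{k_{2n-1}}\circ\cdots\circ R_{k_1}(u)\quad\text{on }D_{w_0}.
\]
Each $R_k$ being an anti-holomorphic involution and $2n$ being even, the composition $\gamma_w:=R_{k_{2n}}\cdots R_{k_1}$ is holomorphic, and via the identity $R_iR_j=(R_{n_f}R_i)^{-1}(R_{n_f}R_j)$ it lies in $\Gamma_{n_f,\psi}$. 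The adjacency rule $k_{r+1}\neq k_r$ together with the free-product structure of $\langle R_1,\dots,R_{n_f}\mid R_j^2=1\rangle$ makes the word $R_{k_{2n}}\cdots R_{k_1}$ cyclically reduced, so $\gamma_w$ is a nontrivial hyperbolic element.

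Next I would verify that cyclic shifts and iterations translate correctly. A two-step cyclic shift yields $\gamma_{\sigma_L^2\w}=(R_{k_2}R_{k_1})\gamma_w(R_{k_2}R_{k_1})^{-1}$, which is genuine conjugation inside $\Gamma_{n_f,\psi}$, while $k$-fold iteration in the sense of (\ref{eq:def_wk}) gives $\gamma_{\w^k}=\gamma_w^k$. Consequently primitivity of $[\w]\in[\mathcal W^{\{\mathrm{Id}\}}_{\tu{prime}}]$ corresponds to primitivity of $\gamma_w$ in $\Gamma_{n_f,\psi}$, and the assignment $[\w]\mapsto[\gamma_w]$ descends to a well-defined map from cyclic-shift classes to primitive conjugacy classes. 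Surjectivity follows by taking any cyclically reduced representative of a primitive conjugacy class in $\Gamma_{n_f,\psi}$, reading it as a sequence of reflections $R_{k_1},\dots,R_{k_{2n}}$, and encoding this as an IFS word by alternating between original and shifted disks; the adjacency and closedness conditions are then automatic. Combined with the classical bijection between primitive conjugacy classes in a Schottky group and primitive closed geodesics on its quotient surface, this gives the desired correspondence.

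For the length formula, $u_\w\in D_{w_0}\cap\mathbb R$ is the attracting fixed point of $\gamma_w$ on $\partial\mathbb H$; for any hyperbolic element $\gamma\in PSL(2,\mathbb R)$, the derivative of the M\"obius action at the attracting fixed point equals $e^{-\ell(\gamma)}$, which yields $\phi_w'(u_\w)=e^{-\ell(\gamma_w)}$ and hence (\ref{eq:length_of_geod}). The main obstacle is the bookkeeping around single cyclic shifts: the shift $\sigma_L$ gives $\gamma_{\sigma_L\w}=R_{k_1}\gamma_w R_{k_1}$, which amounts to conjugation by the orientation-reversing element $R_{k_1}\notin\Gamma_{n_f,\psi}$. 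One has to verify that this operation still corresponds to the same primitive closed geodesic on $X_{n_f,\psi}$ — so that the $2n$-element $\sigma_L$-orbit of a prime $\w$ maps onto a single primitive conjugacy class rather than onto two distinct ones — using the specific geometric way that $\Gamma_{n_f,\psi}$ sits inside the extended group $\langle R_1,\dots,R_{n_f}\rangle$ generated by the boundary reflections.
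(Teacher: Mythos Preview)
Your overall strategy matches the paper's: associate to each closed word the product of reflections $R_{k_{2n}}\cdots R_{k_1}$, land in $\Gamma_{n_f,\psi}$ via $R_iR_j=(R_{n_f}R_i)^{-1}(R_{n_f}R_j)$, and invoke the standard bijection between primitive conjugacy classes and primitive closed geodesics. The telescoping of offsets, the surjectivity argument, and the length formula are essentially as in the paper. But the ``obstacle'' you flag at the end is a genuine gap, and your proposed resolution does not work.

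Concretely, for $n_f=3$ and $w=(1,5,1)$ one has $\gamma_w=R_1R_2$, whereas for $\sigma_L w=(5,1,5)$ the underlying M\"obius map (after removing the translation by $\delta_{\tu{offset}}$) is $R_2R_1=\gamma_w^{-1}$. In the free group $\Gamma_{3,\psi}$ these lie in \emph{distinct} conjugacy classes, so $[\w]\mapsto[\gamma_w]$ is not well-defined. Conjugation by a single reflection $R_{k_1}$ realizes the orientation-reversing $\Z_2$ symmetry of the surface and may send an oriented closed geodesic to its time-reversal; since the target of the bijection is the set of primitive conjugacy classes (equivalently \emph{oriented} primitive geodesics), one cannot hope to verify that the two images agree. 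Note also that the separate shift class $\{(2,4,2),(4,2,4)\}$ is the one that should correspond to $[R_2R_1]$, so with your assignment injectivity would fail as well.

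The paper avoids this by the two-case definition (\ref{eq:word_to_relections}): when $w_0\le n_f$ one takes $T(w)=R_{k_{2n}}\cdots R_{k_1}$ as you do, but when $w_0>n_f$ the product is indexed from $k_0$ to $k_{2n-1}$ instead. A direct check then shows that a single $\sigma_R$-shift (which flips the parity of $w_0$) leaves $T$ \emph{identically} unchanged, so well-definedness on shift classes reduces to the even-shift case, which is honest $\Gamma_{n_f,\psi}$-conjugation. One minor correction: the $R_k$ are holomorphic M\"obius maps ($u\mapsto r_k^2/(u-m_k)+m_k$); no parity argument is needed for holomorphy of $\gamma_w$.
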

\begin{proof}
Let $R_{j}$ with $j=1,\ldots, n_f$ be as in Definition~\ref{def:flow_adapted_IFS}, and
$\Gamma_{n_f,\psi}$ the Schottky group from Lemma~\ref{lem:Schottky_of_sym_n_funnel}.
It is known (see e.g.~\cite[Proposition 2.16]{Bor07}) that the set of primitive 
closed geodesics on $X_{n_f,\psi}$ is in bijection to 
the set of primitive conjugacy classes $[T]\in\Gamma_{n_f,\psi}$.  (For a conjugacy class, primitive 
means that there is no $S\in [T]$ such that $S=R^k$ for some 
$R\in \Gamma_{n_f,\psi}$ and $k>1$.)
Consequently, our aim is to construct a bijection
\[
 T:\left[ \mathcal W^{\{\mathrm{Id}\}}_{\mathrm{prime}}\right] \to 
    \left\{\tu{primitive conjugacy classes of }\Gamma_{n_f,\psi}\right\}.
\]

In order to accomplish this, we note that from the form of the adjacency 
matrix in Definition~\ref{def:flow_adapted_IFS} we have, for $w\in \mathcal W_k$, that
$w_i\leq n_f \Rightarrow w_{i+1}>n_f$. 
Thus, if $w$ is a closed word, $k$ has to be even. We first define the map,
\[
 T:\left[ \mathcal W^{\{\mathrm{Id}\}}\right] \to \left\{\tu{conjugacy classes of }\Gamma_{n_f,\psi}\right\},
\]
on the closed words.  Later we will show that we can easily restrict it to the
prime words. For a closed  word  $w=(w_0,\ldots,w_{2r})$, we define the 
map $T$ by
\begin{equation}\label{eq:word_to_relections}
 T(w):= \left\{\begin{array}{ll}
                          R_{w_{2r}}R_{w_{2r-1}-n_f}\ldots R_{w_2}R_{w_1 -n_f}  &\tu{if } w_0\leq n_f \\
                          R_{w_{2r-1}}R_{w_{2r-2}-n_f} \ldots R_{w_1}R_{w_0-n_f}  &\tu{if } w_0> n_f 
                         \end{array}.
  \right.
\end{equation}
As closed words have to be of even length, $T(w)$ consists of a even number of 
reflections and is thus a positive isometry.  We first need to show that $T$ is well 
defined on $[\mathcal W^{\{\mathrm{Id}\}}]$, 
i.e.~that it doesn't depend on the choice of the representative of $[\w]$. 
So let $\mathbf v\in [\w]$. Without loss of generality we can 
assume that $w_0\leq n_f$ and $v_0\leq n_f$.  Otherwise we could simply apply
the right-shift $\sigma_R$ to obtain such an element in the same equivalence
class, and that is mapped to the identical element in
$\Gamma_{n_f,\psi}$. Consequently, there exists an integer $0\leq t\leq r$ such that 
$v=(w_{2t},\ldots,w_{2r},w_1\ldots,w_{2t})$ and we obtain
\[
 T(v)=R_{w_{2t}}\ldots R_{w_1-n_f}R_{w_{2r}}\ldots R_{w_{2t+2}}R_{w_{2t+1}-n_f} =S^{-1}T(w)S,
\]
for $S=R_{w_{2r}}\ldots R_{w_{2t+1}-n_f}$. Thus $T(v)$ 
is in the same conjugacy class as $T(w)$.

In order to see the injectivity, we consider two words $v$ and $w$ that are mapped to 
the same conjugacy class. We assume first that 
\[
 T(v)=R_aR_b T(w)R_bR_a.
\]
From the form of the adjacency matrix, we see that it is not possible that an element in
the image of $T$ begins and ends with the same generator. Thus we have either
\[
R_bR_a=R_{w_1-n_f}R_{w_2}
\]
or 
\[
R_aR_b=R_{w_{2r-1}-n_f}R_{w_{2r}}.
\]
In the first case we have $v=\sigma_L^2 w$ in the latter case $v=\sigma_R^2 w$.
By iterating this argument for arbitrary conjugations of $T(w)$ and $T(v)$, we can deduce 
the injectivity of the map $T$.

As for the surjectivity of $T$, we first note that for two arbitrary indices
$1\leq i,j\leq n_f$, the element $R_iR_j$ can be written as 
$(R_{n_f}R_i)^{-1}R_{n_f}R_j$.  This shows that $\Gamma_{n_f,\psi}$ contains all 
elements that can be written as a composition of an even number of elements 
$R_i$. Let $S\in \Gamma_{n_f,\psi}$ be such an arbitrary element, in the form $S=R_{s_{2r}}\ldots R_{s_1}$ with 
$1\leq s_i\leq n_f$. 
Since two consecutive
identical reflections cancel each other, we can assume that $s_{i}\neq s_{i+1}$. 
Finally, if $s_{1}=s_{2r}$ then we can conjugate $S$ by $R_{s_2}R_{s_1}$, which leads 
to an element composed from $2r-2$ reflections.  By iterative conjugation, we can thus
reduce the element to $\tilde S=R_{\tilde s_{2\tilde r}}\ldots R_{\tilde s_1}$ with 
$\tilde s_{1}\neq \tilde s_{2\tilde r}$ and we obtain
\[
 \tilde S=T((s_{2\tilde r},s_1+n_f,s_2,\ldots, s_{2\tilde r-1}+n_f,s_{2\tilde r})).
\]

We have thus constructed a bijective map between the classes of closed words 
and the conjugacy classes in $\Gamma_{n_f,\psi}$. We will now prove that this map can
be restricted to a bijection between the classes of prime words and the 
primitive conjugacy classes. As $T$ is bijective, it suffices to show that $T$ maps composite closed words
to composite conjugacy classes. This is, however, straightforward from the 
definition of $T$ as obviously $T([\w^k])=T(w)^k$. 

We conclude that the restriction of $T$ to the prime words defines a bijection between the classes
of closed, prime words and primitive conjugacy classes. Using the above
mentioned result on the one-to-one correspondence between oriented primitive 
geodesics and primitive conjugacy classes, this is equivalently a bijection 
to the set of primitive, 
oriented, closed geodesics.  

It only remains to prove (\ref{eq:length_of_geod}). 
For this, we first recall that the length of the primitive geodesic 
associated to a conjugacy class of an hyperbolic element $T\in \Gamma_{n_f,\psi}$ 
is equal to 
the displacement length of $T$ denoted by $l(T)$ (see e.g.\,\cite[Proposition 2.16]{Bor07}).
It is also a well known fact that if $u_T\in\partial \mathbb H$ is the stable 
fixed point of $T$, then $l(T) =-\log((T)'(u_T))$ (see e.g.\,\cite[(15.2)]{Bor07}).
Next we recall from the proof of Theorem \ref{prop:dynamical_quantities} that
$\phi_w'(u_w)$ is independent of the representative in $[\w]$. Assuming, as above, 
that $w_0 \leq n_f$, we calculate that
\[
 u_\w=\phi_w(u_\w)=R_{w_{2r}}\ldots R_{w_1-n_f} u_\w.
\]
Hence $u_\w$ is the stable fixed point of the hyperbolic element $T(w)$, and 
for the displacement length of $T$ we obtain $l(T(w)) =-\log((T(w))'(u_\w))$. 
This establishes (\ref{eq:length_of_geod}) and completes the proof of 
Proposition~\ref{prop:SymIFS_orbit_geodesic_equiv}.
\end{proof}

We have thus shown that the flow-adapted IFS incorporates the full symmetry group
$G=D_{n_f}\times{\Z_2}$ of the surfaces $X_{n_f,\psi}$ and additionally leads to a transfer operator whose
dynamical zeta function incorporates the Selberg zeta function of the surface. 
However, before we can apply Theorem~\ref{thm:factorizationZeta} to obtain a 
factorization of the Selberg zeta function we have to face one final problem.
The commutation of the group action with the IFS does not 
imply that the potentials, 
\[
 V_s(u)=[(\phi^{-1})'(u)]^{-s},
\]
that appear in the transfer operator $\mathcal L_s$
of Proposition~\ref{prop:dynamical_Selberg_zeta_flow_IFS}, are $G$-invariant.
In fact these potentials are not invariant, as can be seen from the following calculations,
\begin{equation}\label{eq:non_G_inv_of_phi}
 \phi^{-1}(gu)=g(\phi^{-1}(u)) ~~\Rightarrow~~ (\phi^{-1})'(gu)=\frac{g'(\phi^{-1}(u))}{g'(u)}(\phi^{-1})'(u).
\end{equation}
Consequently, the transfer operators $\mathcal L_s$ do not commute with the left regular
$G$-action and will in general not leave the symmetry-reduced function spaces
$B^\chi$ invariant. This problem can however be fixed by an averaging trick for the
potential, i.e.\,by replacing the potential $V_s$ by a family of $G$-invariant potentials
$V_s^G$ which leads to the same dynamical zeta functions.
\begin{lem}\label{lem:GinvPotential}
 The family of potentials, 
 \begin{equation}\label{eq:GinvPotential}
   V_s^G(u):= \prod\limits_{g\in G} V_s(gu)^{1/|G|} = \prod\limits_{g\in G} [(\phi^{-1})'(gu)]^{-s/|G|} 
 \end{equation}
is $G$-invariant. 
 
 Furthermore, if $\mathcal L_s^G$ denotes the family of transfer operators associated to the potentials $V_s^G$, 
 then $\mathcal L_s^G$ commutes with the left regular $G$-action on $B(D)$ and
 \begin{equation}\label{eq:zeta_equality_sym_potential}
  \det(1-z\mathcal L_s^G) = \det(1-z\mathcal L_s)=d(s,z).
 \end{equation}
\end{lem}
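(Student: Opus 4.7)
The first claim, $G$-invariance of $V_s^G$, is immediate from the definition: for any $h\in G$, the substitution $g\mapsto gh$ is a bijection of $G$, so
\[
V_s^G(hu) = \prod_{g\in G} V_s(ghu)^{1/|G|} = \prod_{g'\in G} V_s(g'u)^{1/|G|} = V_s^G(u).
\]
The commutation of $\mathcal L_s^G$ with the left regular $G$-action on $B(D)$ is then the general principle already recorded in the passage surrounding (\ref{eq:V_G-inv})--(\ref{eq:symRedTransferOp}): for a $G$-equivariant IFS, a $G$-invariant potential gives a transfer operator that intertwines with the left regular representation.

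For the equality of Fredholm determinants, the plan is to apply the fixed-point formula (\ref{eq:zeta_fixpoint_formula}) to both $\mathcal L_s$ and $\mathcal L_s^G$ and reduce everything to showing that the iterated products agree along closed orbits, i.e.\ $V_w^G(u_w) = V_w(u_w)$ for every $w\in \mathcal W_n^{cl}$. Since both $\det(1-z\mathcal L_s)$ and $\det(1-z\mathcal L_s^G)$ are entire in $z$, an equality for $|z|$ sufficiently small will extend to all of $\C$ by analytic continuation, which handles the convergence caveat inherent to the fixed-point formula.

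The identity at the level of iterated products is the heart of the argument and will be established by a telescoping cancellation. Using (\ref{eq:non_G_inv_of_phi}) I factor $V_s^G(u) = V_s(u)\,H_s(u)$, where
\[
H_s(u) := \prod_{g\in G}\left[\frac{g'(\phi^{-1}(u))}{g'(u)}\right]^{-s/|G|}.
\]
Setting $u_0:=u_w$ and $u_k := \phi_{w_{0,k}}(u_w)$ for $1\le k\le n$, the definition of $\phi^{-1}$ gives $\phi^{-1}(u_k)=u_{k-1}$, and $u_n=\phi_w(u_w)=u_w=u_0$ because $u_w$ is the fixed point of the closed word $w$. For each fixed $g\in G$ the inner product
\[
\prod_{k=1}^n \frac{g'(\phi^{-1}(u_k))}{g'(u_k)} = \prod_{k=1}^n \frac{g'(u_{k-1})}{g'(u_k)}
\]
therefore telescopes to $g'(u_0)/g'(u_n)=1$, whence $\prod_{k=1}^n H_s(u_k)=1$ and consequently $V_w^G(u_w)=V_w(u_w)$, as required.

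The only mild technical point is branch-consistency of the fractional powers: since $(\phi^{-1})'$ and the $g'$ are real, positive, and nowhere-vanishing on the real axis inside $D$, the principal branches of $[\,\cdot\,]^{-s}$ and $[\,\cdot\,]^{-s/|G|}$ extend holomorphically to $D$ (as explained in the footnote of Proposition~\ref{prop:dynamical_Selberg_zeta_flow_IFS}), so the factorization $V_s^G=V_s\cdot H_s$ and the telescoping identity are consistent choices of branches. This is the main thing one has to be careful with, but it is not a serious obstacle.
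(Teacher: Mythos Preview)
Your proof is correct and follows essentially the same route as the paper: reduce the determinant equality via the fixed-point formula (\ref{eq:zeta_fixpoint_formula}) to showing $(V_s^G)_w(u_w)=(V_s)_w(u_w)$ for closed words, and establish this by the telescoping identity $\phi^{-1}(\phi_{w_{0,k}}(u))=\phi_{w_{0,k-1}}(u)$ together with $\phi_w(u_w)=u_w$. Your explicit factorization $V_s^G=V_s\cdot H_s$ and your remark on branch consistency are minor cosmetic additions, but the argument is the same.
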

\begin{proof}
The $G$-invariance $V_s^G$ is clear by the construction \eqref{eq:GinvPotential}.  It follows directly
that $\mathcal L_s^G$ commutes with the left regular representation of the $G$-action.

In order to prove (\ref{eq:zeta_equality_sym_potential}), we can use the fact that in
(\ref{eq:zeta_fixpoint_formula})  the potential appears only via the terms $V_w(u_w)$.
Thus it suffices to show that for all $n\in \N$ and all closed words 
$w\in \mathcal W^{cl}_n$ we have
\begin{equation}\label{eq:V_orbit_equivalent}
  \left(V_s^G\right)_w(u_w)=\left(V_s\right)_w(u_w).
\end{equation}
Thus we calculate for $u\in D$,
\begin{eqnarray*}
 \left(V_s^G\right)_w(u)&=&\prod\limits_{k=1}^{n_w} V_s^G(\phi_{w_{0,k}}(u))\\
 &=&\prod\limits_{k=1}^{n_w}\prod\limits_{g\in G} \left[ (\phi^{-1})'(g\phi_{w_{0,k}}(u))\right]^{-s/|G|}\\
 &\underset{(\ref{eq:non_G_inv_of_phi})}{=}&\left(\prod\limits_{g\in G} \prod\limits_{k=1}^{n_w} \frac{g'(\phi^{-1}(\phi_{w_{0,k}}(u)))}{g'(\phi_{w_{0,k}}(u))}(\phi^{-1})'(\phi_{w_{0,k}}(u))\right)^{-s/|G|} 
\end{eqnarray*}
Since $\phi^{-1}(\phi_{w_{0,k}}(u))=\phi_{w_{0,k-1}}(u)$, the terms in subsequent factors of
the product over $k$ cancel out, and one obtains
\[
 \left(V_s^G\right)_w(u)=\left(\prod\limits_{g\in G}  \frac{g'(u)}{g'(\phi_{w_{0,n_w}}(u))} \prod\limits_{k=1}^{n_w}(\phi^{-1})'(\phi_{w_{0,k}}(u))\right)^{-s/|G|}.
\]
Plugging in $u_w$ and using $\phi_{w_{0,n_w}}(u_w)=u_w$, we finally obtain
\[
 \left(V_s^G\right)_w(u_w)=\left(\prod\limits_{g\in G} \prod\limits_{k=1}^{n_w}(\phi^{-1})'(\phi_{w_{0,k}}(u))\right)^{-s/|G|}=\prod\limits_{k=1}^{n_w}\left((\phi^{-1})'(\phi_{w_{0,k}}(u))\right)^{-s}.
\]
This proves (\ref{eq:V_orbit_equivalent}) and finishes the proof of 
Lemma \ref{lem:GinvPotential}.
\end{proof}
From Lemma \ref{lem:GinvPotential} and Corollary~\ref{cor:factorizationZeta}
we conclude
\begin{equation}
 \det(1-z\mathcal L_s)=\prod\limits_{\chi\in \hat G} d_\chi(s,z),
\end{equation}
where
\begin{equation}\label{eq:sym_red_dyn_zeta_Schottky}
 d_\chi(s,z) = \prod\limits_{k\geq0} \prod \limits_{[\w]\in\left[ \mathcal  W^G_{\tu{prime}}\right]}\left({\det}_{V_\chi} \left[1-z^{n_\w} \left[(\phi_{w^{m_\w}})'(u_\w)\right]^{\frac{s+k}{m_\w}}  \rho_\chi(g_\w)\right]\right)^{d_\chi}.
\end{equation}
Finally, this equation together with
Proposition~\ref{prop:dynamical_Selberg_zeta_flow_IFS} yields a factorization
of the Selberg zeta function
\begin{equation}\label{eq:fac_sel_zeta}
 Z_{X_{n_f,\psi}}(s)=\prod_{\chi\in\hat G}Z_{X_{n_f,\psi}}^\chi(s),
\end{equation}
with
\[
 Z_{X_{n_f,\psi}}^\chi(s)=d_\chi(s,1).
\]
\subsection{Numerical calculations of resonances on $X_{n_f,\psi}$}
\label{sec:num_res}
We now turn to the issue of numerical computation of the resonances on the surface $X_{n_f,\psi}$.
These coincide, according to the Patterson-Perry correspondence, with the zeros of the Selberg zeta 
function $Z_{X_{n_f},\psi}$.   And $Z_{X_{n_f},\psi}$ factors
by (\ref{eq:fac_sel_zeta}) into a product of the analytic symmetry reduced 
zeta functions $Z^\chi_{X_{n_f},\psi}$. 
So instead of calculating the zeros of $Z_{X_{n_f,\psi}}$, it suffices to calculate the zeros of 
$Z^\chi_{X_{n_f},\psi}$.  This will turn out to be much easier because 
the computation of  (\ref{eq:sym_red_dyn_zeta_Schottky}) requires many fewer
fixed points than the full zeta function. 

A well known obstacle in the calculation 
of the zeros of dynamical zeta functions is the fact that the standard product
form (\ref{eq:sym_red_dyn_zeta_Schottky}) is only valid in the region of 
absolute convergence.  All resonances lie, however, outside the region 
of absolute convergence, so (\ref{eq:sym_red_dyn_zeta_Schottky}) is of 
no direct use for the numerical calculations of the zeros. The established trick
to circumvent this problem, which was first used by Cvitanovic-Eckhardt 
in physics \cite{CE89} and later by Jenkinson-Pollicott in mathematics \cite{JP02},
is to exploit the analyticity of the dynamical zeta 
function in the $z$-variable.  After performing a Taylor expansion in $z$ one
obtains an expression for the dynamical zeta function that is 
everywhere absolutely convergent.  For the symmetry-reduced zeta function,
this is done in the following proposition which we will state for
an arbitrary holomorphic IFS.
\begin{prop}\label{cycle_prop}
 Let $d^\chi_V(s,z)$ be the symmetry-reduced dynamical zeta function
 from Theorem~\ref{thm:factorizationZeta}. The following power series expansion is
 everywhere absolutely convergent:
 \begin{equation}\label{eq:cycle_expansion}
  d_V^\chi(z)=1+\sum\limits_{N=1}^\infty z^N\sum\limits_{r=1}^N\frac{(-1)^r}{r!}\sum\limits_{
  \begin{array}{c}{\scriptstyle [([\w_1],l_1),\ldots,([\w_r],l_r)]}\\
                   {\scriptstyle l_1n_{\w_1}+\ldots+l_r n_{\w_r} = N}
  \end{array}
   }\prod\limits_{k=1}^r T^\chi_{[\w_k],l_k},
 \end{equation}
 where the third sum is over all $r$-tuples of pairs 
 $([\w],l)\in\left[ \mathcal  W^G_{\tu{prime}}\right]\times \N_{>0}$ 
 such that $l_1n_{\w_1}+\ldots+l_r n_{\w_r} = N$ and
  \begin{equation}\label{eq:T_chi_w_l}
 T_{[\w],l}^\chi := \frac{d_\chi}{l}\frac{\chi(g_{\w}^l) V_{w^{m_\w}}(u_\w)^{l/m_\w}}{1-\phi'_{w^{m_\w}}(u_\w)^{l/m_\w}}.
 \end{equation}
\end{prop}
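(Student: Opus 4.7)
The plan is to start from the exponential representation of $d_V^\chi(z)$ obtained in the proof of Theorem~\ref{thm:factorizationZeta}, where for small $|z|$ one has
\[
 d_V^\chi(z)=\exp\!\left(-d_\chi\sum_{k\geq 0}\sum_{[\w]\in[\mathcal W^G_{\tu{prime}}]}\sum_{l>0}\frac{z^{n_\w l}}{l}\chi(g_\w^l)\Bigl(V_w(g_\w u_\w)\bigl[(\phi_w\circ g_\w)'(u_\w)\bigr]^k\Bigr)^l\right).
\]
The first step is to collapse the inner sum over $k$ as a geometric series, which is legitimate since $|(\phi_w\circ g_\w)'(u_\w)|<1$ by the eventually contracting hypothesis. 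After applying Lemma~\ref{lem:dynamical_quantities_wmw} to replace $(V_w(g_\w u_\w))^l$ and $((\phi_w\circ g_\w)'(u_\w))^l$ by their $w^{m_\w}$-expressions, the exponent takes the compact form $-\sum_{[\w]}\sum_{l>0}z^{n_\w l}\,T^\chi_{[\w],l}$, with $T^\chi_{[\w],l}$ exactly as defined in~\eqref{eq:T_chi_w_l}.

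The second step is purely combinatorial. Using $\exp(-X)=\sum_{r\geq 0}(-1)^r X^r/r!$, I would expand the $r$-th power as an $r$-fold sum over ordered tuples $([\w_1],l_1),\ldots,([\w_r],l_r)$, and then collect all terms having the same total $z$-degree $N:=\sum_{k} l_k n_{\w_k}$. Because $l_k,n_{\w_k}\geq 1$, the condition $\sum_k l_k n_{\w_k}=N$ forces $r\leq N$, matching the outer sum in~\eqref{eq:cycle_expansion}. Since the alphabet is finite, for every fixed $N$ there are only finitely many admissible tuples, so the coefficient of $z^N$ is a finite sum: precisely the bracketed expression in the proposition.

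It remains to argue absolute convergence on all of $\C$. Theorem~\ref{thm:factorizationZeta} already guarantees that $d_V^\chi$ is entire, so its Taylor series at $0$ has infinite radius of convergence. The manipulations of the previous two paragraphs are carried out in the disk on which the double sum in the exponent converges absolutely, and they identify the $N$-th Taylor coefficient of $d_V^\chi$ with the finite sum appearing in~\eqref{eq:cycle_expansion}. Uniqueness of Taylor coefficients then propagates the identity, with absolute convergence, to all of $\C$. The main technical subtlety, rather than a genuine obstacle, is justifying the interchange of summations when expanding the exponential of a triple series; this is routine inside the region of absolute convergence of the exponent, and analytic continuation takes care of the rest.
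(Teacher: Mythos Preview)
Your proposal is correct and follows essentially the same route as the paper's proof: start from the exponential representation established in the proof of Theorem~\ref{thm:factorizationZeta} (equation~\eqref{eq:d_chi_exp}), apply Lemma~\ref{lem:dynamical_quantities_wmw} and sum the geometric series in $k$ to recognize the exponent as $-\sum_{[\w],l} z^{n_\w l}T^\chi_{[\w],l}$, then expand the exponential and regroup by powers of $z$, invoking the entireness of $d_V^\chi$ from Theorem~\ref{thm:factorizationZeta} for global convergence. Your write-up is in fact somewhat more explicit than the paper's about the combinatorial regrouping and the analytic continuation step, but the argument is the same.
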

\begin{rem}
 For the special case of the flow-adapted IFS of $X_{n_f,\psi}$ one 
 simply has to replace (\ref{eq:T_chi_w_l}) by
 \begin{equation}\label{eq:T_chi_w_l_Schottky}
 T_{[\w],l}^\chi(s):= \frac{d_\chi}{l}\frac{\chi(g_{\w}^l) (\phi'_{w^{m_\w}}(u_\w))^{sl/m_\w}}{1-\phi'_{w^{m_\w}}(u_\w)^{l/m_\w}}.
 \end{equation}
\end{rem}
\begin{proof}
 From (\ref{eq:d_chi_exp}) and Lemma~\ref{lem:dynamical_quantities_wmw} 
 we obtain
 \[
   d_V^\chi(z)=\exp\left(-d_\chi \sum\limits_{[\w]\in\left[ \mathcal  W^G_{\tu{prime}}\right]}\sum\limits_{l>0} \frac{z^{n_\w l}}{l}  \frac{\Tr_{V_\chi} \left[\rho_\chi(g_\w^l)\right]\left(V_{w^{m_\w}}(u_\w)\right)^{l/m_w}}{1-(\phi_{w^{m_w}}'(u_\w))^{l/m_\w}} \right).
 \]
Using the series expression of the exponential function and reordering the 
terms with respect to powers of $z$ leads to (\ref{eq:cycle_expansion}). 
As (\ref{eq:d_chi_exp}) is absolutely convergent in a neighborhood of zero,
and as $d_V^\chi(z)$ is an entire function of $z$, the uniform convergence of 
its Taylor expansion (\ref{eq:cycle_expansion}) on any bounded set follows immediately.
\end{proof}
Equation (\ref{eq:cycle_expansion}) can then be used for numerical calculations
by truncating the series. We will denote the truncated Selberg zeta function of the 
surfaces $X_{n_f,\psi}$ by
\begin{equation}\label{eq:truncated_Selberg}
 Z_{X_{n_f,\psi}}^{(n)}(s)=\prod\limits_{\chi\in\hat G} Z_{X_{n_f,\psi}}^{\chi,(n)}(s)
\end{equation}
where
\begin{equation}\label{eq:truncated_symred_Selberg}
  Z_{X_{n_f,\psi}}^{\chi,(n)}(s)=1+\sum\limits_{N=1}^n \sum\limits_{r=1}^N\frac{(-1)^r}{r!}\sum\limits_{
  \begin{array}{c}
  {\scriptstyle [([\w_1],l_1),\ldots,([\w_r],l_r)]}\\
  {\scriptstyle l_1n_{\w_1}+\ldots+l_r n_{\w_r} = N}
  \end{array}
   }\prod\limits_{k=1}^r T^\chi_{[\w_k],l_k}(s).
\end{equation}
This truncated zeta function has been implemented using Sage \cite{Sag14}, which allows us to 
perform efficient numerical calculations using numpy and scipy \cite{Sci}, and also provides an 
interface to GAP \cite{Gap3}, which allows an automated computation of the characters
which appear in (\ref{eq:T_chi_w_l}). The main problem of these Taylor expanded
zeta functions is that the number of fixed points $u_\w$ required for the 
calculation of $Z_{X_{n_f,\psi}}^{\chi,(n)}(s)$ grows exponentially with $n$.
In order to have  a tractable numerical problem it is crucial that 
the convergence of $Z_{X_{n_f,\psi}}^{\chi,(n)}(s)$ in $n$ be rather fast.

\begin{figure}
\centering
        \includegraphics[width=\textwidth]{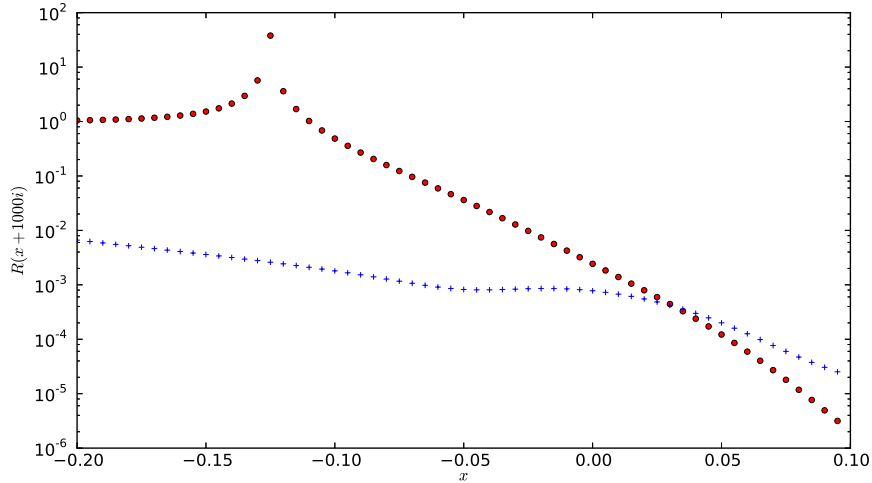}
\caption{Relative error term. The blue crosses represent $R_6(x+1000i)$,
calculated with the truncated symmetry factorized zeta function 
(\ref{eq:truncated_Selberg}). The red points represent $R_{11}(x+1000i)$
for the truncated zeta function without symmetry reduction, as used in \cite{Bor14}.}
\label{fig:error}
\end{figure}
It has been observed that the convergence rate depends both on the parameters of the Schottky 
surface and also on the complex parameter $s$ \cite{Bor14, JP02}.  The convergence rate depends very strongly on 
$\re(s)$ and very weekly on $\im(s)$.
As in \cite{Bor14}, we can use the relative error term,
\[
 R_{n}(s):=\frac{|Z_X^{(n-1)}(s)-Z_X^{(n)}(s)|}{Z_X^{(n)}(s)},
\]
to compare convergence rates.  Figure~\ref{fig:error}
shows a comparison of relative error terms for the surface
$X_{3,0.1723}$ which corresponds to a 3-funneled Schottky surface 
with funnel-width $\ell =12$.  We compare the error term obtained by the
symmetry factorized zeta function (\ref{eq:truncated_Selberg})
of order 6 (blue crosses) with the non-reduced zeta function as used in 
\cite{Bor14,JP02} of order 11 (red dots).  Even though we use a much smaller order 
for the approximation of the symmetry factorized zeta function, the relative 
error term is significantly smaller for most $s$ values.  Especially for 
$\re(s)<0$, the advantage of the symmetry factorized zeta function
becomes very dramatic.  If one requires a relative accuracy of $10^{-2}$ the 
non-reduced zeta function of order 11 allows the calculation of 
the zeta function only up to $\re(s)\approx 0$ while
the symmetry factorized zeta function of order 6 already allows a calculation
up to $\re(s)\approx -0.2$.  The benefit of the symmetry reduction
becomes even clearer if one considers how many periodic orbits $u_\w$ have 
to be calculated in the two cases. For the non-reduced zeta function of 
order 11 one needs more then 170000 periodic orbits (c.f. \cite[Table~1]{Bor14})
the symmetry-reduced zeta function of order 6, however, requires only the 
calculation of 41 periodic orbits. 

\begin{figure}
\centering
        \includegraphics[width=0.48\textwidth]{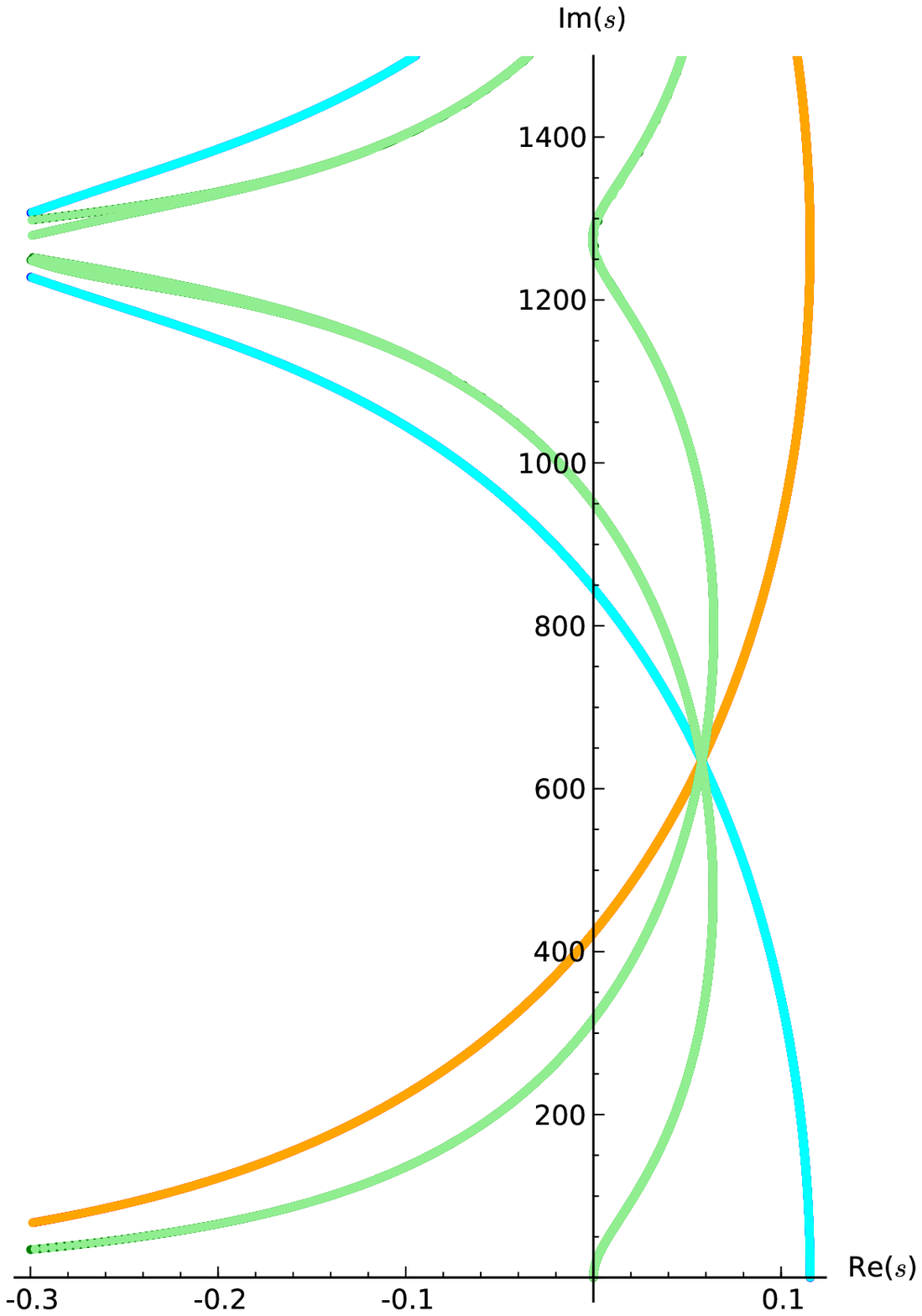}
        \includegraphics[width=0.48\textwidth]{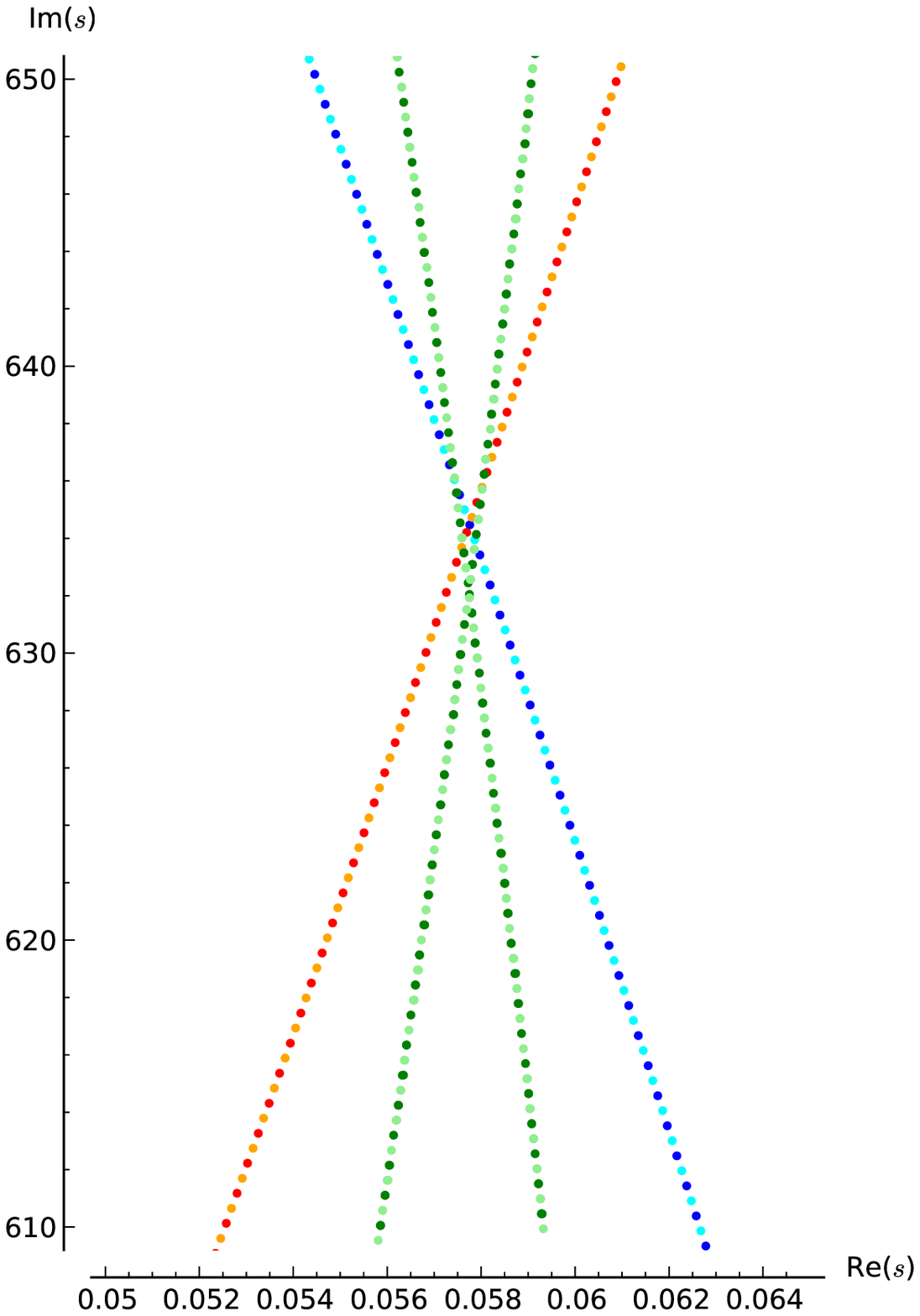}
\caption{Resonance spectrum for the surface $X_{3, 0.1723}$. The different colors
correspond to the different representations: $I_1$(dark blue), $I_2$(light blue), $II_1$(red), 
$II_2$(orange), $III_1$(dark green), and $III_2$(light green).
In the left plot the resonances are so dense that they can not be distinguished 
but appear as continuous line. The right plot shows a zoom into the region
of the first crossing of the chains. Here one can distinguish the individual 
resonances and it becomes evident that within each chain the resonances
come from an alternating pair of representations.}
\label{fig:sr12}
\end{figure}
This gain of efficiency can be used to calculate resonances in much larger
domains.  For example, Figure~\ref{fig:sr12} shows the resonance spectrum for the 
surface $X_{3,0.1723}$.  Without symmetry reduction the numerical accessible
resonance range was restricted \cite{Bor14} to $\re(s) \gtrsim 0$.  The symmetry
reduction allows us to calculate the resonances easily up to $\re(s)=-0.3$, increasing the width 
of the resonance strip by a factor of $4$.  

Another significant benefit of the symmetry factorization is that it provides additional information on the resonance spectrum.  
The factorization (\ref{eq:fac_sel_zeta}) allows us to associate the zeros of the Selberg zeta function
to specific unitary irreducible representations of the symmetry group.  As discussed above, the symmetry
group of the symmetric 3-funneled surface is given by $D_3\times \Z_2$.  Via its
action on the symbols, this group can be realized as a permutation group on 6 elements.
One then calculates that the group has 6 conjugacy classes and thus 6 irreducible representations.
The character table is given in Table~\ref{tab:character_3_funnel}.  As Figure~\ref{fig:sr12} illustrates, 
the resonance-chain structure corresponds with the symmetry reductions.  However, one 
chain does not correspond to one only representation, as we might have expected,
but rather to a pair of representations. 
The resonances on each chain alternate between the two corresponding representations.
Intuitively this alternating behavior can be understood as follows: According to
Definition~\ref{def:sym_n_funnel_surface} all the symmetric
$n$-funneled Schottky surfaces consist of two copies of $\tilde {\mathcal  S}\subset \mathbb D$
that are glued together along the geodesic boundary, so the surfaces are symmetric 
with respect to the reflections along the plane in which the two copies are glued together
and each resonant state is either symmetric or antisymmetric with respect to this reflection.
Those states which are antisymmetric must vanish at the boundaries of 
$\tilde {\mathcal S}$ and can thus be considered as resonant states of the 
open hyperbolic billiard $\tilde{\mathcal S}$ with Dirichlet boundary conditions. 
Those states that are symmetric can be seen as resonant states of the 
hyperbolic billiard with Neumann boundary conditions. 
Looking at the character table (Table~\ref{tab:character_3_funnel}), we see that 
the two representations on each chain differ exactly by their symmetry 
with respect to the reflection on the gluing plane which is represented by 
the permutation $(1,4)(2,5)(3,6)$. Each chain
thus corresponds to one specific symmetry type of the hyperbolic
billard $\tilde{\mathcal S}$ and the alternating behavior comes from 
switching between Dirichlet and von-Neumann boundary conditions. The same phenomenom
is observed in the case of the symmetric 4-funneled surface (Figure~\ref{fig:4f_13})
as well as for the non-symmetric 3-funneled surface (Figure~\ref{fig:77p01}). Note 
that this observation also fits the findings in \cite{phys_art, wei14}, where 
it has been shown that the chain structure is determined by the ratio of the 
periodic orbit lengths. For the Schottky surfaces considered here this ratio is 
already fully determined by the geometry of the hyperbolic billiards $\tilde{\mathcal S}$,
so we also expect the chain structure to be determined by one copy of $\tilde{\mathcal S}$.
The fact of gluing two copies of $\tilde{\mathcal S}$ together only doubles the length 
of all closed geodesics and and thus doubles the number of resonances on the chains
by allowing them to alternate between symmetric 
and antisymmetric types.
\begin{table}
\centering
\begin{tabular}{c|c|c|c|c|c|c}
\hline
&()& (2,3)(5,6)&(1,2,3)(4,5,6)& (1,4)(2,5)(3,6)& (1,4)(2,6)(3,5)&
(1,5,3,4,2,6)\\
\hline
$I_1$&1&1&1&1&1&1\\
\hline
$I_2$&1&1&1&-1&-1&-1\\
\hline
$II_1$&1 &-1&  1&  1& -1&  1\\
\hline
$II_2$&1 &-1&  1& -1&  1& -1\\
\hline
$III_1$&2 & 0 &-1& -2&  0&  1\\
\hline
$III_2$&2 & 0 &-1&  2&  0& -1\\
\end{tabular}
\caption{Character table of the symmetry group $D_3\times \Z_2$ of the 
symmetric 3-funneled surfaces $X_{3,\psi}$. In the first line the representatives
of the conjugacy classes are given in cycle notation, where $D_3\times \Z_2$ 
is realized as a permutation group on the symbols of the flow-adapted IFS. 
The following six lines represent the characters of the six unitary irreducible
representations of this group.
}
\label{tab:character_3_funnel}
\end{table}

\begin{figure}
\centering
        \includegraphics[width=0.48\textwidth]{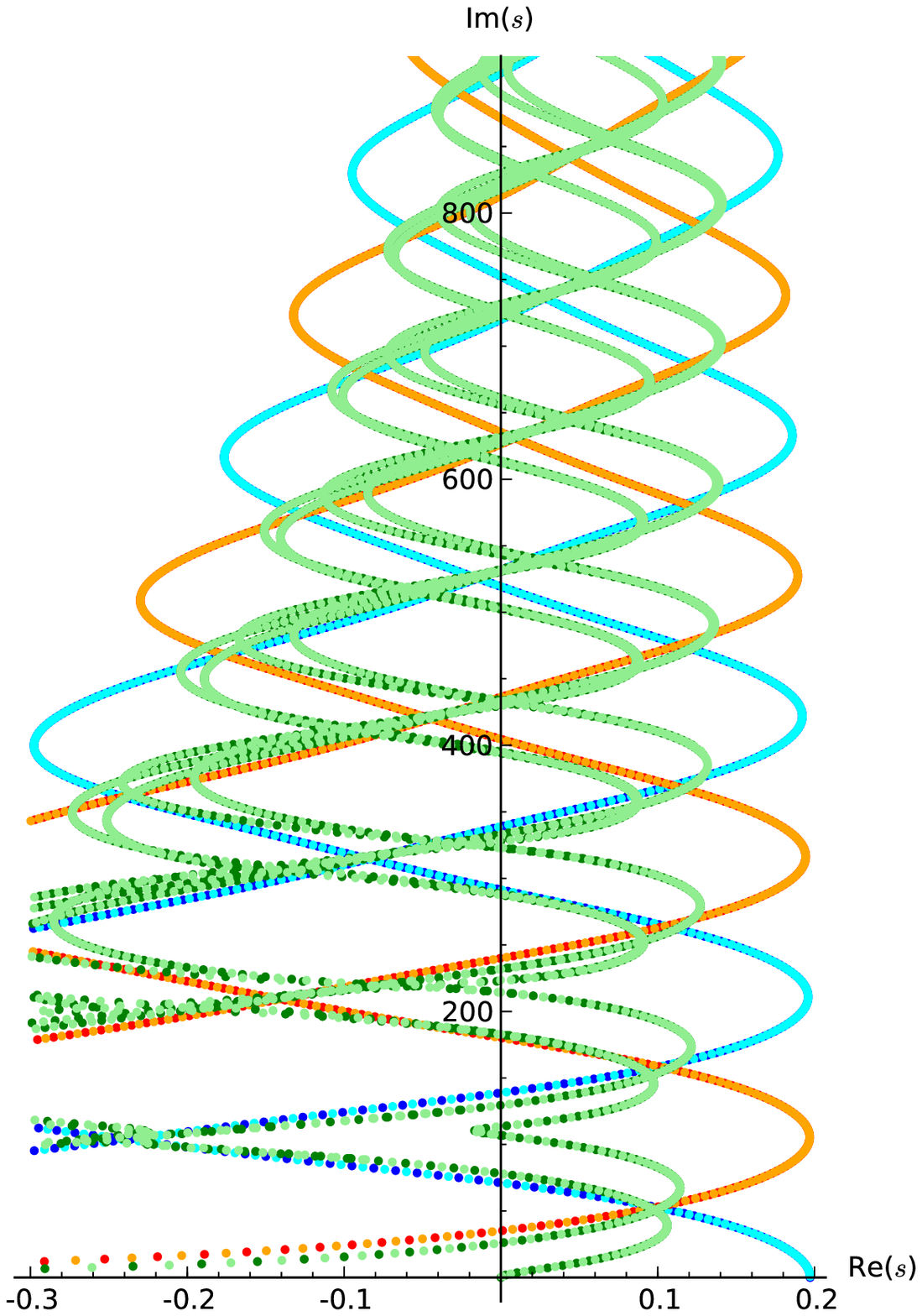}
        \includegraphics[width=0.48\textwidth]{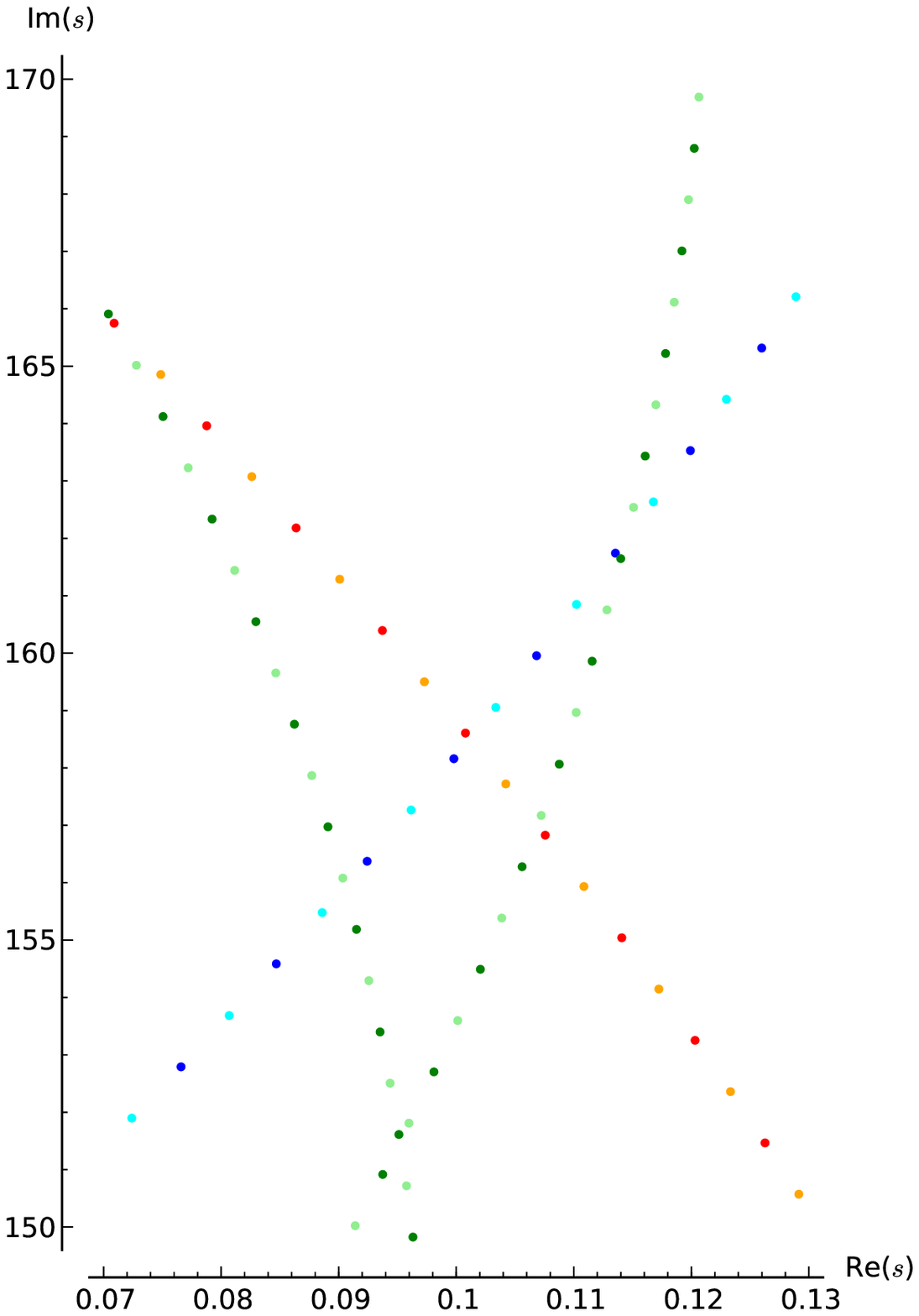}
\caption{Resonance spectrum for the surface $X_{3, 0.5930}$. The color code
is the same as in Figure~\ref{fig:sr12}. The right plot is
a zoom into the region of the second crossing of all three 
chain types. As the resonances on $X_{3, 0.5930}$ are less dense than
on $X_{3, 0.1723}$,  individual resonances can be distinguished in some parts of the left plot.}
\label{fig:sr7}
\end{figure}
\begin{table}
\scriptsize
 \begin{tabular}{p{0.3cm}|p{0.3cm}|p{0.95cm}|p{1cm}|p{0.8cm}|p{1cm}|p{1cm}|p{1cm}|p{1cm}|p{0.8cm}|p{1cm}}
  &()&(2,4)(6,8)&(1,2)(3,4) (5,6)(7,8)&(1,2,3,4) (5,6,7,8)&
(1,3)(2,4) (5,7)(6,8)& (1,5)(2,6) (3,7)(4,8)& (1,5)(2,8) (3,7)(4,6)&
(1,6)(2,5) (3,8)(4,7)& (1,6,3,8)  (2,7,4,5)& (1,7)(2,8) (3,5)(4,6)\\
\hline
$I_1$& 1 & 1 & 1 & 1 & 1 & 1 & 1 & 1 & 1 & 1\\
\hline
$I_2$& 1 & 1 & 1 & 1 & 1 &-1 &-1 &-1& -1 &-1\\
\hline
$II_1$& 1 & 1 &-1 &-1 & 1 & 1 & 1 &-1 &-1 & 1\\
\hline
$II_2$& 1 & 1 &-1 &-1 & 1 &-1 &-1 & 1 & 1 &-1\\
\hline
$III_1$& 1 &-1 &-1 & 1 & 1 &-1 & 1 & 1 &-1 &-1\\
\hline
$III_2$& 1 &-1 &-1 & 1 & 1 & 1 &-1 &-1 & 1 & 1\\
\hline
$IV_1$& 1 &-1 & 1 &-1 & 1 &-1 & 1 &-1 & 1 &-1\\
\hline
$IV_2$& 1 &-1 & 1 &-1 & 1 & 1 &-1 & 1 &-1 & 1\\
\hline
$V_1$& 2 & 0 & 0 & 0 &-2 & 2 & 0 & 0 & 0 &-2\\
\hline
$V_2$& 2 & 0 & 0 & 0 &-2 &-2 & 0 & 0 & 0 & 2\\
\hline
 \end{tabular}
\caption{Character table of the symmetry group $D_4\times \Z_2$ of the  
symmetric 4-funneled surfaces $X_{4,\psi}$. In the first line the representatives
of the conjugacy classes are given in cycle notation, where $D_4\times \Z_2$ 
is realized as a permutation group on the symbols of the flow-adapted IFS. 
The following six lines represent the characters of the ten unitary irreducible
representations of this group.}
\label{tab:character_4_funnel}
\end{table}
\begin{figure}
\centering
        \includegraphics[width=0.9\textwidth]{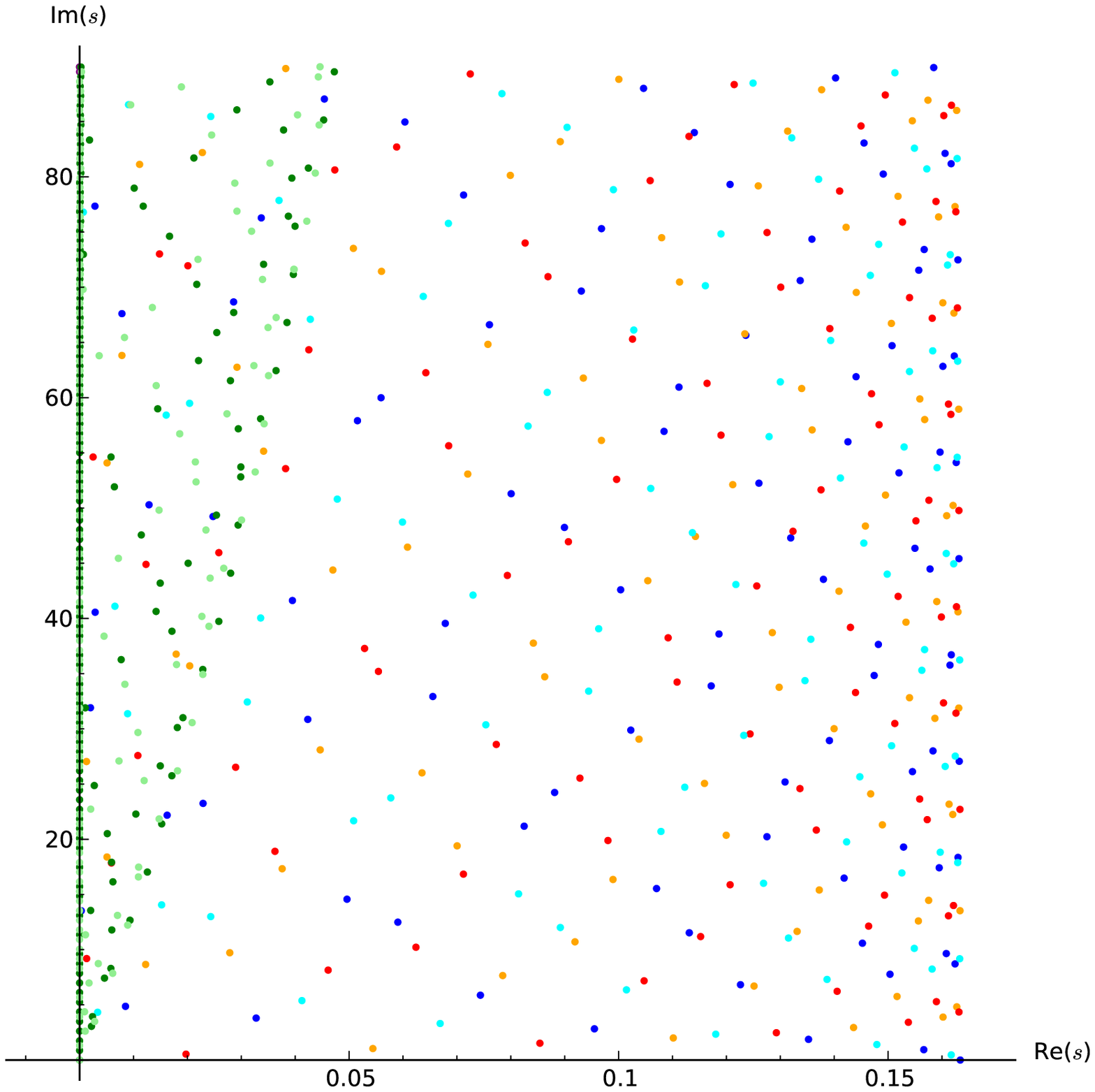}
\caption{
Resonance spectrum for the surface $X_{4, 0.1010}$. Resonances of 
different unitary irreducible representations (cf. Table~\ref{tab:character_4_funnel})
are plotted in different colors:  $I_1$(dark blue), 
$I_2$(light blue), $II_1$(red), $II_2$(orange), $V_1$(dark green) and 
$V_2$(light green). There were no resonances found in the plot regions from 
the representations $III_1, III_2, IV_1$ and $IV_2$.
}
\label{fig:4f_13}
\end{figure}

Finally, the symmetry decomposition enables us to study the resonance structure 
of surfaces which were previously not treatable numerically.  As an example, we show the 
resonance structure of the 3-funneled surface $X_{3,0.5930}$ (Figure~\ref{fig:sr7}) which 
corresponds to a funnel-width of 7 and the 4-funneled surface $X_{4,0.1010}$
(Figure~\ref{fig:4f_13}) which corresponds to a funnel-width of 13. For
the 3-funneled surface one observes again 
resonance chains on a large $\im(s)$ range, where each chain is composed 
of resonances belonging to two representations. As expected from the
observations in \cite{phys_art}, these chains have a much stronger curvature in comparison to the resonance chains
for the surface $X_{3,0.1723}$. For the 4-funneled surface
there are no resonance chains visible on a comparable scale to the 
3-funneled surfaces. Only if one zooms in strongly on the 
$\im(s)$-scale and colors the resonances according to the different representations
can one see strongly curved chains that are again each composed of contributions from two 
different representations.  This different behavior between symmetric 
3- and 4-funneled surfaces has been predicted in \cite{phys_art}, because
4-funneled surface do not have naturally a strong clustering behavior in their 
primitive length spectrum.  A surprising feature, however, is that there is 
one very stable resonance chain along the imaginary axis 
related to the two 2-dimensional representations $V_1$ and $V_2$.

As we noted in Example~\ref{exmpl:3funnelSchottky}, in the case $X_{l_1,l_1,l_3}$ with $l_1 \ne l_3$, the symmetry group is 
$\Z_2\times \Z_2$, which has four one-dimensional irreducible representations, with the character table shown in 
Table \ref{tab:character_klein_four}.  Proposition~\ref{cycle_prop} also applies in this
case, and the improvement in convergence properties for the reduced zeta function is impressive, 
even with this much smaller symmetry group.   
The error term with $n=6$ in this case is actually comparable to the corresponding error term for the full $D_3 \times \Z_2$ reduction
shown in Figure~\ref{fig:error}.  In the $\Z_2\times \Z_2$ case, to calculate the symmetry-reduced zeta function up to $n=6$ 
requires a calculation of 196 periodic orbits, as opposed to 41 for the larger symmetry group.  The gain in efficiency over the unreduced case is still very significant.  

Figure~\ref{fig:77p01} shows a companion plot to Figure~\ref{fig:sr7}, where the symmetry is broken by perturbing $l_3$ from $7$ to $7.01$.  
\begin{figure}
\centering
        \includegraphics[width=0.48\textwidth]{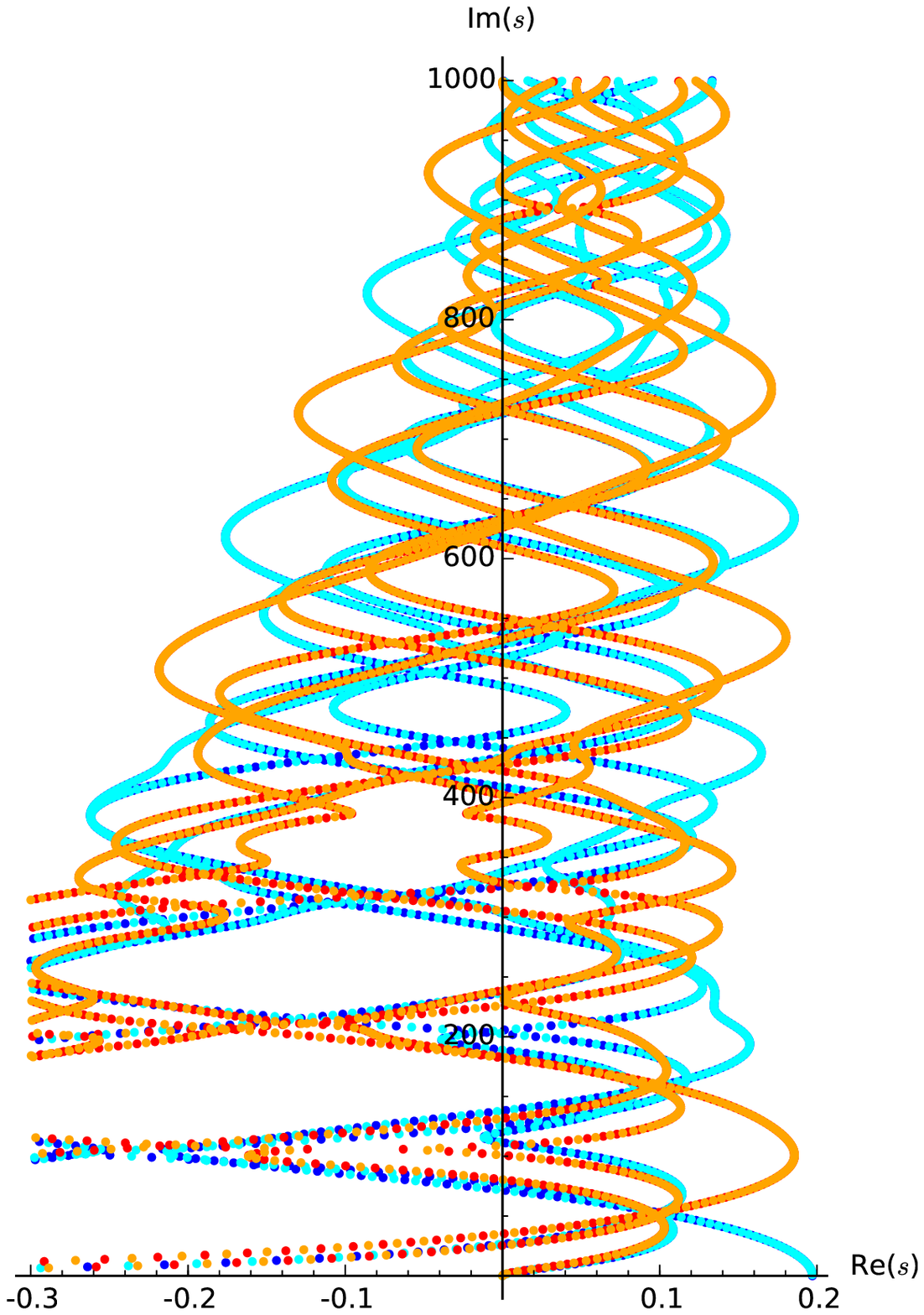}
        \includegraphics[width=0.48\textwidth]{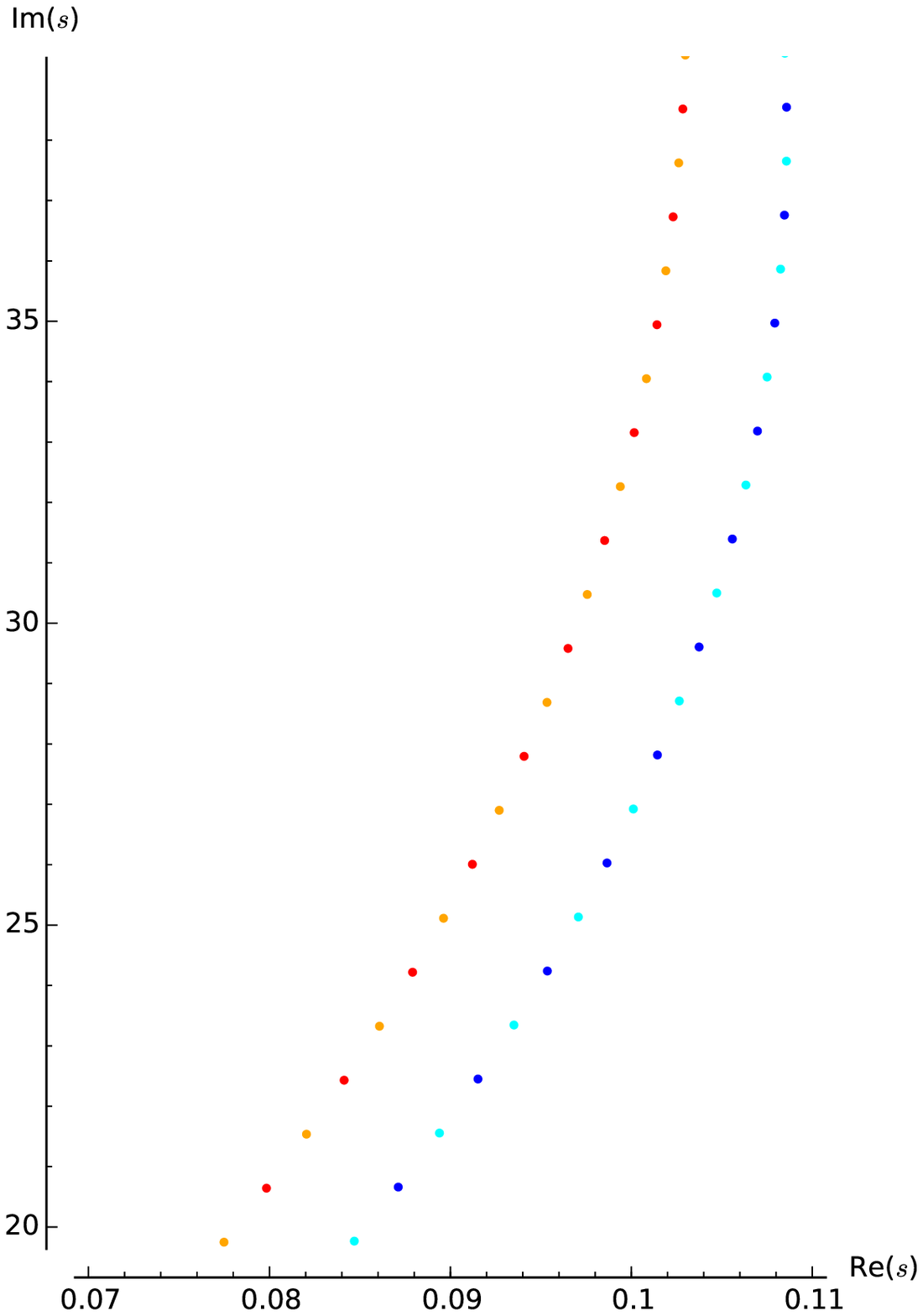}
\caption{Resonance spectrum for the surface $X_{7,7,7.01}$, which carries a 
$\Z_2 \times \Z_2$ symmetry group. The different colors
correspond to the different representations: $A$(dark blue), $B$(light blue), $C$(red), 
$D$(orange) (cf.~Table~\ref{tab:character_klein_four}). Again the alternating
representations differ in their symmetry w.r.t.~reflections along the plane 
spanned by the three funnels.
}
\label{fig:77p01}
\end{figure}

\begin{table}
\begin{center}
 \begin{tabular}[t]{c|c|c|c|c}
\hline
&()& (1,2)(3,4)&(1,3)(2,4)& (1,4)(2,3)\\
\hline
A& 1 &1 &1 &1\\
\hline
B& 1& 1& -1& -1\\
\hline
C& 1 &-1&  1&  1\\
\hline
D& 1 &-1&  -1& 1\\
\end{tabular}
\end{center}
\caption{Character table of the symmetry group $\Z_2\times \Z_2$ of the  
surface $X_{7,7,7.01}$.}
\label{tab:character_klein_four}
\end{table}

\subsection{Numerical investigations of the spectral gap}\label{sec:gap}
As illustrated in the previous subsection the symmetry factorization of the 
zeta function allows the numerical calculation of the resonance structure 
on Schottky surfaces that were previously not accessible. In this subsection
we will use these convergence improvements in order to investigate
the parametric dependence of the spectral gap numerically.

Let us first recall the notion of a spectral gap: By the work of Patterson 
\cite{Pat76,Pat88}, it is known that the resonance with the largest real 
part is always located at the critical exponent $\delta$ and that all
other resonances $s\in \tu{Res}(X)\setminus \{ \delta \}$ satisfy $\re(s)<\delta$.
By a spectral gap we denote a positive number $\varepsilon>0$ such that 
for 
\[
 G_0(X):=\sup\{\re(s),~s\in\tu{Res}(X)\setminus \{ \delta \}\}
\]
one has $G_0(X)<\delta-\varepsilon$. From the positivity and self-adjointness
of $\Delta_X$ it follows that all resonances with $\re(s)>1/2$ lie in the
interval $(\tfrac{1}{2},1)$. Consequently, if $\delta>\tfrac{1}{2}$ the existence of such a 
gap is immediate. For $\delta\leq\tfrac{1}{2}$ the existence of such a gap has been 
shown by Naud \cite{Nau05}. 

A related notion is the asymptotic spectral gap. If we introduce for $K\geq0$,
\[
 G_K(X):=\sup\{\re(s):~s\in\tu{Res}(X)\setminus \{ \delta \} \tu{ and }|\im(s)|\geq K\},
\] 
then the asymptotic spectral gap can be defined by
\[
 G_\infty(X):=\lim_{K\to\infty} G_K(X).
\]
While up to now there is not any explicit upper bound known (see \cite{JN12} for a lower bound),
Jakobsen and Naud made the conjecture \cite{JN12}, that for convex 
co-compact groups one has
\[
 G_\infty(X)=\frac{\delta}{2}.
\]

In \cite{Bor14} the dependence of the asymptotic spectral gap on $\delta$
was examined. However, the numerically accessible resonance data
could not support the above conjecture because of the limitation to small values of $\delta$. 
Using the symmetry reduction we want to extend the $\im(s)$ range in which the resonances 
for a given surface can be calculated as well as the range of 
critical exponents $\delta$, i.e. the range of surfaces for which 
resonances can be calculated.  This will provide a more thorough study of
the spectral gap as well as the asymptotic spectral gap.

Let $X_{n_f,\psi}$ be a symmetric $n$-funneled surface.  According to 
(\ref{eq:fac_sel_zeta}) the Selberg zeta function factorizes into 
its symmetry reduced zeta functions $Z_{X_{n_f,\psi}}^ \chi(s)$. 
Beyond the convergence improvement, this symmetry factorization
also allows to study the question of spectral gap
and asymptotic spectral gap for particular irreducible representations $\chi$. 
We define,
\[
 G^\chi_K(X_{n_f,\psi}):=\sup\{\re(s):~s\in\C\setminus \{ \delta \},~ Z_{X_{n_f,\psi}}^ \chi(s)=0,~|\im(s)|>K\}
\] 
In Figure~\ref{fig:envelope_repr} we compare the dependence of the spectral gap
for the different representations for the surface $X_{3,0.3631}$ (which corresponds
to a surface where the shortest geodesics have lengths equal to 9). To be more
precise, Figure~\ref{fig:envelope_repr} shows the resonance envelope functions,
\begin{figure}
\centering
        \includegraphics[width=0.9\textwidth]{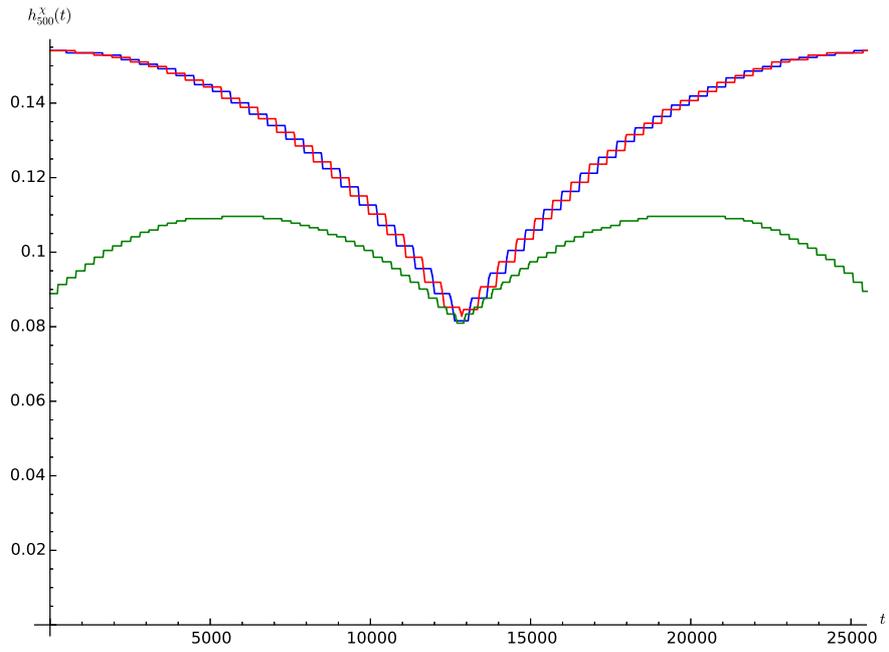}
\caption{
The plot shows the symmetry reduced envelope function $h^\chi_{500}(t)$
for the surface $X_{3,0.3631}$ and different representations. 
As for $\chi=I_1$ and $\chi=I_2$ there is 
no visible difference between these functions they are both represented
by a simple blue line. Similarly red corresponds to $II_1$ and $II_2$
and green to $III_1$ and $III_2$.
}
\label{fig:envelope_repr}
\end{figure}
\[
 h^\chi_w(t):=\max\{\re(s):~ Z_{X_{n_f,\psi}}^\chi(s) = 0,~|\im(s)-t|\leq w\}.
\]
As expected from the observation of the resonance chain structures 
(see Figure~\ref{fig:sr12} and \ref{fig:sr7}) the envelope functions
of the representations $I_1$ and $I_2$ are equal to such a good 
approximation that no difference can be seen in the plot. 
This also holds for the pairs $II_1$ and $II_2$
as well as $III_1$ and $III_2$. Additionally one observes that, while
the envelope functions of the representations $I$ and $II$ locally 
differ slightly from each other, the spectral gaps
\[
 G^\chi_{K}(X_{n_f,\psi}) = \sup\limits_{t>K+w} h^\chi_w(t)
\]
of all the one-dimensional representations are equal to each other 
up to a very good precision and additionally they are all equal
to the spectral gap of the non-reduced system. Only the two-dimensional
representations seem to lead to different spectral gaps. The same observation
has been made for all other surfaces that we have examined. We therefore 
conjecture, that for the determination of the asymptotic
spectral gap it is enough to study the asymptotic spectral gap of the
trivial representation. 

Besides the numerical observations, this conjecture
is supported by the following heuristic arguments.  Morally, the symmetry
reduced zeta function associated to the trivial representation corresponds
to the Selberg zeta function of a hyperbolic billiard of the symmetry reduced
fundamental domain with Neumann boundary conditions. The question of
explicit bounds on the asymptotic spectral gap on convex co-compact surfaces 
can also be interpreted in a more general context of open quantum systems 
with a fractal trapped set as an improvement of the known topological 
pressure bounds (c.f. \cite[Section 8.2]{Non11}). If such a general 
improvement of these spectral gap bounds exists, then it should of course
be also visible for all symmetry reduced zeta functions that can be interpreted
as hyperbolic billiards with certain boundary conditions.  Thus, in 
particular, it should hold also for the symmetry reduction with respect to the trivial representation.
For this reason, we will from now on focus on the symmetry reduced
spectral gap of the trivial representation $G^{I_1}_K(X_{n_f,\psi})$ 
in more detail.

\begin{figure}
\centering
        \includegraphics[width=0.9\textwidth]{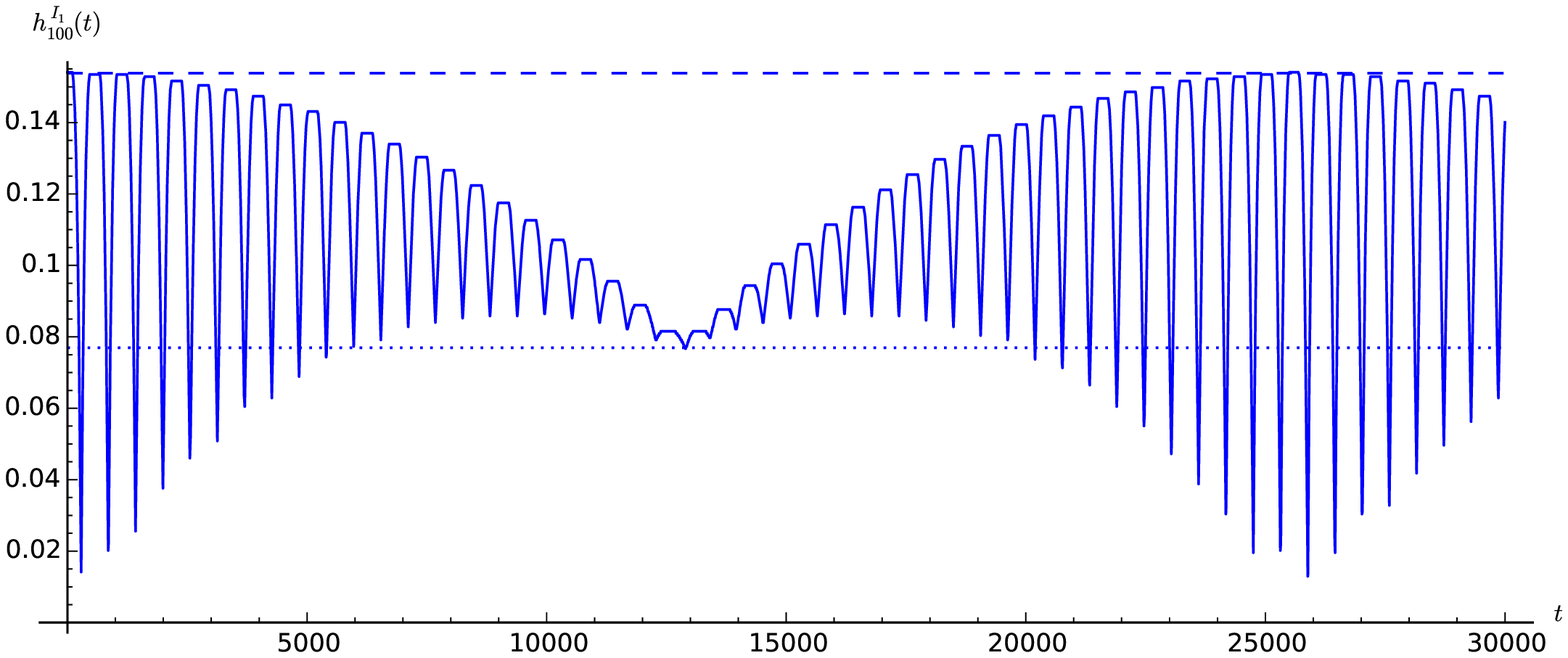}
        \includegraphics[width=0.9\textwidth]{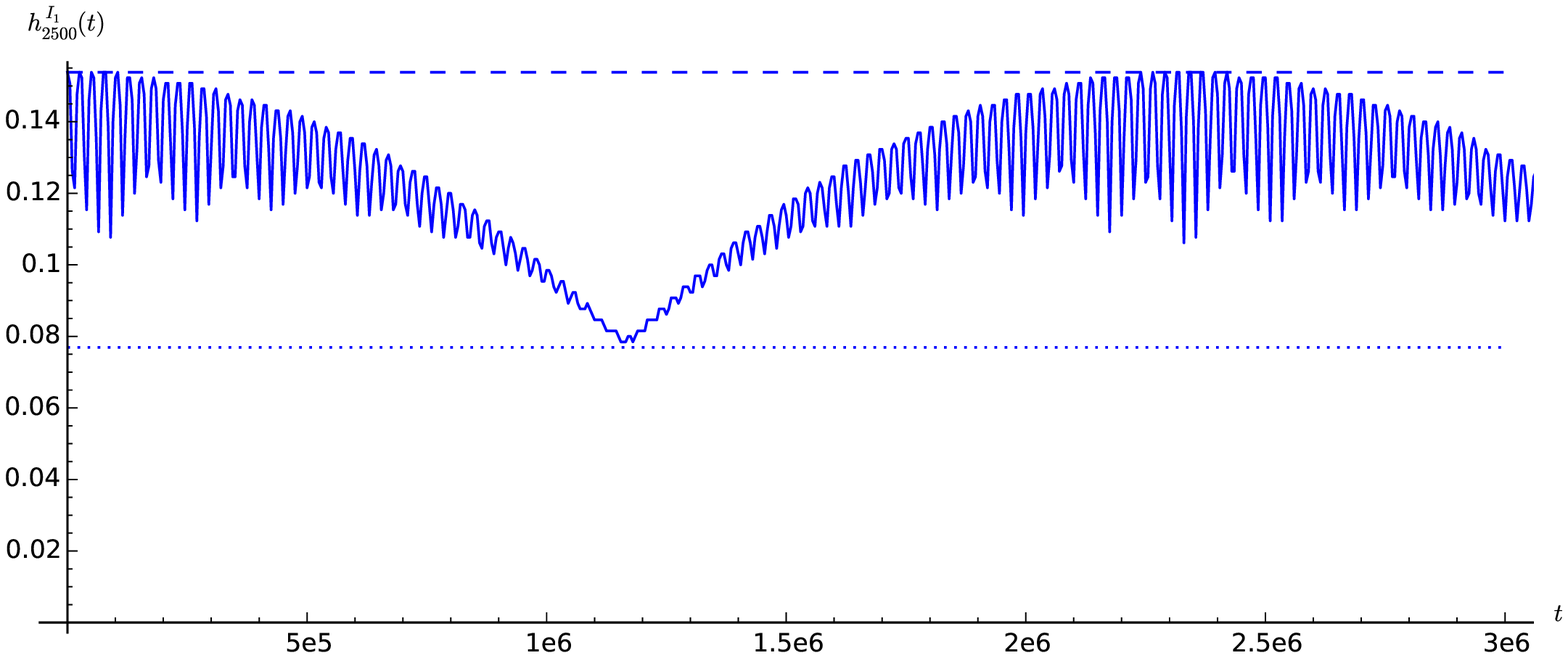}
\caption{
Envelope function $h^{I_1}_w(t)$ for the surface $X_{3,0.3631}$ on different scales.
In the upper plot we have taken $w=100$ in the lower plot $w=2500$.
}
\label{fig:envelope_scale}
\end{figure}
Both plots in Figure~\ref{fig:envelope_scale} show the envelope function 
of the surface $X_{3,0.3631}$ for the trivial representation but on 
different scales. In the upper plot one sees, that the envelope function
$h_{100}(t)$ shows a beating structure. The oscillations correspond 
to those that have been observed in \cite[Figure 22]{Bor14}, however one observes
that there is a revival of the amplitudes at about $t=25000$, where the 
envelope function nearly reaches $\delta$ again. On the lower plot 
in figure~ \ref{fig:envelope_scale} we see the envelope function $h_w(t)$ but 
now for a different window width $w=2500$ and on a $t$-range which is
two orders of magnitude larger. The envelope function again oscillates
and the amplitudes show again a nearly periodic modulation. However, now one
oscillation of the envelope function in the lower plot corresponds
to the modulation of the amplitudes in the upper plot. The beating structure thus repeats 
at different scales. A convergence of
the asymptotic spectral gap towards the conjecture of $\delta/2$ 
can not be observed. However, the value of $\delta/2$ seems to have an 
importance as it is on both scales the turning point from where the amplitude 
oscillations start to grow again. Figure~\ref{fig:envelope_4f10} 
shows  that these oscillating envelope functions
are also not only an artifact of the 3-funneled Schottky surfaces, that
show a particular strong clustering in the length spectrum 
(cf. \cite{wei14}), but that they also occur for 4-funneled surfaces. 
\begin{figure}
\centering
        \includegraphics[width=0.9\textwidth]{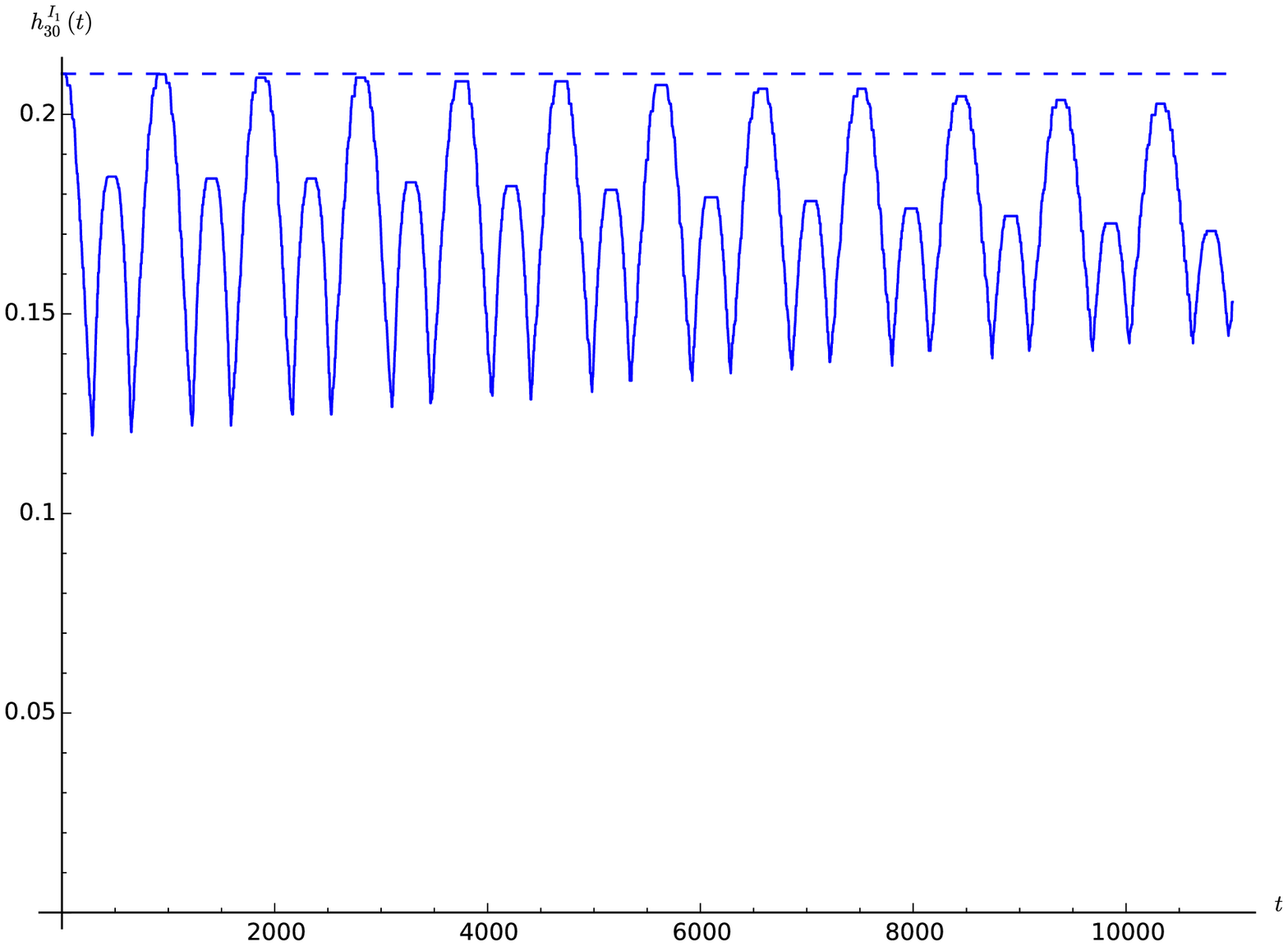}
\caption{
Envelope function $h^{I_1}_{30}(t)$ for the 4-funneled surface $X_{4,0.2311}$.
The dashed line represents the critical exponent $\delta$.
}
\label{fig:envelope_4f10}
\end{figure}

These oscillations of the envelope function make it difficult to extract 
reliable information on the asymptotic spectral gap from numerical data. 
Even if the envelope function has decreased to a certain value within the
numerically accessible range, one cannot rule out large-scale oscillations that would return it to higher values. 

We nevertheless want to examine the parametric dependence of the asymptotic 
spectral gap numerically.  In particular, we wish to examine the dependence of $G_K(X)$ on the critical
exponent $\delta$ for 3- and 4-funneled surfaces. In order to avoid effects 
that come from the finite range of numerically accessible resonances we make
sure that $K\ll \max_{\im}$, where $\max_{\im}$ is the maximum of 
imaginary parts of the accessible resonances. Figure~\ref{fig:asym_gap} 
shows $G^{I_1}_0(X)$ and $G^{I_1}_{100}(X)$ for different 3- and 
4-funneled surfaces in dependence of the critical exponent $\delta$. As expected
from the oscillation of the envelope function, the values of 
$G^{I_1}_0(X)$ and $G^{I_1}_{100}(X)$ are very similar for all surfaces.
We also checked, that this doesn't change if one goes to higher values of $K$
provided that $K\ll \max_{\im}$ is fulfilled. For strongly open 
surfaces with $\delta\lesssim0.3$ there is no
visible macroscopic gap between the leading resonance at $\delta$ and the bulk of the 
resonances. This, however, changes as one goes to more closed surfaces with $\delta\approx 0.5$.
Here one sees a clear gap, and there even seems to be a universal behavior of this
gap, as the values for the 3- and 4-funneled surfaces lie on approximately the 
same line.  Note that a very similar behavior of the spectral gap has been observed in numerical
and experimental data of quantum resonances in $n$-disk systems \cite{Bar13}. 

\begin{figure}
\centering
        \includegraphics[width=0.9\textwidth]{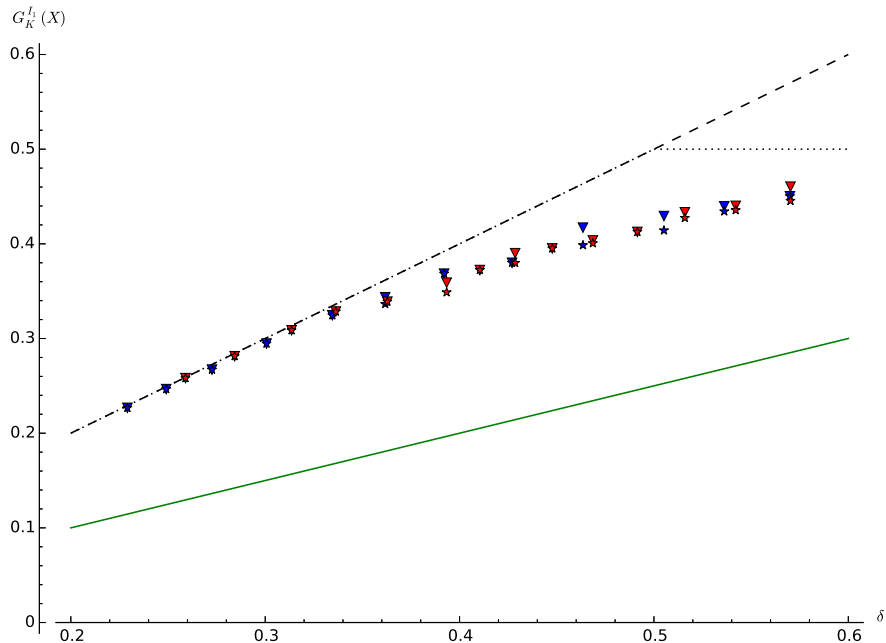}
\caption{
The plot shows the values of $G^{I_1}_0(X)$ (triangles) and 
$G^{I_1}_{100}(X)$ (stars) for different 3-funneled (blue) and 
4-funneled (red) surfaces, in dependence of the critical 
exponent $\delta$ of the surface $X$. The dashed line indicates
the value of the leading resonance which is equal to $\delta$. The 
dotted line indicates the known bounds on the asymptotic gap and
the green solid line shows the conjecture of $\delta/2$. 
}
\label{fig:asym_gap}
\end{figure}

The examples which we have presented in this subsection demonstrate that 
the symmetry reduction allows a much more detailed study of the 
spectral gaps.  The efficiency gain that results from restricting our attention to the trivial representation allows us to study
the behavior for the envelope function for higher imaginary parts and additionally
to study the spectral gap on weakly open surfaces with $\delta\approx 0.5$. 
Concerning the higher imaginary parts, we could not observe that this improves the
spectral gap significantly. We have rather observed that the oscillating behavior of
the envelope function repeats itself on different scales. On all surfaces which we
could handle numerically the asymptotic spectral gap was already determined quite well
by the resonances with low imaginary parts. The study of the weakly open surfaces with 
$\delta\approx0.5$ showed however an interesting macroscopic spectral gap which not 
only holds asymptotically but already from the second resonance on. Especially the fact
that the 3-funneled and 4-funneled surfaces behave equally, and that a similar 
behavior has also been observed for $n$-disk systems \cite{Bar13}, suggests that
there is a universal principle behind this behavior. To our knowledge
there are neither any rigorous nor any heuristic formulas known that describe 
these observations, and we consider the determination of such formulas 
as an important task.

\appendix
\section{Numerical implementation of symmetry-reduced zeta functions 
for n-funneled Schottky surfaces}\label{app:num}
In this section we will discuss some practical aspects of the numerical 
implementation of the symmetry-reduced Selberg zeta functions for symmetric
$n$-funneled Schottky surfaces.  For a 
given surface $X_{n_f,\psi}$, a given character $\chi$, and a point $s\in\C$, 
the task is to calculate the truncated Selberg zeta function 
(\ref{eq:truncated_symred_Selberg}) at a finite order $n$. This task basically 
splits into two subtasks: First one has to calculate $T^\chi_{[\w],l}(s)$
for every pair  $([\w],l)\in \left[\mathcal W^G\right]\times \N$ that 
appears in the sum. Then one has to handle the combinatorial task 
of combining these $T^\chi_{[\w],l}$ to the products and sums according
to (\ref{eq:truncated_symred_Selberg}).

By (\ref{eq:T_chi_w_l_Schottky}) the first task reduces, for a given 
$(\w,l)$, to the calculation of $\phi'_{w^{m_\w}}(u_{\w})$. By the proof of
Proposition~\ref{prop:SymIFS_orbit_geodesic_equiv} this quantity is directly 
related to the displacement length of the hyperbolic transformation 
$T(w^{m_\w})$, which was defined for a closed word in (\ref{eq:word_to_relections}).
Using the formula,
\[
  \cosh(l(T)/2) = \frac12 \left|\Tr(T)\right|,
  \]
relating the displacement length $l(T)$ to the trace of the hyperbolic
element $T\in SL(2,\R)$, we obtain
\[
 \phi'_{w^{m_\w}}(u_{\w}) = \exp\left(-2l(T(w^{m_\w}))\right)= \exp\left(-2\cosh^{-1} \left(
 \frac{|\tu{Tr}(T(w^{m_\w}))|}{2}\right)\right).
\]

The second task can be significantly simplified by using the recurrence
relation proposed in \cite[Section 7]{GLZ04}.  We can 
write (\ref{eq:truncated_symred_Selberg}) in the form
\[
 Z^{\chi,(n)}_{X_{n_f,\psi}}(s)=1+\sum\limits_{N=1}^n\sum\limits_{r=1}^N B^\chi_{N,r}(s)
\]
where
\[
 B^\chi_{N,r}(s):=\frac{1}{r!}\sum\limits_{t\in P(N,r)}\prod\limits_{k=1}^r a^\chi_{t_k}(s).
\]
Here $P(N,r)$ is the set of all $r$-partitions of $N$, i.e.~the set of all
$r$-tuples $t=(t_1,\ldots,t_r)\in \N_{>0}^r$ such that $t_1+\ldots+t_r=N$ and 
\[
 a^\chi_{t_k}(s) := -\sum\limits_{\begin{array}{c} 
                             {\scriptstyle ([\w],l)\in \left[\mathcal W^G\right] \times \N_{>0} }\\
                             { \scriptstyle n_\w\cdot l=t_k}                  
                            \end{array}
                            }T^\chi_{[\w],l}(s).
\]
To implement this strategy it is sufficient to calculate $a^ \chi_t(s)$ for all $t=1,\ldots,n$.
The coefficients $B^ \chi_{N,r}(s)$ can then be obtained by the recurrence relation,
\[
 B^\chi_{N,r}(s)=\frac{1}{r}\sum\limits_{t=1}^{N-r+1} B^\chi_{N-t,r-1}(s)\cdot a^\chi_t(s),
\]
with the start value $B^ \chi_{N,1}(s)=a^ \chi_N(s)$.

In order to calculate the coefficients $a^\chi_t(s)$ one has to determine a representative for each class 
$[\w]\in\left[\mathcal W^G\right]$ for $0<n_\w\leq n$. Note this task need only be 
performed once for all surfaces $X_{n_f,\psi}$ with a fixed number
of funnels $n_f$, so efficiency is not of the utmost importance. 
(The numerically most expensive task consists in calculating the values of $Z^{\chi,(n)}_{X_{n_f}}(s)$
several million times in order to determine its zeros at a good precision.)
Nevertheless, we want to briefly describe an elegant and fast way to determine
all such representatives. 

We define the symmetry-reduced symbolic dynamics for a $n_f$-funneled surface
to be the complete symbolic dynamics with the symbols 
\[
 \left\{\frac{-n_f-1}{2},\ldots,-1,1,\ldots,\frac{n_f-1}{2}\right\} \tu{ if } n_f \tu{ uneven},
\]
and
\[
 \left\{\frac{-n_f-2}{2},\ldots,-1,0,1,\ldots,\frac{n_f-2}{2}\right\} \tu{ if } n_f \tu{ even}.
\]
The term ``complete'' means that all sequences of symbols are allowed,
i.e.\, the adjacency matrix has the value 1 in each entry.  We denote the 
set of words of the symmetry-reduced symbolic dynamics by $\mathcal W_{sr}$. 
The idea of this symmetry-reduced coding has successfully been used in for 3- and 4-disk systems 
\cite{CE93} as well as for 5-disk systems \cite{Bar13}.  

The symmetry-reduced coding can be understood 
in the example of the 3-funneled surface as follows:  A closed geodesic can be 
represented on two copies of $\tilde S\subset\mathbb D$.  Because the two copies 
are glued together along the circles $c_i$, the geodesic alternates between these two
copies.  If it hits one circle $c_i$ it leaves again at the corresponding 
partner $c_{i+3}$ or $c_{i-3}$, respectively.  Since there is no geodesic 
in $\tilde S$ entering and leaving the same boundary circle $c_i$, the 
geodesic has either to leave the region $\tilde S$ by the next circle in 
clockwise direction or by the next circle in counterclockwise direction. 
Given a word $w_{sr}\in\mathcal W_{sr}$, which consists of a sequence of the symbols
$\{1,-1\}$, we can construct the corresponding 
representative in $\mathcal W^G$ as follows. Start at an arbitrary circle 
$c_{\tu{start}}$, with an arbitrary orientation.  At each step we proceed to the next circle in the current orientation, but then we either preserve or reverse the orientation for the next step, according to sign of the current symbol.   After passing through all symbols of the word $w_{sr}$, one ends at a circle $c_{\tu{end}}$ with a final orientation.
Now there is a unique symmetry of the surface that maps $c_{\tu{end}}$ to 
$c_{\tu{start}}$ and the final orientation to the initial orientation. We
define $g$ to be the associated group element in $D_3\times \Z_2$. Furthermore,
by collecting the indices of the circles from which the geodesic 
entered the domain $\mathcal S$, we get a word 
$w=(w_0,w_1,\ldots,w_n)\in \mathcal W$. The representative associated
to $w_{sr}$ is then exactly the pair $\w=(w,g)$. 

For an uneven number of funnels $n_f>3$, 
we must allow for the possibility to leave $\tilde S$
through the next $2,3,\ldots, (n_f-1)/2$ circles in either the clockwise or 
counterclockwise direction.  The symbols $n=1,\ldots,(n_f-1)/2$ thus correspond
to ``go $n$ steps in the current orientation and keep the orientation'', 
and the symbols $-n=-1,\ldots,-(n_f-1)/2$ correspond to ``go $n$ steps in the 
current orientation and switch the orientation for the next step''. In the even
case, one has to include also the possibility of stepping forward $n_f/2$ circles. Here
it makes no difference which orientation is taken.  This possibility is encoded
by the label $0$ and the current orientation for the further steps is not 
changed in this case. 

Via this algorithm, one can identify words in the reduced symbolic dynamics 
with elements $\w\in \mathcal W^G$. Note that the idea of the reduced symbolic 
dynamic is not to encode the absolute position of the closed geodesics, but rather to 
encode the relative changes as one moves along the geodesic. The reduced symbolic
dynamic is thus by construction compatible with the action of the symmetry group
in the following sense. If $\w$ and $\w'$ are two elements in $\mathcal W^G$ 
obtained from the same reduced word $w_{sr}$ using a different starting 
circle or orientation, then they are in the same $G$-orbit in $\mathcal W^G$ and
vice versa.  It is easy to check that the shift action on 
$\mathcal W_{sr}$ corresponds to the shift action on $\mathcal W^G$ and similarly 
for the composition of words. Thus one has identified the 
orbits of prime words under the shift action in $\mathcal W$ with the prime
elements in $\left[\mathcal W^G\right]$, which provides an easy means to generate a list
of representatives of the elements in $\left[\mathcal W^G_{\mathrm{prime}}\right]$.

Let us return to the 3-funneled surface for an illustrating example.  
The alphabet consists of two symbols $+1$ and $-1$ and accordingly there are
only the two words $(1)$ and $(-1)$ of length one. Let us 
write $\{c_i,\pm\}$ for a visit of the circle $c_i$ with positive/negative 
orientation. Starting with $c_1$ with positive orientation, the word $(1)$ 
leads to the sequence $\{c_1,+\},\{c_5,+\}$ while the word $(-1)$ leads to 
$\{c_1,+\},\{c_5,-\}$. Now the symmetry group of the 3-funneled surface 
$D_3\times\Z_2$, represented as a permutation group of the six symbols, contains
two elements that map $c_5$ to $c_1$, namely $(1,6,2,4,3,5)$ and 
$(1,5)(2,4)(3,6)$. While the first one preserves the orientation of the labels,
the second one changes them (see Figure~\ref{fig:label_permutation}). 
\begin{figure}
\centering
        \includegraphics[width=0.8\textwidth]{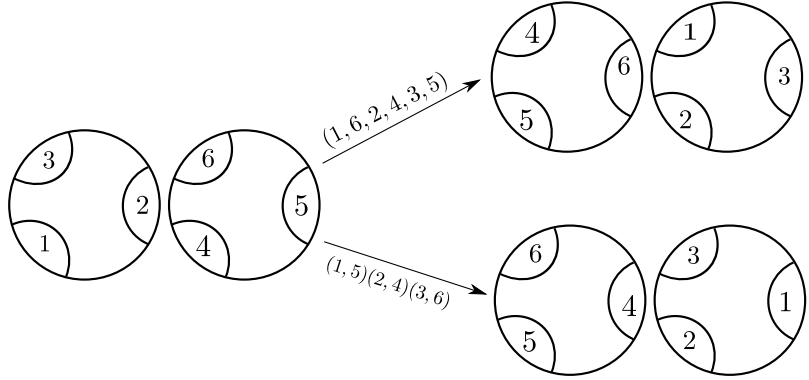}
\caption{Illustration of the label permutation of the two group elements
$(1,6,2,4,3,5)$ and $(1,5)(2,4)(3,6)$ on the two copies of $\tilde S$. While the first one 
preserves the orientation of the labels, the second group element inverts
it.}
\label{fig:label_permutation}
\end{figure}
The symmetry reduced word $(1)$ thus
corresponds to the pair $\w_{(1)}=((1,5),(1,6,2,4,3,5))\in \mathcal W^G$ while the symmetry reduced word
$(-1)$ corresponds to $\w_{(-1)}=((1,5),(1,5)(2,4)(3,6))\in \mathcal W^G$. While the multiplicity 
of the first word is $m_{\w_{(1)}} = 6$ for the second word we have
$m_{\w_{(-1)}} = 2$. The closed word $\w_{(-1)}^{m_{\w_{(-1)}}}$ then 
corresponds to a geodesic that winds one time around one of the funnels while
the closed word $\w_{(1)}^{m_{\w_{(1)}}}$ weaves around all three funnels
(see Figure~\ref{fig:sym_red_geodesics} for a sketch of the two geodesics).
\begin{figure}
\centering
        \includegraphics[width=0.49\textwidth]{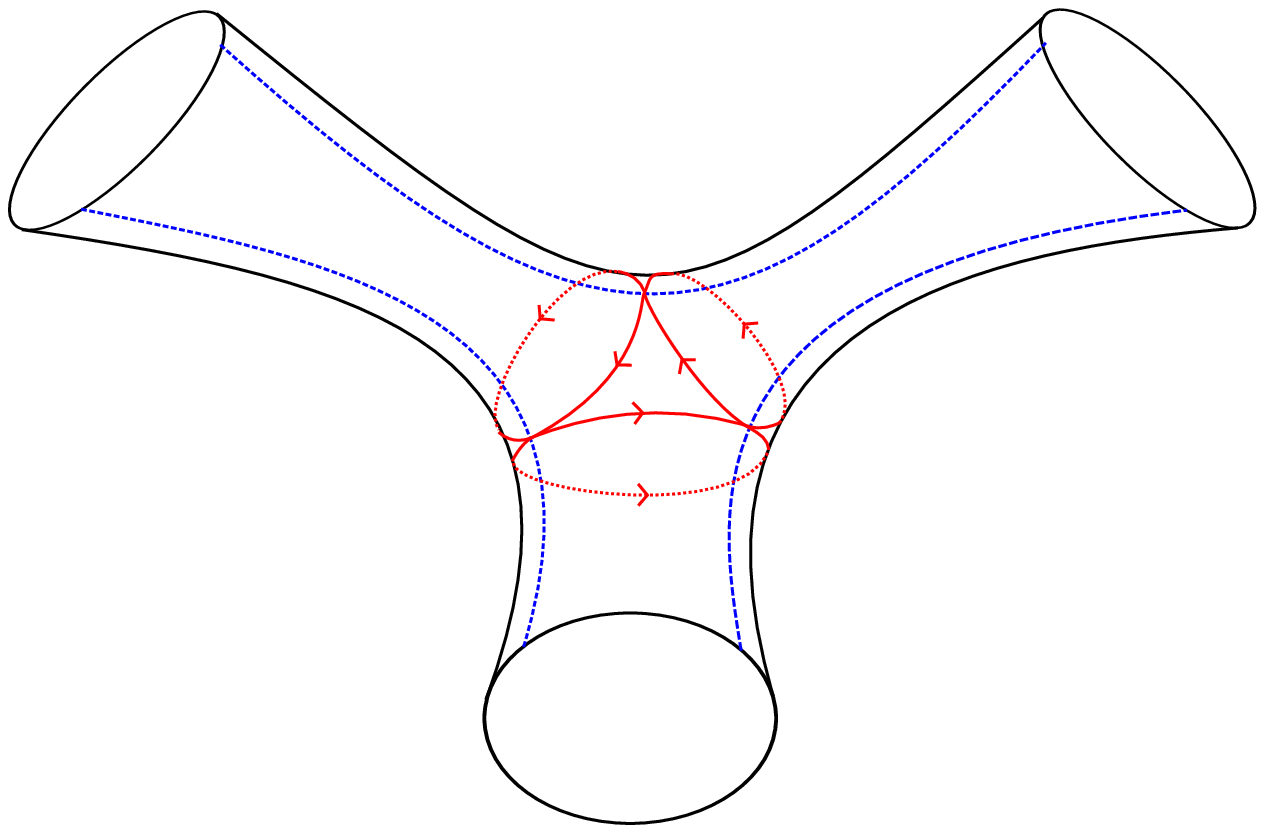}
        \includegraphics[width=0.49\textwidth]{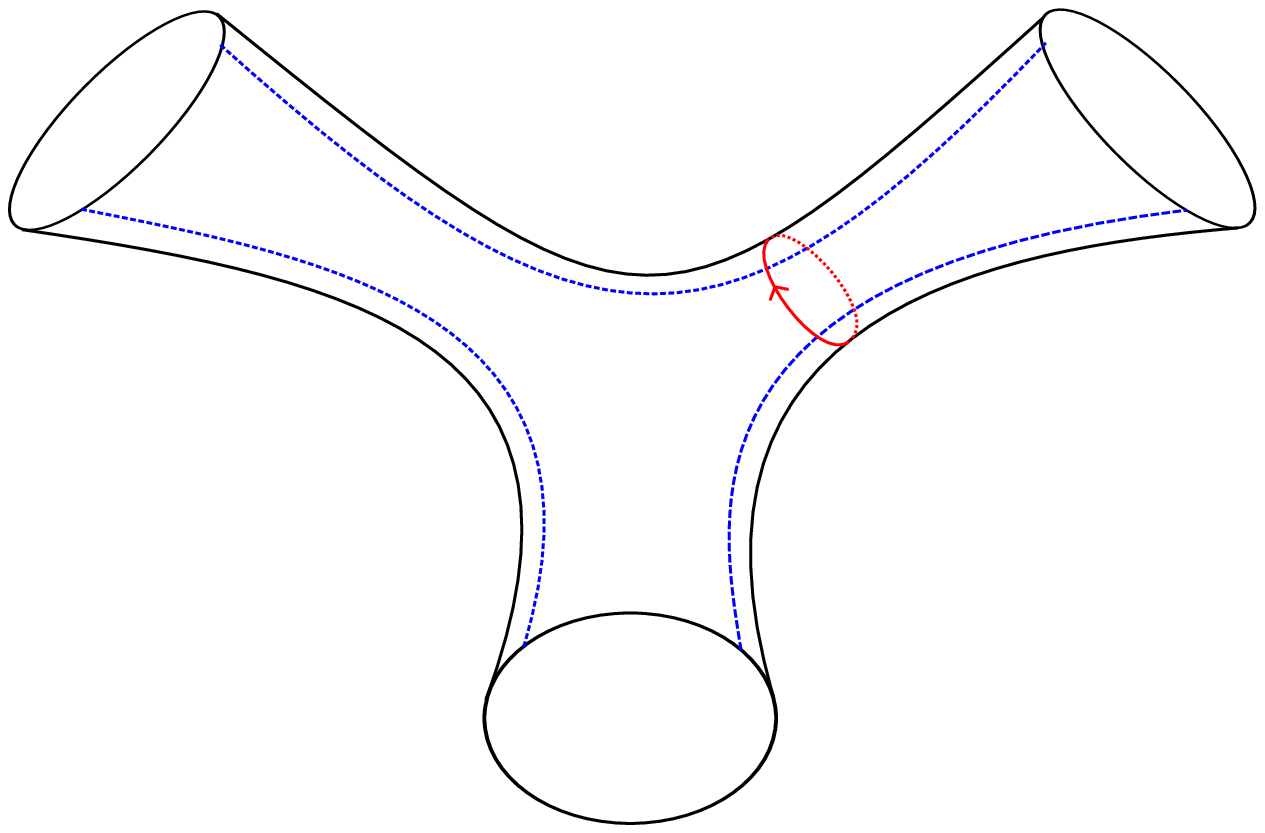}
        \includegraphics[width=0.49\textwidth]{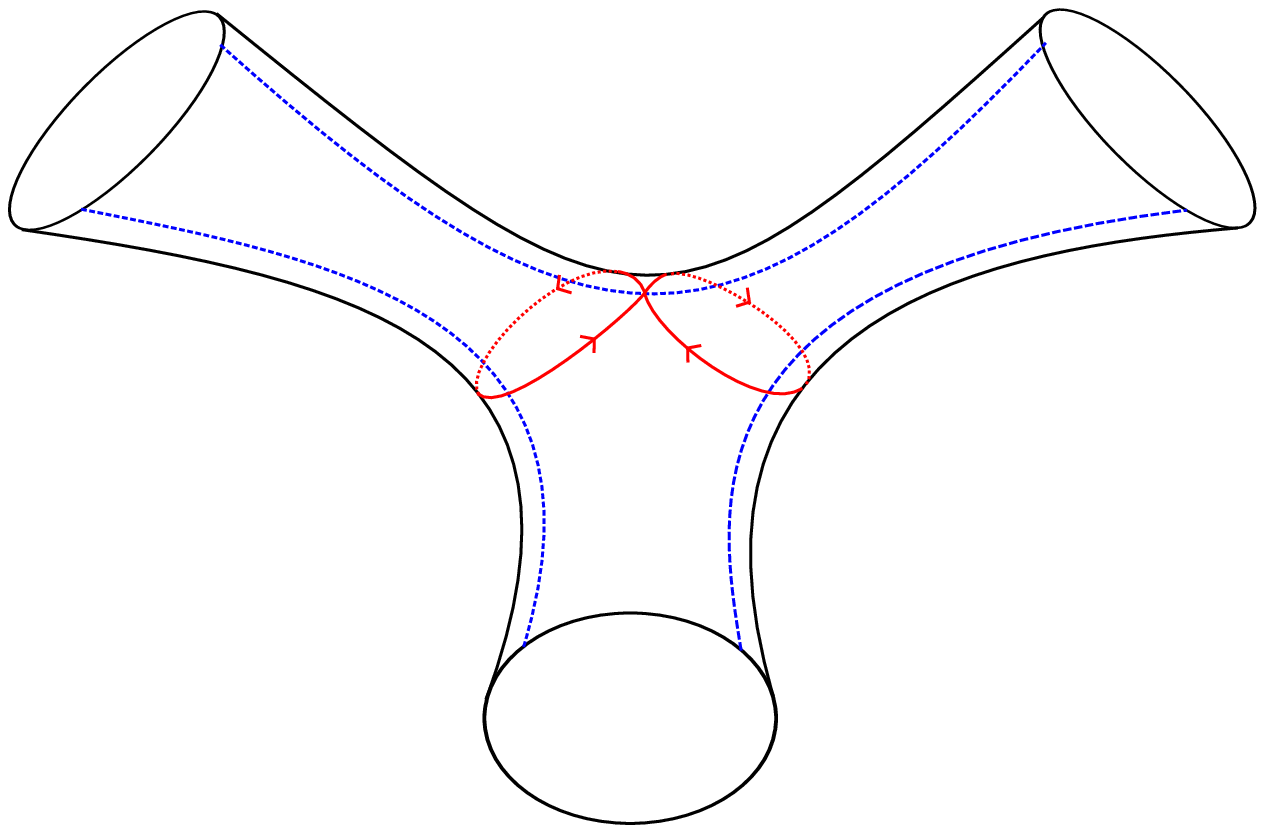}
\caption{Sketch of three geodesics appearing in the construction via the
symmetry reduced symbolic dynamics. The geodesic in the upper left 
figure belongs to the symmetry reduced word $(1)$, the upper right 
figure to the word $(-1)$ and the lower figure to $(1,-1)$. The dashed blue 
lines correspond to the cut lines along which the two copies $\tilde S$
are glued together.}
\label{fig:sym_red_geodesics}
\end{figure}

The symmetry-reduced words of length two are given by
$(1,1),(-1,-1)$, $(1,-1)$ and $(-1,1)$.  The first two elements
are not prime, and last two are related to each other by the shift action.  At length two it thus suffices 
to study the single symmetry reduced word $(1,-1)$.  Applying the algorithm yields a 
sequence $(\{c_1,+\},\{c_5,+\},\{c_3,-\})$. The corresponding closing group element
is given by $(1,3)(4,6)$ and the geodesic of the closed word $\w_{(1,-1)}^{m_{\w_{(1,-1)}}}$ winds in a figure-eight 
shape around two funnels (see Figure~\ref{fig:sym_red_geodesics}).

\section{Convergence rate estimates}

For a general holomorphic IFS we have noted that the dynamical zeta function $d_V(z)$ is an entire function of $z$ and therefore the corresponding power series
\[
d_V(z) = 1 + \sum_{n=1}^\infty d_n z^n,
\]
converges absolutely for all $z$.  In the application to Selberg zeta functions, we would like to understand the rate of convergence of this series when $z=1$.  

To estimate the coefficients (following ideas from \cite{JP02}), 
we first note that the Fredholm definition of the determinant 
$d_V(z) := \det(1 - z \mathcal{L}_V)$ allows us to write 
\[
d_n = (-1)^n \Tr (\wedge^n \mathcal{L}_V),
\]
where $\wedge^n A$ denotes the $n$-th antisymmetric tensor power of 
the operator $A$.
We can bound the $d_n$ by the trace norm of $\wedge^n \mathcal{L}_V$, 
which can be expressed in terms of the singular values of 
$\mathcal{L}_V$.   Using the Hadamard bound on 
$n\times n$ matrices with entries smaller than or equal to one, this yields the estimate,
\begin{equation}\label{dn.sing}
\abs{d_n} \le n^{n/2}\sum_{i_1<\dots<i_n} \mu_{i_1}(\mathcal{L}_V)\dots \mu_{i_n}(\mathcal{L}_V).
\end{equation}

To estimate the singular values of $\mathcal{L}_V$ is relatively straightforward.  Let us introduce an explicit orthonormal basis 
$\{\psi_n\}$ for $\mathcal{B}(D_j)$,
\[
\psi_n(z) := \sqrt{\frac{n+1}{\pi r_j^2}} \left( \frac{z - m_j}{r_j} \right)^n,
\]
where $m_j$ and $r_j$ denote the center and radius of $D_j$, respectively.  According to \eqref{LV.def}, for each 
$i \rightsquigarrow j$ the transfer operator $\mathcal{L}_V$ has a component
\[
L_{i,j} : \mathcal{B}(D_j) \to \mathcal{B}(D_i),
\]
given by
\[
L_{i,j}f(u) = V(\phi_{i,j}(u)) f(\phi_{i,j}(u)),
\]
for $u\in D_i$, $f \in \mathcal{B}(D_j)$.  If $\eta_{ij}$ is defined by
\[
\eta_{ij} :=d(\phi_{i,j}(D_i), \partial D_j)>0,
\]
then the action of the transfer operator on a basis element can be estimated explicitly by
\begin{equation}\label{Lij.psi.norm}
\norm{L_{i,j}\psi_n}_{\mathcal{B}(D_i)} \le \sqrt{n+1}\> \frac{r_i}{r_j} \left(1 - \frac{\eta_{ij}}{r_j}\right)^n
\sup_{u \in D_i} \abs{V(\phi_{i,j}(u))}.
\end{equation}
Note that these bounds decay exponentially as a function of $n$, at a rate determined only by $\eta_{ij}$ and $r_j$.

By min-max, we can combine these basis element estimates into a singular value estimate,
\begin{equation}\label{Lij.minmax}
\mu_k(L_{i,j}) \le \sum_{n=k}^\infty \norm{L_{i,j}(s) \psi_n}_{\mathcal{B}(D_i)}.
\end{equation}
These component estimates can then be combined into an estimate of the singular values of the full transfer operator.
The result is an estimate
\[
\mu_k(\mathcal{L}_V) \le CM_V e^{-cn},
\]
where $c>0$ and $C$ depend only on the geometric structure of the IFS, and 
\[
M_V := \sup_{i \rightsquigarrow j} \sup_{u \in D_i} \abs{V(\phi_{i,j}(u))}.
\]
Using this estimate in \eqref{dn.sing} then gives
\[
\abs{d_n} \le C^n M_V^n n^{n/2}e^{-cn^2}.  
\]
Note that although the decay of the coefficients is always super-exponential, the convergence rate could still be extremely poor for small $n$ if $V$ is large.  

For the symmetry-reduced transfer operator 
$\mathcal{L}_V^\chi :=  \mathcal{L}_V P_\chi$, the same estimate applies, because
\[
\mu_k(\mathcal{L}_V^\chi) \le \norm{P_\chi} \mu_k(\mathcal{L}_V).
\]
(On $\mathcal{B}(D)$ the $P_\chi$ are not orthogonal projections, but of course they are still bounded operators.)  
In cases where the disks are of roughly equal sizes we'd expect 
$\norm{P_\chi} \approx 1$, so this estimate does not explain the 
observation in Section~\ref{sec:num_res} that convergence rates 
seem to be much higher in the symmetry-reduced case.

We can interpret this improved convergence as a result of 
dramatically reducing the size of the Hilbert spaces on which 
the transfer operator acts.  Let us suppose, for example, that 
the singular value bounds for each component of the transfer 
operator given in \eqref{Lij.psi.norm} and \eqref{Lij.minmax} give 
uniform bounds
\begin{equation}\label{muk.comp}
\mu_k(L_{i,j}) \le CM_V e^{-\alpha n}.
\end{equation}
For a Schottky group with 2 generators, we need to combine singular 
estimates for 12 components $L_{i,j}$ to estimate 
the singular values of $\mathcal{L}_V$ itself.  The additive Fan
 inequality (see e.g.~\cite[Theorem~A.18]{Bor07}) allows 
us to combine these estimates for the 12 components into the estimate
\[
\mu_k(\mathcal{L}_V) \le 12CM_V e^{-\alpha n/12}.
\]
In other words, whatever decay rate we achieved for components 
in \eqref{muk.comp} might be considerably degraded for the full 
transfer operator.

On the other hand, for $\mathcal{L}_V^\chi$ we obtain a basis for 
all of $\mathcal{B}^\chi$ by applying $P^\chi$ to the basis 
$\psi_n$ for a single disk.  If we assume that the disks are of 
roughly equal radii, so that $P^\chi$ is close to orthogonal, 
then we can replace the estimates \eqref{Lij.psi.norm} with an 
estimate that applies to a full basis $\{\psi_n\}$ for 
$\mathcal{B}^\chi$,
by taking the maximum over $i,j$.  Then instead of the 
component-wise estimate \eqref{muk.comp}, we would have 
an estimate for the singular values of the full transfer 
operator,
\[
\mu_k(\mathcal{L}_V^\chi) \le CM_V e^{-\alpha n},
\]
with no loss of decay rate in the exponent $\alpha$.  
Of course, this argument involves upper bounds which 
are not necessarily effective in either case.  But it perhaps 
suggests a plausible mechanism for the dramatically 
improved decay rates in the symmetry-reduced numerical 
calculations.

Another heuristic justification for the good convergence of the symmetry-reduced 
zeta function is the ``shadowing orbits'' argument made by Cvitanovic and 
Eckhardt \cite{CE89, CE93} in the setting of $3$-disk systems.
They propose that in the Taylor coefficients $d_n$ with $n\geq 2$,
the contributions
of long closed geodesics are largely canceled by the combination of shorter
geodesics. Translated to the three-funneled surface and
the case of the trivial representation $\chi=I_1$, these arguments 
can be illustrated at the following example: According to Appendix~\ref{app:num} the
pairs $\w_{(1)}=((1,5),(1,6,2,4,3,5))$ and
$\w_{(-1)}=((1,5),(1,5)(2,4)(3,6))$ are the representatives of the only
classes of primitive $G$-closed words of length $1$ and 
$\w_{(1,-1)}=((1,5,3),(1,3)(4,6))$ is a representative of the only
class of length $2$. Using \eqref{eq:truncated_symred_Selberg} we can write
\begin{equation}\label{eq:d1}
 d_1^{I_1}= -\left(T^{I_1}_{[\w_{(1)}],1}+T^{I_1}_{[\w_{(-1)}],1}\right)
\end{equation}
and
\begin{equation}\label{eq:d2}
 d_2^{I_1} = \frac{1}{2!}\left(T^{I_1}_{[\w_{(1)}],1}+T^{I_1}_{[\w_{(-1)}],1}\right)^2 -
            \left(T^{I_1}_{[\w_{(1,-1)}],1} + T^{I_1}_{[\w_{(1)}],2}+T^{I_1}_{[\w_{(-1)}],2}\right).
\end{equation}

From the definition (\ref{eq:T_chi_w_l_Schottky}) of
$T^{I_1}_{[\w_{(1)}],1}$ and the identification of closed words and 
closed geodesics in Proposition~\ref{prop:SymIFS_orbit_geodesic_equiv}
we have
\[
T^{I_1}_{\w_{(1)},k} = \frac{1}{k}\frac{\exp(-s kl_{(1)}  /m_{\w_{(1)}})}{1-\exp(-kl_{(1)}/m_{\w_{(1)}})},
\]
where $l_{(1)}$ is the length of the closed geodesic corresponding to the 
symmetry reduced word $(1)$ (see upper left part of 
Figure~\ref{fig:sym_red_geodesics}). 
Analogous expressions can be obtained also for the other  terms.
The crucial observation is that, for three funneled Schottky surfaces with
sufficiently large funnel widths, there exists a base length $\ell$ with
\[
 l_{(1)}/m_{\w_{(1)}}\approx l_{(-1)}/m_{\w_{(-1)}} \approx \mathcal \ell \tu{ and } 
 l_{(1,-1)}/m_{\w_{(1,-1)}} \approx 2\ell
\]
Indeed this approximation is well satisfied for the surfaces which we 
consider. For example for the surface $X_{3,0.5930}$ the base length
is given by $\ell=3.5$ and we have
\[
l_{(1)}/m_{\w_{(1)}} =3.530,\quad l_{(-1)}/m_{\w_{(-1)}}=3.5\quad \tu{and} \quad l_{(1,-1)}/m_{\w_{(1,-1)}} = 7.032. 
\]
Using the approximation of the lengths as well as the approximation 
$1-\exp(-k\ell)\approx 1$ we observe that the terms in (\ref{eq:d2}) cancel
each other.  More precisely, one observes that the different combinations 
of $G$-closed words of length $1$ cancel with those of length 2. 
This approximate canceling can also be observed for the higher Taylor coefficients, 
leading to very quick convergence. 

Note that for the dynamical zeta function obtained by the standard Bowen-Series maps such a 
cancellation can not be observed due to the asymmetric treatment of the 
geodesics (cf. discussion in Example~\ref{exmpl:3funnelSchottky}). Even when 
the dynamical zeta function is analytic in $z$ and the 
Taylor coefficients thus decay super-exponentially, the convergence is much 
slower in this case due to the non-optimal ordering of the geodesics. For 
non-reduced flow-adapted IFS and three funneled Schottky surfaces,
\cite[Lemma 5.6.]{wei14} implies that such a cancellation 
occurs for the coefficients of order strictly larger than 6.
Without symmetry reduction the lower coefficients do however not 
cancel completely as the symbolic dynamic is not complete and the 
remaining terms have been identified to be responsible for the structure of
the resonance chains.


\begin{thebibliography}{10}

\bibitem{Bar13}
S.~Barkhofen.
\newblock {\em Microwave Measurements on n-Disk Systems and Investigation of
  Branching in correlated Potentials and turbulent Flows}.
\newblock PhD thesis, Marburg, Philipps-Universit{\"a}t Marburg, Diss., 2013,
  2013.

\bibitem{phys_art}
S.~Barkhofen, F.~Faure, and T.~Weich.
\newblock Resonance chains in open systems, generalized zeta functions and
  clustering of the length spectrum.
\newblock {\em Nonlinearity}, 27:1829-1858, 2014.

\bibitem{BWPSKZ13}
S.~Barkhofen, T.~Weich, A.~Potzuweit, H-J. St{\"o}ckmann, U.~Kuhl, and
  M.~Zworski.
\newblock Experimental observation of the spectral gap in microwave n-disk
  systems.
\newblock {\em Physical review letters}, 110(16):164102, 2013.

\bibitem{Bor07}
D.~Borthwick.
\newblock {\em {Spectral theory of infinite-area hyperbolic surfaces.}}
\newblock {Basel: Birkh\"auser}, 2007.

\bibitem{Bor12}
D.~Borthwick.
\newblock Sharp geometric upper bounds on resonances for surfaces with
  hyperbolic ends.
\newblock {\em Analysis \& PDE}, 5(3):513--552, 2012.

\bibitem{Bor14}
D.~Borthwick.
\newblock Distribution of resonances for hyperbolic surfaces.
\newblock {\em Experimental Mathematics}, 23:25--45, 2014.

\bibitem{BGS11}
J.~Bourgain, A.~Gamburd, and P.~Sarnak.
\newblock Generalization of {S}elberg's $\frac{3}{16}$ theorem and affine
  sieve.
\newblock {\em Acta mathematica}, 207(2):255--290, 2011.

\bibitem{But98}
J.~Button.
\newblock All {F}uchsian {S}chottky groups are classical {S}chottky groups.
\newblock In {\em The Epstein birthday schrift}, pages 117--125. Geom. Topol.
  Publ., Coventry, 1998.

\bibitem{CE89}
P.~Cvitanovi{\'c} and B.~Eckhardt.
\newblock {Periodic-orbit quantization of chaotic systems}.
\newblock {\em Physical review letters}, 63(8):823--826, 1989.

\bibitem{CE93}
P.~Cvitanovic and B.~Eckhardt.
\newblock Symmetry decomposition of chaotic dynamics.
\newblock {\em Nonlinearity}, 6(2):277, 1993.

\bibitem{Gui92}
L.~Guillop{\'e}.
\newblock Fonctions z{\^e}ta de selberg et surfaces de g{\'e}om{\'e}trie finie.
\newblock {\em Adv. Stud. Pure Math}, 21:33--70, 1992.

\bibitem{GLZ04}
L.~Guillop{\'e}, K.K. Lin, and M.~Zworski.
\newblock The {S}elberg zeta function for convex co-compact {S}chottky groups.
\newblock {\em Communications in mathematical physics}, 245(1):149--176, 2004.

\bibitem{GZ95}
L.~{Guillop\'e} and M.~{Zworski}.
\newblock {Upper bounds on the number of resonances for non-compact Riemann
  surfaces.}
\newblock {\em {J. Funct. Anal.}}, 129(2):364--389, 1995.

\bibitem{GZ97}
L.~Guillop{\'e} and M.~Zworski.
\newblock Scattering asymptotics for Riemann surfaces.
\newblock {\em Annals of mathematics}, 145(3):597--660, 1997.

\bibitem{GZ99}
L.~Guillop{\'e} and M.~Zworski.
\newblock The wave trace for Riemann surfaces.
\newblock {\em Geometric \& Functional Analysis GAFA}, 9(6):1156--1168, 1999.

\bibitem{JN10}
D.~Jakobson and F.~Naud.
\newblock On the resonances of convex co-compact subgroups of arithmetic
  groups.
\newblock {\em arXiv preprint arXiv:1011.6264}, 2010.

\bibitem{JN12}
D.~Jakobson and F.~Naud.
\newblock On the critical line of convex co-compact hyperbolic surfaces.
\newblock {\em Geometric and Functional Analysis}, 22(2):352--368, 2012.

\bibitem{JP02}
O.~Jenkinson and M.~Pollicott.
\newblock Calculating {H}ausdorff dimension of {J}ulia sets and {K}leinian
  limit sets.
\newblock {\em American Journal of Mathematics}, 124(3):495--545, 2002.

\bibitem{Sci}
E.~Jones, T.~Oliphant, P.~Peterson, et~al.
\newblock {SciPy}: Open source scientific tools for {Python}, 2001--.

\bibitem{LSZ03}
W.~Lu, S.~Sridhar, and M.~Zworski.
\newblock {Fractal Weyl laws for chaotic open systems}.
\newblock {\em Physical review letters}, 91(15):154101, 2003.

\bibitem{MM87}
R.R. Mazzeo and R.B. Melrose.
\newblock Meromorphic extension of the resolvent on complete spaces with
  asymptotically constant negative curvature.
\newblock {\em Journal of Functional analysis}, 75(2):260--310, 1987.

\bibitem{Nau05}
F.~{Naud}.
\newblock {Expanding maps on Cantor sets and analytic continuation of zeta
  functions.}
\newblock {\em {Ann. Sci. \'Ec. Norm. Sup\'er. (4)}}, 38(1):116--153, 2005.

\bibitem{Nau14}
F.~Naud.
\newblock Density and location of resonances for convex co-compact hyperbolic
  surfaces.
\newblock {\em Inventiones mathematicae}, (3):723--750, 2014.

\bibitem{Non11}
S.~Nonnenmacher.
\newblock Spectral problems in open quantum chaos.
\newblock {\em Nonlinearity}, 24(12):R123, 2011.

\bibitem{Pat76}
S.J. Patterson.
\newblock The limit set of a {F}uchsian group.
\newblock {\em Acta mathematica}, 136(1):241--273, 1976.

\bibitem{Pat88}
S.J. Patterson.
\newblock On a lattice-point problem in hyperbolic space and related questions
  in spectral theory.
\newblock {\em Arkiv f{\"o}r Matematik}, 26(1):167--172, 1988.

\bibitem{PP01}
S.J. Patterson and P.A. Perry.
\newblock {The divisor of Selberg's zeta function for Kleinian groups. Appendix
  A by Charles Epstein.}
\newblock {\em Duke Math. J.}, 106(2):321--390, 2001.

\bibitem{Per03}
P.~Perry.
\newblock A poisson summation formula and lower bounds for resonances in
  hyperbolic manifolds.
\newblock {\em International Mathematics Research Notices},
  2003(34):1837--1851, 2003.

\bibitem{PWBKSZ12}
A.~Potzuweit, T.~Weich, S.~Barkhofen, U.~Kuhl, H.-J. St{\"o}ckmann, and
  M.~Zworski.
\newblock Weyl asymptotics: From closed to open systems.
\newblock {\em Physical Review E}, 86(6):066205, 2012.

\bibitem{Rue76}
D.~Ruelle.
\newblock Zeta-functions for expanding maps and {A}nosov flows.
\newblock {\em Inventiones mathematicae}, 34(3):231--242, 1976.

\bibitem{Gap3}
M.~Sch{\accent127 o}nert et~al.
\newblock {\em {GAP} -- {Groups}, {Algorithms}, and {Programming} -- version 3
  release 4 patchlevel 4}.
\newblock Rheinisch Westf{\accent127 a}lische Technische Hoch\-schule, 1997.

\bibitem{ST04}
H.~Schomerus and J.~Tworzyd{\l}o.
\newblock {Quantum-to-classical crossover of quasibound states in open quantum
  systems}.
\newblock {\em Physical review letters}, 93(15):154102, 2004.

\bibitem{Sag14}
W.\thinspace{}A. Stein et~al.
\newblock {\em {S}age {M}athematics {S}oftware ({V}ersion 6.1.1)}.
\newblock The Sage Development Team, 2014.
\newblock {\tt http://www.sagemath.org}.

\bibitem{wei14}
T.~Weich.
\newblock Resonance chains and geometric limits on schottky surfaces.
\newblock {\em arxiv:1403.7419}
\newblock (to appear in {\em Communications in mathematical physics}).

\bibitem{WBKPS14}
T.~Weich, S.~Barkhofen, U.~Kuhl, C.~Poli, and H.~Schomerus.
\newblock Formation and interaction of resonance chains in the open 3-disk
  system.
\newblock {\em New Journal of Physics}, 16:033029, 2014.

\bibitem{Zwo99}
M.~Zworski.
\newblock Dimension of the limit set and the density of resonances for convex
  co-compact hyperbolic surfaces.
\newblock {\em Inventiones mathematicae}, 136(2):353--409, 1999.

\end{thebibliography}
\end{document}